\newtheorem{theorem}{Theorem}[section]
\newtheorem{lemma}[theorem]{Lemma}
\newtheorem{proposition}[theorem]{Proposition}
\theoremstyle{definition}
\newtheorem{definition}[theorem]{Definition}
\newtheorem{example}[theorem]{Example}
\theoremstyle{remark}
\newtheorem{remark}[theorem]{Remark}
\numberwithin{equation}{section}
\def\sideremark#1{\ifvmode\leavevmode\fi\vadjust{\vbox
to0pt{\vss \hbox to 0pt{\hskip\hsize\hskip1em
\vbox{\hsize2cm\tiny\raggedright\pretolerance10000
\noindent#1\hfill}\hss}\vbox to8pt{\vfil}\vss}}}
\begin{document}
\title[ Dilations for operator-valued quantum measures]{ Dilations for operator-valued quantum measures}
\author{Deguang Han}
\address{Department of Mathematics, University of Central
Florida, Orlando, USA} \email{deguang.han@ucf.edu}
\author{Qianfeng Hu}
\address{School of Mathematical Sciences and LPMC, Nankai University, Tianjin, China} \email{qianfenghu@mail.nankai.edu.cn}
\author{David R. Larson}
\address{Department of Mathematics, Texas A\&M University, College Station, USA}
\email{larson@math.tamu.edu}
\author{Rui Liu}
\address{School of Mathematical Scicens and LPMC, Nankai University, Tianjin, China}
\email{ruiliu@nankai.edu.cn}
\begin{abstract} This paper concerns the dilations of Banach space operator-valued quantum measures. While the recently developed general dilation theory can lead to a projection (idempotent) valued dilation for any quantum measure over the projection lattice for a von Neumann algebra that dose not contain type $I_{2}$ direct summand, such a dilation does not necessarily guarantee  the preservation of countable additivity of the quantum measure.  So it remain an open question whether every countably additive $B(X)$-valued quantum measure can be dilated to a countably additive projection-valued measure.The main purpose of  this paper  is to prove that such a dilation can be constructed if  one of the following two conditions is satisfied: (i)  the underling Banach space $X = \ell_{p}$  $(1\leq p <  2$)  or  it has Schur property,  (ii) the quantum measure  has bounded $p$-variation  for some $ 1\leq p < \infty $. All of these were achieved by establishing a non-commutative version of a minimal dilation theory on the so-called elementary dilation space equipping with  an appropriate dilation norm. In particular,  the newly introduced $p$-variation norm on the elementary dilation space allows us to prove that  every operator-valued quantum measure with bounded $p$-variation has a projection-valued quantum measure dilation that preserves the boundedness of the $p$-variation.
\end{abstract}
\thanks{Deguang Han acknowledges the support from NSF under the grant DMS-1712602. Qianfeng Hu and Rui Liu acknowledge partial support by NSFC grants 11671214, 11971348 and 12071230, and Hundred Young Academic Leaders Program of Nankai University.}
\date{}

\subjclass[2010]{Primary 46G10, 47A20, 46B15, 46B28, 46B45; Secondary 47B38, 46L07.}

\maketitle

\section{Introduction}
A  recently developed general dilation theory  for operator-valued measures (respectively,  for bounded linear maps) tells us that every operator-valued measure can be dilated to a projection valued measure, and every norm-continuous linear map on a Banach algebra can be dilated to norm-continuous homomorphism  acting on a Banach space \cite{HLLL1}. However, it is generally unknown whether some other types of continuity can also be preserved via dilations. One of the most interesting cases is the preservation of the ultraweak continuity of the linear maps. In terms of  operator-valued measures, it  means preserving the countable additivity of the dilated measures.  In the commutative domain case, this question has been settled for  purely atomic measures \cite{HLLL1} and for general operator-valued measures \cite{HLL}. This paper continues this investigation  by focusing on quantum measures where the domain is the projection lattice of an arbitrary von Neumann algebra (VN algebra for short).

The quantum measure theory \cite{Gl,Ha1} has its origin in mathematical formalism of quantum mechanics and is often viewed as ``noncommutative" or ``quantum" analogs of classical measure theory.  In classical measure theory, the basic concept is a measure on the $\sigma$-algebra of subsets. While in  quantum measure theory, the measure is acting on the projection lattice of a VN algebra, or equivalently, on the lattice structure of closed subspaces in a Hilbert space. Intuitively, the transition from the $\sigma$-algebra to projection lattice includes replacing the disjointedness of subsets by orthogonality of subspaces. But from abstract viewpoint,while a $\sigma$-algebra of subsets is a Boolean algebra with respect to set theoretic operations, a projection lattice is not  necessarily Boolean.  A  typical example is  the lattice of projections in $B(H)$  which is never distributive (unless $H$ is one dimensional). Consequently,  it usually requires a substantially different  set of techniques in order to establish a non-commutative dilation theory for quantum measures.

Obviously, every bounded linear map from a VN algebra $M$ to a Banach space $X$ induces a  vector-valued quantum measure on the projection lattice $\mathcal{P}(M)$ of $M$.  However, the converse is not necessarily true since there exist counterexamples of scalar-valued measures that fail to extend to a linear functional when $M$ is the algebra of $2 \times 2 $ matrices.  Recall that a  VN algebra $M$ is said to be $\sigma$-finite if every family of non-zero pairwise orthogonal projections in $M$ is at most countable \cite{Ta}. In this paper we will be mainly focused  on the  $\sigma$-finite VN algebras without  type $I_{2}$ direct summand.  The celebrated Gleason theorem \cite{Gl} asserts that every bounded completely additive measure on the projection lattices $\mathcal{P}(H)$, where $H$ is a Hilbert space with $\dim H$ $\geq 3$,  extends uniquely to a normal functional on the algebra $B(H)$. Bunce and Wright's generalization  to vector measures \cite{BW} states that if a VN algebra  $M$ has no summand of type $I_{2}$, then every bounded finitely additive measure from $\mathcal{P}(M)$ to Banach space $X$ extends uniquely to a bounded linear operator from $M$ to $X$. In particular, for a Banach space $X$,  every finitely additive $B(X)$-valued measure extends to a bounded linear map. By the dilation theorem \cite{HLLL1} for bounded linear maps,  this linear map has bounded homomorphism dilation. Therefore, every finitely additive operator-valued measure can be dilated to a projection-valued measure.

Since ultraweak continuity (resp. countable additivity) often occurs when dealing with linear maps (resp. measures) on VN algebras, it is nature to ask whether there exists a dilation that preserves such a continuity (resp. additivity). While this question has been answered affirmatively for abelian VN algebras \cite{HLLL1, HLL},  we still don't know if this remains true for arbitrary VN algebras. In this paper   we  are able to prove  that this can be achieved when imposing additional assumptions either on $X$ or on the quantum measure. More precisely, our first main result states that  if  $X$ is either  $\ell_{p} (1\leq p < 2)$ or  has Schur property, then every ultraweakly-wot continuous bounded linear  mapping can be dilated  to a ultraweakly-wot continuous Jordan homomorphism. Our second main result involves the concept of  operator-valued quantum measures with bounded $p$-variation. This  concept was introduced in \cite{HLL}   for commutative cases, and it was proved that such a measure can be dilated to a projection-valued measure while preserving such a property. We  will generalize this concept to operator-valued measures on the projection lattice of arbitrary VN algebras, and prove that every such a measure can be dilated to a projection-valued measure with bounded total $p$-variation. Since such a measure is always countably additive, this dilation preserves the countable additivity. In summary, the following is one of the main results of this paper.

%In this paper we settle two problems

%In the dilation theory of operator-valued quantum measures, we care about the connection between the quantum measure and the associated linear map and whether continuity can be preserved. every operator-valued  quantum measure on a VN algebra without type $I_{2}$ direct summand can be dilated to a projection-valued  quantum measure.
%In the case the a  quantum measure is projection-valued, we know that it could happen that the induced map is not an homomorphism. But it is always a Jordan homomorphism.

%As a straightforward application of \textit{Generalized Gleason Theorem}, we know every bounded linear map from $M$ without without type $I_{2}$ direct summand can be dilated to a Jordan homomorphism, compared with Theorem \textbf{H} in \cite{han2014},

%Like classical measure theory, if we require a  high degree of continuity, such as countably additive or completely additive, by some Banach space techniques, we obtain:

%\begin{theorem}\label{maptojordan}
%Let $M$ be a $\sigma$-finite  VN algebra without Type $I_{2}$ direct summand, and let $X$ be $l_{p} (1\leq p < 2)$ or X has Schur property.   If $\varphi: M \rightarrow B(X)$ be a ultraweakly-wot continuous bounded linear  mapping, then there is a Banach space $Z$ such that $\varphi$ can be dilated to a ultraweakly-wot continuous Jordan homomorphism $\phi$ from $M$ to $B(Z)$.
%\end{theorem}

\begin{theorem} \label{p-dilation} Let $M$ be a $\sigma$-finite VN algebra without type $I_{2}$ direct summand and $U: \mathcal{P}(M) \to B(X)$ be a countably additive quantum  measure, where $X$ is a Banach space. Then $U$ can be dilated to  a  countably additive projection-valued quantum  measure $V: \mathcal{P}(M)\to B(Z)$ if one of the following conditions is satisfied:

(i) $X = \ell_{p}\, (1\leq p < 2)$) or  has Schur property.

(ii)  $U$ has bounded $p$-variation for some $1\leq p < \infty$.
\end{theorem}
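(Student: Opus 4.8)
The plan is to reduce both cases to a dilation statement about the linear extension of $U$ and then treat the two hypotheses with separate machinery. Since $M$ has no type $I_{2}$ summand, the Bunce--Wright theorem quoted in the Introduction produces a unique bounded linear extension $\Phi\colon M \to B(X)$ of $U$. Because $M$ is $\sigma$-finite, every orthogonal family of projections is countable, so countable additivity of $U$ coincides with complete additivity of $\Phi$ on $\mathcal{P}(M)$, which is in turn equivalent to $\Phi$ being ultraweakly-wot continuous (normal). The measure problem thus becomes: dilate the normal map $\Phi$ to a normal (Jordan) multiplicative map $\pi\colon M \to B(Z)$ with $\Phi(a) = v\,\pi(a)\,u$ for bounded $u\colon X \to Z$ and $v\colon Z \to X$. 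Setting $V(P) := \pi(P)$ for $P \in \mathcal{P}(M)$ then gives the dilation: a Jordan homomorphism sends a projection to an idempotent and sends an orthogonal pair $P,Q$ (so $PQ = QP = 0$) to idempotents $a = \pi(P)$, $b = \pi(Q)$ with $ab + ba = 0$, whence a short computation forces $ab = ba = 0$; for an honest homomorphism this is immediate since $\pi(P)\pi(Q) = \pi(PQ) = \pi(0) = 0$. Either way $V$ is genuinely idempotent-valued and additive on orthogonal projections, and normality of $\pi$ transports back to countable additivity of $V$.

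For part (i) I would invoke the first main result announced in the Introduction: when $X = \ell_{p}$ with $1 \le p < 2$ or $X$ has the Schur property, every ultraweakly-wot continuous bounded linear map on $M$ admits an ultraweakly-wot continuous Jordan homomorphism dilation. Applying this to the normal map $\Phi$ and then running the reduction of the previous paragraph produces a countably additive projection-valued $V$ at once. The Banach-space hypothesis enters precisely to guarantee that continuity survives the dilation: on Schur spaces weak sequential convergence is norm convergence, and for $\ell_{p}$ with $p < 2$ the relevant series $\sum_{n} V(P_{n})x$ converge in the dilation space, so the one thing that can fail in general—preservation of these limits—is controlled by the geometry of $X$.

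For part (ii) I would instead build the dilation by hand on an elementary (minimal) dilation space carrying a $p$-variation norm. Starting from $\Phi$, form the algebraic space $Z_{0}$ of finite sums $\sum_{i} a_{i} \otimes x_{i}$ with $a_{i} \in M$, $x_{i} \in X$, let $\pi$ act by left multiplication $\pi(a)(b \otimes x) = ab \otimes x$, and set $u(x) = 1 \otimes x$ and $v(\sum_{i} a_{i} \otimes x_{i}) = \sum_{i} \Phi(a_{i}) x_{i}$, so that $v\,\pi(a)\,u = \Phi(a)$. The essential new ingredient is to equip $Z_{0}$ with a norm modeled on the total $p$-variation of $U$, roughly the supremum over finite orthogonal refinements of expressions of the form $(\sum_{j} \|\cdot\|^{p})^{1/p}$, and to let $Z$ be the completion. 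For this norm $\pi$ is bounded and multiplicative, $V(P) = \pi(P)$ is a bounded idempotent, and—by design of the norm—$V$ inherits a finite total $p$-variation from $U$. Since bounded $p$-variation forces countable additivity, $V$ is the required countably additive projection-valued dilation.

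The main obstacle in both parts is the preservation of countable additivity, which the general dilation theory of \cite{HLLL1} does not supply, since it yields only a finitely additive (bounded-homomorphism) dilation. In part (i) this difficulty is absorbed into the cited normality-preserving dilation theorem, whose proof is exactly where the geometry of $\ell_{p}$ $(p<2)$ and of Schur spaces is used. In part (ii) the real work is to verify that the proposed $p$-variation expression is a genuine norm on $Z_{0}$ (so that $Z$ is a well-defined Banach space), that $u$, $v$, and each $\pi(a)$ are bounded for it, and—the crux—that the total $p$-variation of $V$ is dominated by that of $U$. Establishing this last inequality, namely that refining orthogonal decompositions on the dilation space does not inflate the $p$-variation beyond $\|U\|_{p}$, is the step I expect to demand the most care.
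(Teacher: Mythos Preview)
Your treatment of part~(ii) is correct and matches the paper's route: put a $p$-variation norm on the elementary dilation space, check that $S$, $T$, and each $V(P)$ are bounded, verify that $V$ has $p$-variation controlled by $\|U\|_{pV}$, and then deduce countable additivity of $V$ from its bounded $p$-variation by a contradiction argument (if $\bigl\|V(\sum_{j>M} P_j)\,y\bigr\|_{pV}$ did not tend to zero one could extract infinitely many orthogonal blocks each contributing at least $\delta^{p}$, forcing the $p$-variation to be infinite). The paper's space $\mathcal{M}_U$ is your $Z_0$ modulo the relations encoded by $\overline U$, so the constructions coincide.

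Part~(i), however, is circular as written. You reduce the measure dilation to a map dilation and then invoke ``the first main result announced in the Introduction'' (the ultraweakly--wot continuous Jordan-homomorphism dilation for $\ell_p$, $p<2$, and Schur spaces). In the paper that result (Theorem~\ref{maptojordan}) is proved \emph{as a corollary} of the measure dilation (Theorem~\ref{countablemeasuredilation}), which \emph{is} part~(i) of the theorem you are trying to establish: one first dilates $U$ to a countably additive projection-valued $V$ on the $\|\cdot\|_{\mathcal E}$-completion of $\mathcal{M}_U$, and only then extends $V$ via Gleason to obtain the Jordan homomorphism. So the theorem you cite rests on exactly the statement under proof. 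Your own remark that ``the cited normality-preserving dilation theorem, whose proof is exactly where the geometry \ldots\ is used'' concedes that the work lives there, but you give no sketch of it. What the paper actually does is: equip $\mathcal{M}_U$ with $\|\Phi\|_{\mathcal E}=\sup_{R\in B_M}\|\Phi(R)\|_X$; then countable additivity of $V$ reduces to showing $\sup_{R\in B_M}\bigl\|\overline U\bigl(R\,(\sum_{j>M} P_j)\,Q\bigr)x\bigr\|_X \to 0$. For Schur spaces this follows at once from weak-to-norm sequential convergence; for $\ell_p$ with $p<2$ one argues by contradiction, using a gliding-hump to produce a seminormalized weakly null sequence equivalent to the unit vector basis of $\ell_p$, while an $\ell_2$-estimate together with Orlicz--Pettis shows the same sequence must absorb arbitrary $\ell_2$ coefficients---impossible for $p<2$. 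Without at least this outline, part~(i) has no proof.
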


%\vspace{3mm}

\noindent{\bf Outline of the paper:}   We will recall or introduce in section 2 some  necessary definitions and notations about quantum measures.   The general dilation theory for operator-valued measures over the  classical commutative domain case was mainly based on an elementary dilation theory developed in \cite{HLL, HLLL1, HLLL2}.
In order to prove our main results, we need  an analogous of such a theory for quantum measures.  So, in section 3,  we  will first  introduce an elementary dilation space. While the projection-valued dilation of a quantum measure is not unique, we prove that every such a dilation  naturally induces a projection-valued dilation on the elementary space. The proof of the first part of Theorem \ref{p-dilation} is then achieved by  introducing an appropriate dilation norm on this elementary space. The main results  of this section are presented in Theorem \ref{thm3.1}, Proposition \ref{dilationnorm}, Theorem \ref{countablemeasuredilation} and Theorem \ref{maptojordan}.
Section 4 is devoted to establishing a dilation theory for quantum measures with bounded $p$-variations.  We first introduce the concept of $p$-variation for quantum measures, and  show that the definition of bounded $p$-variation  coincides with the one defined for operator valued measures in the commutative domain (i.e., abelian VN algebra) case. Then we introduce a $p$-variation norm on the elementary dilation space, and prove that every quantum measure with bounded $p$-variation can be dilated to a same type of projection-valued measure on the elementary dilation space equipped with the $p$-variation norm. This proves the second part of Theorem \ref{p-dilation}.

\section{Preliminaries}

%In this section we introduce some necessary notations and make  some preparatory observations that will be needed for the rest of the paper.

Let $X$ be a Banach space, and $M$ be a VN algebra. We will use the following list of notations throughout the rest of this paper.

\begin{itemize}

\item $B(X)$ -- the space of all bounded linear operators on $X$.
\item  $B_{M}$ --  the closed unit ball of $M$; $M_{sa}$ --  the set of self-adjoint operators in $M$;   $M_{+}$ -- the set of all positive operators in $M$, $I$ -- the identity operator.

\item $\mathcal{P}(M)$ denotes the set of all projections in $M$,  that is the set of all self-adjoint idempotents. $\mathcal{P}(M)$ is ordered by order relation $P \leq Q$ if $PQ=P.$

\item Supremum and infimum of two projections $P$ and $Q$ will be denoted by $P \vee Q$ and $P \wedge Q,$ respectively. We use $P \perp Q $  to denote that  projections $P$ and $Q$ are orthogonal.

\item $M_{*}$ is  the predual space of $M$, which is the Banach space of all ultraweakly continuous (\textit{normal}) linear functionals on $M$.

\end{itemize}

For a mutually orthogonal family of  projections $\{P_{\alpha}\}_{\alpha\in \mathbb{I}}$,  we will also use $\sum_{\alpha\in \mathbb{I}}P_{\alpha}$ to denote  $\vee_{\alpha\in \mathbb{I}}P_{\alpha}$ ($\mathbb{I}$ is the  index set and the convergence is in strong operator topology).  For operators $\{ A_{i}\}_{i\in \mathbb{I}}\subset  M,$  we use the notation  $ A_{i} \stackrel{sot}{\longrightarrow}{A}$ ($ A_{i} \stackrel{wot}{\longrightarrow}{A}$) for the strong operator topology (weak operator topology, respectively) convergence.  As ultraweak topology and weak* topology coincide on $M$, we will use $ A_{i} \stackrel{w*}{\longrightarrow}{A}$ to denote that $A_{i}$ converges to $A$ in ultraweak topology of $M$.  We refer to \cite{Ta, Ha2, PX, Ha1} for more backgrounds about operator algebras and noncommutative quantum measure theory, and \cite{HLLL1, HLL, Pa}  about the dilation theory of operator-valued measures. Analogous to \cite{BW}, we will use the following definition for operator-valued quantum measure.

\begin{definition}\label{quantummeasure}
Let $\mathcal{P}(M)$ be the lattice of projections of a VN algebra $M$ and $X$ be a Banach space.  If
$$
U : \mathcal{P}(M)\to B(X)
$$
is a mapping such that
$$
U(P_{1}+P_{2})=U(P_{1})+U(P_{2}), \text{ whenever }  P_{1}\perp P_{2},
$$
then $U$ is said to be  a (finitely additive) $B(X)$-valued measure on $\mathcal{P}(M)$. We say that $U$ is

\begin{itemize}

\item[(i)] \textit{countably additive (or $\sigma$-additive)} if  $U(\sum_{n}P_{n})=\sum_{n}U(P_{n})$ for  any countable family $P_{n}$ of mutually orthogonal projections and  converges unconditionally with respect to weak operator topology ($wot$) of $B(X).$

\item[(ii)] \textit{bounded} if $\sup\{\|U(P)\|: P \in \mathcal{P}(M)\}<\infty $ and the supremum is called the norm of $U$ and denoted as $\|U\|$.

\item[(iii)] \textit{projection-valued} if  $U(P)$ is a projection (i.e. idempotent) on $X$ for every $P \in \mathcal{P}(M) $.

\item[(iv)] \textit{self-adjoint} if $X$ is a Hilbert space $\mathcal{H}$ and $U(P)^{*}=U(P) $ for all $P \in \mathcal{P}(M) $ and \textit{positive} if $ U(P) \in B(\mathcal{H})_{+}$ for all $P \in \mathcal{P}(M) $.
\end{itemize}

\end{definition}

% \textit{completely additive} if  $U\big(\sum_{\alpha } P_{\alpha}\big)=\sum_{\alpha} U\left(p_{\alpha}\right)$ for every mutually orthogonal family of  projections $\left\{P_{\alpha}\right\}$, where the convergence of the series is unconditionally in the weak operator topology $ (wot) $ of $B(X).$
%
% meaning that $\sum_{\alpha} U\left(p_{\alpha}\right)$ converges unconditionally to $U\big(\sum_{\alpha } P_{\alpha}\big)$ in wot of $B(X)$ for every mutually orthogonal family of  projections $\left\{P_{\alpha}\right\}$,

%\textcolor{blue}{Guys: If we assume  $\sigma$-finiteness of $M$, then $(i)$ and $(ii)$ are the same. So do we need to list both of them?}

\begin{remark} \label{Remarkondef} We have a few comments about the above definition.

\begin{itemize}

\item[(i)]  When dealing with the operator-valued measure $U$ on a $\sigma$-finite VN algebra, countable additivity of $U$ is equivalent to being completely additive \cite{Ta}.  Moreover  completely additive or countably additive bounded linear maps from $M$ to $B(X)$ are defined in a similar way.

\item[(ii)] The boundedness of the countably additive operator-valued measure on the classical measure space is an essential property \cite{HLLL1} in the study of operator-valued measures. However,  the quantum measures on the projection lattice are not necessarily bounded. We mention that Dorofeev \cite[Theorem 2]{Do} establishes some boundedness theorems for quantum measures. In what follows we  will always assume that a quantum measure is finitely additive and bounded. If countable additivity or complete additivity is assumed, it will be explicitly stated.

\item[(iii)]  If we assume $U$ is a countably additive quantum measure, that is,  for any  countable collection of mutually orthogonal projections $\{P_{n}\}_{n=1}^{\infty}$ of $\mathcal{P}(M)$ with supremum $P$, then $x^{*}(\sum_{n=1}^{\infty}U(P_{n})x)$ converges to $x^{*}(U(P)x)$  for all $x \in X $ and $x^{*} \in X^{*}$,  i.e.,  $\sum_{n=1}^{\infty}U(P_{n})x $ weakly unconditional converges to $ U(P)x$. The Orlicz-Pettis theorem states that weakly subseries convergence and norm subseries convergence of a series are the same in every Banach space.  Furthermore, since $\mathcal{P}(M)$ is a complete lattice, we obtain that  weakly countably additive vector measures are countably additive. Thus, the Definition \ref{quantummeasure} for countably additive quantum measure is equivalent to say  that $ U $ is \textit{strong countably additive} on $X$, that is,
$$
\sum_{n=1}^{\infty}U(P_{n})x= U(P)x ,\quad \forall x\in X.
$$
\end{itemize}
\end{remark}

%Next we define the ultraweak topology of the bounded operator space, which allows us to discuss the continuity of the mapping from VN algebras  to the bounded operator spaces.

The ultraweak-topology on $B(\mathcal{H})$ for a Hilbert space $\mathcal{H}$ is well-understood. For a Banach space $X$, we can define the ultraweak-topology on $B(X)$ through the natural embedding $B(X) \hookrightarrow B\left(X, X^{* *}\right)$ and tensor products: Let $X \otimes Y$ be the tensor product of the Banach space $X$ and $Y$. The projective norm on $X \otimes Y$ is defined by:
$$
\|u\|_{\wedge}=\inf \left\{\sum_{i=1}^{n}\left\|x_{i}\right\|\left\|y_{i}\right\|: u=\sum_{i=1}^{n} x_{i} \otimes y_{i}\right\}
$$
We will use $X \otimes_{\wedge} Y$ to denote the tensor product $X \otimes Y$ endowed with the projective norm $\|\cdot\|_{\wedge} .$ Its completion will be denoted by $X \widehat{\otimes} Y$. For any Banach spaces $X$ and $Y$, we have the identification:
$$
(X \widehat{\otimes} Y)^{*}=B\left(X, Y^{*}\right)
$$
Thus $B\left(X, X^{* *}\right)=\left(X \widehat{\otimes} X^{*}\right)^{*}$. Viewing $X \subset X^{* *},$ we define the ultraweak topology on $B(X)$ to be the weak* topology induced by the predual $X \widehat{\otimes} X^{*}$ by the following:
$T_{\alpha} \rightarrow T$ in the ultraweak topology if $F\left(T_{\alpha}\right) \rightarrow F(T)$ for any $F(A)=\sum_{n=1}^{\infty} f_{n}\left(A x_{n}\right)$
with $\sum_{n=1}^{\infty}\left\|f_{n}\right\|\left\|x_{n}\right\|<\infty .$ From the definition it is obvious that if $\sup \left\{\left\|T_{i}\right\|: i \in I\right\}$
is bounded, then $\left\{T_{i}\right\}$ is ultraweakly convergent to $T$ if and only if it is convergent to $T$ in the weak operator topology. We will usually use the term \textit{normal} to denote an ultraweakly continuous linear map.
\vspace{2mm}

Notation: $J$ is the canonical embedding from $B(X)$ into $B\left(X, X^{**}\right)=(X \widehat{\otimes} X^*)^*$.
The following lemma  is standard and will  be used in  the proof of  Proposition \ref{continuity}.

\begin{lemma}\cite{Ta}\label{mormal}
If $\rho$ is a bounded linear functional on a VN algebra $M$, then $\rho$ is normal if and only if $\rho$ is completely additive.
\end{lemma}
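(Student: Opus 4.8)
The statement packages two implications, which I would prove separately, taking \emph{normal} to mean ultraweakly (equivalently $\sigma$-weakly) continuous.

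\emph{Normal implies completely additive.} This is the routine direction. Let $\{P_\alpha\}_{\alpha\in\mathbb{I}}$ be a family of mutually orthogonal projections with supremum $P=\bigvee_\alpha P_\alpha$. The finite partial sums $s_F=\sum_{\alpha\in F}P_\alpha$, indexed by the finite subsets $F\subseteq\mathbb{I}$, form an increasing net of projections bounded above by $I$ with $\sup_F s_F=P$, so $s_F\to P$ in the strong operator topology. Since the net is bounded, its SOT-convergence coincides with ultraweak convergence, and the ultraweak continuity of $\rho$ then gives $\rho(s_F)\to\rho(P)$; that is, $\sum_{\alpha}\rho(P_\alpha)=\rho(P)$ with unconditional (net) convergence. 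Hence $\rho$ is completely additive.

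\emph{Completely additive implies normal.} This is the substantial direction, and I would follow the classical route. First I reduce to the positive case: writing $\rho$ as a combination of positive functionals via its real and imaginary parts and their Jordan decompositions, each of which is again completely additive, it suffices to prove that a positive completely additive $\phi$ is normal. Fix a representation $M\subseteq B(\cH)$. The engine is the elementary fact that a positive functional dominated by a normal one is normal: if $0\le\psi\le\omega$ with $\omega$ normal, then for a bounded increasing net $x_\lambda\uparrow x$ one has $0\le\psi(x-x_\lambda)\le\omega(x-x_\lambda)\to0$, so $\psi$ is normal. The crux is therefore a comparison lemma: for any nonzero positive completely additive $\phi$ there exist a nonzero projection $e\in M$ and a vector $\xi\in\cH$ with $\phi(x)\le\langle x\xi,\xi\rangle$ for all positive $x\in eMe$. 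Consequently $\phi$ restricted to the corner $eMe$ is dominated by the normal vector functional $\omega_\xi$, hence is normal; by the Radon--Nikodym theorem for normal functionals one may further shrink $e$ so that $\phi$ actually coincides with a vector functional $\omega_{\xi}$ on $eMe$, with $\xi=e\xi$.

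Granting the comparison lemma, I would finish by a Zorn/exhaustion argument. Consider maximal families of mutually orthogonal projections $\{e_i\}\subseteq M$ on each of whose corners $\phi$ coincides with a vector functional $\omega_{\xi_i}$, $\xi_i=e_i\xi_i$. If $e=\sum_i e_i\neq I$, then on the complementary corner $(I-e)M(I-e)$ the functional $\phi$ is either zero or, by the comparison lemma applied there, admits a further nonzero ``vector corner,'' contradicting maximality; hence $\sum_i e_i=I$. This is exactly where complete additivity is indispensable: it yields $\sum_i\|\xi_i\|^2=\sum_i\phi(e_i)=\phi(I)<\infty$, so that $\phi(x)=\sum_i\langle x\xi_i,\xi_i\rangle$ defines a genuine element of the predual $M_*$, i.e. a normal functional, a standard Cauchy--Schwarz estimate controlling the off-diagonal blocks and identifying $\phi$ with this sum. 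The main obstacle is the comparison lemma itself: since $\phi$ is only assumed completely additive, one cannot directly perform a Hahn decomposition of $\omega_\eta-\phi$ inside $M$ (the relevant support projection a priori lives only in $M^{**}$), so producing the dominating vector on a corner must genuinely exploit complete additivity to exclude singular behaviour. This is the technical heart of Takesaki's proof \cite{Ta}.
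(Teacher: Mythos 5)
The paper does not prove this lemma at all --- it is quoted from \cite{Ta} --- so your sketch can only be judged on its own terms. Your first direction is correct and complete. In the converse direction, the overall architecture (comparison with a vector functional on a corner, Radon--Nikodym, Zorn exhaustion, with complete additivity yielding $\sum_i\|\xi_i\|^2=\phi(I)<\infty$) is indeed the classical Takesaki/Kadison--Ringrose proof for \emph{positive} functionals. The genuine gap is your opening reduction: you claim that the Jordan parts of (the hermitian parts of) a completely additive bounded functional are ``again completely additive.'' Passing to real and imaginary parts is fine, since unconditional convergence of scalars is absolute; but the Jordan decomposition $\rho=\rho_+-\rho_-$ is implemented by a symmetry living in the enveloping algebra $M^{**}$, not in $M$, and there is no direct way to transfer additivity over $\mathcal{P}(M)$ to $\rho_{\pm}$. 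The claim is true only \emph{a posteriori} --- once the theorem is proved, $\rho$ is normal, hence $\rho_{\pm}$ are normal, hence completely additive --- so invoking it as the first step is circular. This is precisely why the textbook theorem is stated and proved for positive functionals, and the general bounded case requires a different reduction.

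The standard repair: after establishing the positive case as you outline, take $\rho$ hermitian and completely additive and split it along $M^*=M_*\oplus M_*^{\perp}$ as $\rho=\rho_n+\rho_s$ with $\rho_n$ normal and $\rho_s$ singular (Takesaki's decomposition). By your easy direction $\rho_n$ is completely additive, hence so is $\rho_s$. Jordan decomposition \emph{does} cooperate for $\rho_s$: its parts $\varphi_{\pm}$ are again singular (the decomposition takes place in the singular central summand of $M^{**}$), and $\varphi=\varphi_++\varphi_-$ is positive singular, so every nonzero projection of $M$ dominates a nonzero projection annihilated by $\varphi$. Given any $P\in\mathcal{P}(M)$, choose a maximal orthogonal family $\{g_j\}$ of subprojections of $P$ with $\varphi(g_j)=0$; maximality forces $\sum_j g_j=P$, and complete additivity of $\rho_s$ together with $|\rho_s(g_j)|\le\varphi(g_j)=0$ gives $\rho_s(P)=0$. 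Since the span of projections is norm dense in $M$, $\rho_s=0$ and $\rho=\rho_n$ is normal. Finally, note that even in the positive case you leave the comparison lemma --- which you rightly identify as the technical heart --- unproved; relative to this paper, which cites the entire statement, that deferral is defensible, but your proposal should be read as a correct roadmap with the above circularity repaired, not as a self-contained proof.
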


%Specializing to a $\sigma$-finite VN algebra $\mathcal{A}$, we immediately have that if $\omega$ is a bounded linear functional on $\mathcal{A}$, then $\omega$ is normal if and only if $\omega$ is countably additive.
%The following generalizes Proposition 2.4 in \cite{HLL} to arbitrary VN algebras.

\begin{proposition}\label{continuity}
Let $\Phi$ be a bounded linear map from a VN algebra $M$ to $B(X)$.  Then the following  are equivalent.
\begin{enumerate}
	\item [(1)] $\Phi$ is completely additive.
	\item [(2)] $\Phi$ is ultraweakly-wot continuous.
	\item [(3)] $J \Phi$ is normal.
\end{enumerate}
\end{proposition}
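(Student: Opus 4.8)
The plan is to reduce all three statements to a single family of scalar functionals on $M$. For $x \in X$ and $f \in X^*$ define the bounded linear functional $\rho_{x,f} \in M^*$ by $\rho_{x,f}(A) = f(\Phi(A)x)$, and note that $\|\rho_{x,f}\| \le \|\Phi\|\,\|x\|\,\|f\|$. I will show that each of (1), (2), (3) is equivalent to the single assertion that every $\rho_{x,f}$ is normal (equivalently, lies in the predual $M_*$), from which the proposition follows at once.

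First I would treat (1) $\Leftrightarrow$ ``every $\rho_{x,f}$ is normal.'' Unravelling the definition of $wot$-convergence, $\Phi$ is completely additive precisely when, for every orthogonal family $\{P_\alpha\}$ with supremum $P$, one has $f(\Phi(P)x) = \sum_\alpha f(\Phi(P_\alpha)x)$ for all $x \in X$ and $f\in X^*$; that is, exactly when each $\rho_{x,f}$ is completely additive as a scalar functional on $\mathcal{P}(M)$. Invoking Lemma \ref{mormal}, complete additivity of the bounded functional $\rho_{x,f}$ is equivalent to its normality. Hence (1) holds iff every $\rho_{x,f}$ is normal.

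Next, (2) $\Leftrightarrow$ ``every $\rho_{x,f}$ is normal.'' The $wot$ topology on $B(X)$ is the initial topology generated by the functionals $T \mapsto f(Tx)$; therefore $\Phi$ is ultraweakly-$wot$ continuous iff each composition $A \mapsto f(\Phi(A)x) = \rho_{x,f}(A)$ is continuous from $(M, \text{ultraweak})$ to the scalars. Since the ultraweak topology on $M$ is $\sigma(M, M_*)$, a bounded functional is ultraweakly continuous iff it belongs to $M_*$, i.e.\ iff it is normal, and this gives the equivalence.

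Finally, (3) $\Leftrightarrow$ ``every $\rho_{x,f}$ is normal,'' which I expect to be the main technical point. Viewing $J\Phi$ as a map into $(X\widehat\otimes X^*)^*$, normality of $J\Phi$ means that $A \mapsto \langle J\Phi(A), u\rangle$ is $\sigma(M, M_*)$-continuous for every $u$ in the predual $X\widehat\otimes X^*$. Writing $u = \sum_n x_n\otimes f_n$ with $\sum_n \|x_n\|\,\|f_n\| < \infty$, this functional equals $\rho_u := \sum_n \rho_{x_n, f_n}$, so taking simple tensors $u = x\otimes f$ shows that normality of $J\Phi$ forces each $\rho_{x,f}$ to be normal. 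For the converse---the crux---I would use that $\sum_n \|\rho_{x_n,f_n}\| \le \|\Phi\| \sum_n \|x_n\|\,\|f_n\| < \infty$, so the series defining $\rho_u$ converges in the norm of $M^*$; since each $\rho_{x_n,f_n}$ lies in the norm-closed subspace $M_* \subseteq M^*$ and $M_*$ is closed, the limit $\rho_u$ is again normal. Thus normality of all the $\rho_{x,f}$ yields normality of $J\Phi$, closing the chain of equivalences and hence proving the proposition.
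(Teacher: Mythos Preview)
Your proposal is correct and follows essentially the same approach as the paper: both reduce to the scalar functionals $\rho_{x,f}(A)=f(\Phi(A)x)$, use Lemma~\ref{mormal} for the equivalence with complete additivity, and handle the nontrivial implication for (3) by writing $u=\sum_n x_n\otimes f_n$ and using $\sum_n\|\rho_{x_n,f_n}\|\le\|\Phi\|\sum_n\|x_n\|\|f_n\|<\infty$ together with the norm-closedness of $M_*$ in $M^*$. Your organization (showing all three are equivalent to a single common condition) is slightly tidier than the paper's cycle $(1)\Leftrightarrow(2)$, $(2)\Rightarrow(3)$, $(3)\Rightarrow(2)$, but the content is the same.
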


\begin{proof}
The equivalence $(1)\Leftrightarrow (2)$ is clear by Lemma \ref{mormal}, for any $x \in X$ and $x^*\in X^*$, the bounded linear functional $x^*(\Phi(\cdot)x)$ on $M$ is completely additive if and only if $x^*(\Phi(\cdot)x)$ is normal.

$(2)\Rightarrow (3)$ It is sufficient to prove that, for any $ u=\sum_{k=1}^{\infty}x_{k}\otimes x_{k}^{*}\in X \widehat{\otimes} X^{*}$ with $\sum_{k=1}^{\infty}\|x_{k}\|\|x_{k}^{*}\|< +\infty $, $(J\Phi(\cdot))(u)$ is a normal functional on $M$.
Let $\phi_{k}=x_{k}^{*}(\Phi(\cdot)x_{k})$ for each $k\in\mathbb{N}$. Since $\Phi$ is  ultraweakly-wot continuous, then $\phi_{k}\in M_*$ is  normal. Besides
$$
\|\phi_{k}\| \leq \| \Phi \|\|x_{k}\|\|x_{k}^{*}\|,
\quad \sum_{k=1}^{\infty}\|\phi_{k}\| \leq \sum_{k=1}^{\infty} \|\Phi\|\|x_{k}\|\|x_{k}^{*}\|<+\infty.
$$
then we have $\sum_{k=1}^{\infty}\phi_{k}\in M_{*}$ is normal. Meanwhile
$$
\sum_{k=1}^{\infty}\phi_{k}=\sum_{k=1}^{\infty}x_{k}^{*}(\Phi(\cdot)x_{k})=\sum_{k=1}^{\infty} (J\Phi(\cdot))(x_k\otimes x_k^*)
=(J\Phi(\cdot))\Big(\sum_{k=1}^{\infty}x_{k}\otimes x_{k}^{*}\Big)=(J\Phi(\cdot))(u).
$$
Thus, $(J\Phi(\cdot))(u)$ is a normal functional on $M$.

$(3)\Rightarrow (2)$ is obvious,
since $x^*(\Phi(\cdot))x=(J\Phi(\cdot))(x\otimes x^*)$ is normal for any $x\in X$ and $x^*\in X^*$.
 %$J\Phi$ is normal which means if $T_{\alpha} \stackrel{w*}{\longrightarrow}T $ , then $J\Phi(T_{\alpha}) \stackrel{w*}{\longrightarrow}  J\Phi(T)$  i.e., if we take $ u\in X \widehat{\otimes} X^{*}, $  then
%$$
%J\Phi(T_{\alpha})(u)\to J\Phi(T)(u).
%$$
%Particularly we take $u=x\otimes f $ where $x\in X$ and $f\in X^*$, then
%$$
%J\Phi(T_{\alpha})(u)=J\Phi(T_{\alpha})(x\otimes f)=
%f(\Phi(T_{\alpha})x)\to  f(\Phi(T)x).
%$$
%It follows easily  that $\Phi$ is ultraweakly-wot continuous.
Thus, the proof is completed.
\end{proof}

\section{Elementary dilation space and dilation of quantum measures}

%After reviewing the basic notations, we turn to the elementary dilation theory of quantum measure.

\begin{definition}\label{dilation1}
Let $U$ be a $B(X)$-valued quantum measure on a VN algebra $M$. If there exist a Banach space $Y$, bounded linear maps $S, T$ and a quantum measure $V: \mathcal{P}(M) \rightarrow B(Y)$ such that
$$
U(P)=S V(P) T
$$
then we say that $V$ is a dilation of $U$ and $Y$ is a dilation space.  We call $V$ a projection-valued dilation if $V(P)$ is an  idempotent on $Y$ for every $P \in \mathcal{P}(M)$.
\end{definition}

%By a \textit{Hilbertian dilation} or Hilbert space dilation we always mean that $Y$ is a Hilbert space. In this case we say that $V$ is an orthogonal projection-valued dilation if we require that each $V(P)$ is an orthogonal projection.

We remark that the dilation is not unique. In this paper we will be focused on the ``elementary dilation space" which was introduced for the commutative domain case in \cite{HLLL1}.
In order to construct such a dilation space, we will need the following lemma.

\begin{lemma}\cite{BW}\label{Gleasonthm}
Let $M$ be a VN algebra with no direct summand of type $I_{2}$  and let $X$ be a Banach space. Then each bounded, finitely additive $X$-valued measure $\mu$ on $\mathcal{P}(M)$ extends uniquely to a bounded linear operator $T$ from $M$ to $X$ with $\|\mu\|\le\|T\|\le 4\|\mu\|$.
\end{lemma}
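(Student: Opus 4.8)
The plan is to reduce the vector-valued statement to its scalar counterpart, the Mackey--Gleason theorem, and then reassemble. The scalar case --- every bounded finitely additive complex measure on $\mathcal{P}(M)$ extends uniquely to a bounded linear functional on $M$ when $M$ has no type $I_2$ summand --- is the genuine analytic content (the culmination of Gleason's theorem together with the extensions to general von Neumann algebras), and I would invoke it rather than reprove it. So I begin by fixing the bounded finitely additive measure $\mu\colon\mathcal{P}(M)\to X$ and noting that for each $x^*\in X^*$ the composition $x^*\circ\mu$ is a bounded finitely additive scalar measure with $\|x^*\circ\mu\|\le\|x^*\|\,\|\mu\|$.

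Applying the scalar theorem to each $x^*\circ\mu$ produces a unique $\psi_{x^*}\in M^*$ with $\psi_{x^*}(P)=x^*(\mu(P))$ for every $P\in\mathcal{P}(M)$. Uniqueness of this extension forces $x^*\mapsto\psi_{x^*}$ to be linear, and I would check that it is bounded either from the norm estimate in the scalar theorem or, more cheaply, by the closed graph theorem (if $x^*_n\to 0$ and $\psi_{x^*_n}\to\phi$, then $\phi(P)=\lim_n x^*_n(\mu(P))=0$ on every projection, so $\phi=0$). This yields a bounded operator $\Psi\colon X^*\to M^*$. Dualizing and restricting to $M\subset M^{**}$ gives a bounded operator $T=\Psi^*|_M\colon M\to X^{**}$ characterized by $\langle x^*,T(a)\rangle=\psi_{x^*}(a)$, and in particular $T(P)=\mu(P)\in X$ for each projection $P$.

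Two things remain. First, to see that $T$ lands in $X$ and not merely in $X^{**}$, I would use that the complex-linear span of $\mathcal{P}(M)$ is norm dense in $M$ (by the spectral theorem, every self-adjoint contraction is a norm limit of real combinations of its spectral projections); since $T$ is continuous and maps this dense span into the closed subspace $X$, it maps all of $M$ into $X$. Second, for the norm bounds, the lower estimate $\|\mu\|=\sup_P\|T(P)\|\le\|T\|$ is immediate because projections are contractions, while for the upper estimate I would decompose an arbitrary $a\in B_M$ as $a=(a_1-a_2)+i(a_3-a_4)$ with each $a_j$ a positive contraction, and bound $\|T(a_j)\|\le\|\mu\|$ by writing the positive contraction as a norm limit of convex combinations of projections; summing the four pieces produces the factor $4$. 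Uniqueness of the extension is clear, since any two bounded linear maps agreeing on $\mathcal{P}(M)$ agree on its dense span.

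The main obstacle is concentrated entirely in the scalar Mackey--Gleason theorem invoked at the outset: the passage from mere finite additivity on the non-distributive lattice $\mathcal{P}(M)$ to full linearity is the deep part, and it is precisely there that excluding a type $I_2$ summand is indispensable (the classical counterexamples live on the $2\times 2$ matrices). Granting that input, the vector-valued assembly, the range argument, and the tracking of the constant $4$ are all routine.
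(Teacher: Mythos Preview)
Your argument is correct but follows a different route from the one the paper sketches (in the remark immediately after the lemma, which summarizes Bunce and Wright's original approach). The paper defines $T$ directly on the algebraic span $V(M)$ of the projections by $T(\sum_i \lambda_i P_i)=\sum_i \lambda_i \mu(P_i)$ and then asserts that this is well-defined with $\|T\|\le 4\|\mu\|$; the hard work is entirely in the well-definedness claim, which is precisely the Bunce--Wright theorem being cited. You instead dualize: compose with each $x^*\in X^*$, invoke the scalar Mackey--Gleason theorem to obtain $\psi_{x^*}\in M^*$, assemble these into a bounded map $\Psi\colon X^*\to M^*$ via closed graph, and then restrict the adjoint to $M$. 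Your route has the advantage of making the reduction to the scalar case completely transparent and isolating the genuinely deep input; the direct route avoids the detour through $X^{**}$ and the density argument needed to bring the range back into $X$. Both approaches ultimately rest on the same non-trivial ingredient (the scalar extension theorem, where the no-$I_2$ hypothesis enters), and both recover the constant $4$ from the same four-term decomposition of a contraction into positive contractions.
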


\begin{remark}
In fact, the above Lemma \ref{Gleasonthm} is Bunce and Wright's  generalization of  original Gleason Theorem \cite{Gl}  for vector-valued measure version, called  the \textit{Generalized Gleason Theorem}. The proof strategy is to consider extending $\mu$ to a bounded map on the linear span $V(M)$ of $\mathcal{P}(M)$. Suppose that $ x=\sum_{i=1}^{n}\lambda_{i}P_{i} $ is a finite linear combination of projections $ \{P_{i}\}_{i=1}^{n}$  in $\mathcal{P}(M)$. Define a map $T: V(M) \rightarrow X$ by setting
$$
T\Big(\sum_{i=1}^{n} \lambda_{i} p_{i}\Big)=\sum_{i=1}^{n} \lambda_{i} \mu(p_{i}).
$$
It can be verified that  $T$ is well-defined  and the value depends only on $x$, and $\|T\| \leq 4\|\mu\|$. %By the density of linear span of projections in $M$.
This allows us to extend $T$ uniquely to a bounded operator from $M$ into $X$.  Furthermore,  if complete additivity of $\mu$ is additionally assumed, then we have that  the extended mapping  $T$ is normal.
\end{remark}

With Lemma \ref{Gleasonthm} in hand,  now we  outline the algebraic structure of our dilation space.
Let $X$ be a Banach space and $M$ be a VN algebra without type $I_{2}$ direct summand, $\mathcal{P}(M)$ is  the projection lattice of $M$.  We use the symbol $\mathcal{L}(M, X)$ to denote the linear space of all vector-valued linear maps from $M$ to $X.$ Let $U$ be a $B(X)$-valued quantum measure on $\mathcal{P}(M).$ By the generalized Gleason Theorem, $U$ extends uniquely to a bounded linear operator $\overline{U}$ from $M$ to $B(X)$. For any $Q \in B_{M} $ and $x \in X,$ define
$$
\overline{U}_{Q, x}: M \rightarrow X, \quad \overline{U}_{Q, x}(R)=\overline{U}(RQ)x, ~~ \forall ~ R \in M.
$$
Then it is easy to see that $\overline{U}_{Q, x}$ is an $X$-valued linear map on $ M,$ that is, $\overline{U}_{Q, x} \in \mathcal{L}(M, X).$

Let $\mathcal{M}_{U}=\operatorname{span}\left\{\overline{U}_{Q, x}: Q \in B_{M}, x \in X \right\} \subset \mathcal{L}(M, X)$.
We will refer $\mathcal{M}_{U}$ as \textit{the elementary dilation space} induced by $U$.  Now, we have the following fundamental linear mappings from the algebraic structure. Define
\begin{equation}\label{equS}
S: \mathcal{M}_{U} \rightarrow X, \quad S(\Phi)=\Phi(I),  \quad  \forall ~  \Phi \in \mathcal{M}_{U}. \\
\end{equation}
\begin{equation}\label{equT}
T: X \rightarrow \mathcal{M}_{U}, \quad T(x)=\overline{U}_{I, x}, \quad  \forall ~  x\in X.
\end{equation}
and let  $\{C_{i}\}_{i=1}^{N}\subset \mathbb{C}, \{Q_{i}\}_{i=1}^{N}\subset B_{M}, \{x_{i}\}_{i=1}^{N}\subset X$  and $\sum_{i=1}^{N}C_{i}\overline{U}_{Q_{i},x_{i}} \in \mathcal{M}_{U}$, define
\begin{equation}\label{equV}
 V: \mathcal{P}(M) \to \mathcal{L}(\mathcal{M}_{U}), \quad
V(P)\Big(\sum_{i=1}^{N}C_{i}\overline{U}_{Q_{i},x_{i}}\Big)
=\sum_{i=1}^{N}C_{i}\overline{U}_{PQ_{i},x_{i}} ~\forall P \in  \mathcal{P}(M).
\end{equation}
where $\mathcal{L}(\mathcal{M}_{U})$ denotes the space of all  linear maps on $\mathcal{M}_{U}$. Actually for any $\Phi \in \mathcal{M}_{U}$,
$$
\Big( V(P)(\Phi)\Big)(R)=\Phi(RP), \forall ~ R \in M
$$
which is independent of representations of $\Phi$, thus $V(P)$ is well-defined in $\mathcal{L}(\mathcal{M}_{U}).$  Moreover
$$
V(P)V(P)\Big(\sum_{i=1}^{N}C_{i}\overline{U}_{Q_{i},x_{i}}\Big)
=\sum_{i=1}^{N}C_{i}\overline{U}_{P\cdot PQ_{i},x_{i}}
=\sum_{i=1}^{N}C_{i}\overline{U}_{PQ_{i},x_{i}}
=V(P)\Big(\sum_{i=1}^{N}C_{i}\overline{U}_{Q_{i},x_{i}}\Big).
$$

Let  $P_{1}, P_{2} \in \mathcal{P}(M)$ such that $P_{1}\perp P_{2}$,  then $P_{1}+P_{2}=P_{1}\vee P_{2}\in \mathcal{P}(M)$ and
$$
\begin{aligned}
V(P_{1}+P_{2})\Big(\sum_{i=1}^{N}C_{i}\overline{U}_{Q_{i},x_{i}}\Big)
&=\sum_{i=1}^{N}C_{i}\overline{U}_{(P_{1}+P_{2})Q_{i},x_{i}}\\
&=\sum_{i=1}^{N}C_{i}\overline{U}_{P_{1}Q_{i},x_{i}}
+\sum_{i=1}^{N}C_{i}\overline{U}_{P_{2}Q_{i},x_{i}}\\
&=\left( V(P_{1})+V(P_{2})\right)\Big(\sum_{i=1}^{N}C_{i}\overline{U}_{Q_{i},x_{i}}\Big).
\end{aligned}
$$
Thus $V: \mathcal{P}(M)\to  \mathcal{L}(\mathcal{M}_{U}) $ is finitely additive and  idempotent valued.
%At this moment  we will not be bothered to find the right side of Eq (\ref{map1}) is not finite linear span, as it will be our convention to deal with the completion of $\mathcal{M}_{U}$ and extend $V(P)$ on  its completion. [{\bf Rui: Please rephrase this sentence}]

Obviously we have
$$
U(P)=S V(P) T, ~\forall P\in \mathcal{P}(M)
$$

Observe that $V$ is only an algebraic dilation of $U$.  So we  need to impose a Banach space structure into consideration.  %For start, we  give the definition of dilation norms.

\begin{definition}
Let $\mathcal{M}_{U}$ be the elementary dilation space induced by $U$. Assume that $\|\cdot\|_{V}$ is a norm on $\mathcal{M}_{U},$ and denote its completion by $\widetilde{\mathcal{M}}_{U}.$  Then the norm $\|\cdot\|_{V}$ on $\mathcal{M}_{U}$ is called a \textit{dilation norm} of $U$ if the above linear mappings $S, T$ and $V(P)$ for any  $P \in \mathcal{P}(M)$ all are bounded.
\end{definition}

While the dilation space is not unique, the following result establishes the connections between any  projection-valued quantum measure dilation space  and  the elementary dilation space: any given  projection-valued dilation space $Y$ can induce a dilation norm  on $\mathcal{M}_{U}$ so that  $U$ can be dilated to a projection-valued quantum measure from $\mathcal{P}(M)$ to $B(\widetilde{\mathcal{M}}_{U})$.

\begin{theorem} \label{thm3.1}
Let $X$ be a Banach space and $M$ be a VN algebra without type $I_{2}$ direct summand. Let $U$ be  a $B(X)$-valued quantum measure on $\mathcal{P}(M)$ and $V$ be a corresponding projection-valued quantum measure from $\mathcal{P}(M)$ to $B(Y)$, and $S: Y\to X$, $ T: X \to Y $ are bounded linear operators. Then there exist an induced dilation norm $\|\cdot\|_{\mathcal{D}}$ on $\mathcal{M}_{U}$ (denote its completion by $\widetilde{\mathcal{M}}_{U,\mathcal{D}}$),  bounded linear maps $S_{\mathcal{D}}, T_{\mathcal{D}}$  and a projection-valued quantum measure $V_{\mathcal{D}}: \mathcal{P}(M)\to B(\widetilde{\mathcal{M}}_{U,\mathcal{D}})$ such that
$$
U(P)=S_{\mathcal{D}}V_{\mathcal{D}}(P) T_{\mathcal{D}}
$$
Furthermore,  there exists a linear  contraction
$$
W: \widetilde{\mathcal{M}}_{U,\mathcal{D}} \to Y/\ker S
$$
such that $S_{\mathcal{D}}=SW,~~ WT_{D}=qV(1)T$,  where $q$ is the quotient map form $Y$ to $Y/\ker S$ by $q(x)=[x]$.
\end{theorem}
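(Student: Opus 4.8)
The plan is to transport the Banach-space geometry of the given external dilation $(Y,V,S,T)$ onto the purely algebraic object $\mathcal{M}_U$ by means of a single linear embedding into an operator space over the quotient $Y/\ker S$, and then to read off every required bound directly from that embedding. First I would pass to linear extensions. By Lemma \ref{Gleasonthm} the projection-valued quantum measure $V$ extends uniquely to a bounded linear map $\overline{V}:M\to B(Y)$, and since $A\mapsto S\overline{V}(A)T$ is a bounded linear map on $M$ agreeing with $U(\cdot)=SV(\cdot)T$ on $\mathcal{P}(M)$, the uniqueness clause of Lemma \ref{Gleasonthm} forces
$$
\overline{U}(A)=S\overline{V}(A)T,\qquad \forall\, A\in M .
$$

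The heart of the construction is the following embedding. For $\Phi=\sum_{i=1}^{N}C_{i}\overline{U}_{Q_{i},x_{i}}\in\mathcal{M}_U$ define
$$
z_{\Phi}:M\to Y,\qquad z_{\Phi}(R)=\sum_{i=1}^{N}C_{i}\,\overline{V}(RQ_{i})\,Tx_{i},
$$
and set $\zeta_{\Phi}=q\circ z_{\Phi}:M\to Y/\ker S$, where $q:Y\to Y/\ker S$ is the quotient map. The key point is that $\zeta_{\Phi}$ depends only on $\Phi$ and not on its representation: applying $S$ gives $Sz_{\Phi}(R)=\sum_{i}C_{i}\overline{U}(RQ_{i})x_{i}=\Phi(R)$, so if two representations of $\Phi$ produce $z_{\Phi}$ and $z'_{\Phi}$, then $S(z_{\Phi}-z'_{\Phi})\equiv 0$, i.e. $(z_{\Phi}-z'_{\Phi})(R)\in\ker S$ and hence $\zeta_{\Phi}=\zeta'_{\Phi}$. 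As $R\mapsto RQ_{i}$ and $\overline{V}$ are linear and bounded, $\zeta_{\Phi}$ is a bounded linear map from $M$ into $Y/\ker S$, so $\Phi\mapsto\zeta_{\Phi}$ defines a linear map $\iota:\mathcal{M}_U\to B(M,\,Y/\ker S)$. I would then take the dilation norm to be
$$
\|\Phi\|_{\mathcal{D}}=\|\iota(\Phi)\|_{B(M,\,Y/\ker S)}=\sup_{R\in B_{M}}\big\|\zeta_{\Phi}(R)\big\| .
$$
This is genuinely a norm, not merely a seminorm: if $\|\Phi\|_{\mathcal{D}}=0$ then $z_{\Phi}(R)\in\ker S$ for every $R$, whence $\Phi(R)=Sz_{\Phi}(R)=0$, so $\Phi=0$. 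Denote the completion by $\widetilde{\mathcal{M}}_{U,\mathcal{D}}$.

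Next I would check that $\|\cdot\|_{\mathcal{D}}$ is a dilation norm and produce $W$; all three bounds follow from $\iota$. For $P\in\mathcal{P}(M)$ a direct computation gives $\zeta_{V(P)\Phi}(R)=\zeta_{\Phi}(RP)$, that is $\iota(V(P)\Phi)=\iota(\Phi)\circ\rho_{P}$ with $\rho_{P}(R)=RP$ a contraction on $M$; hence $\|V(P)\|\le 1$, so each $V(P)$ extends to a contractive idempotent $V_{\mathcal{D}}(P)$ on $\widetilde{\mathcal{M}}_{U,\mathcal{D}}$, and the finite additivity established in the excerpt is inherited, so $V_{\mathcal{D}}$ is a projection-valued quantum measure. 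Writing $\bar S:Y/\ker S\to X$ for the injection induced by $S$, one has $S(\Phi)=\Phi(I)=\bar S(\zeta_{\Phi}(I))$, giving $\|S(\Phi)\|_X\le\|S\|\,\|\Phi\|_{\mathcal{D}}$; and $\zeta_{T(x)}(R)=[\overline{V}(R)Tx]$ yields $\|T(x)\|_{\mathcal{D}}\le\|\overline{V}\|\,\|T\|\,\|x\|$. Thus $S,T,V(P)$ are all bounded, so $\|\cdot\|_{\mathcal{D}}$ is a dilation norm, and the factorization $U(P)=S_{\mathcal{D}}V_{\mathcal{D}}(P)T_{\mathcal{D}}$, already valid algebraically on $\mathcal{M}_U$, persists on the completion by continuity. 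Finally I set $W(\Phi)=\zeta_{\Phi}(I)$, the evaluation at $I$ of $\iota(\Phi)$; since $I\in B_{M}$ this is a contraction, $\|W\Phi\|\le\|\Phi\|_{\mathcal{D}}$, extending to a contraction $W:\widetilde{\mathcal{M}}_{U,\mathcal{D}}\to Y/\ker S$. The two identities are then immediate: $\bar S W(\Phi)=\bar S(\zeta_{\Phi}(I))=Sz_{\Phi}(I)=\Phi(I)=S_{\mathcal{D}}(\Phi)$, and for $\Phi=\overline{U}_{I,x}=T_{\mathcal{D}}(x)$ one gets $WT_{\mathcal{D}}(x)=\zeta_{\overline{U}_{I,x}}(I)=[\overline{V}(I)Tx]=qV(1)T(x)$.

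The one structural point to watch, and the reason the quotient $Y/\ker S$ is doing real work, is twofold: the given projection-valued $V$ need not be multiplicative (it may only be a Jordan-type map), so I deliberately never factor $\overline{V}(RQ_{i})$ as $\overline{V}(R)\overline{V}(Q_{i})$ — every step uses only linearity of $\overline{V}$, the scalar identity $\overline{U}=S\overline{V}T$, and associativity $R(PQ_{i})=(RP)Q_{i}$ in $M$. The main subtlety is establishing that $\zeta_{\Phi}$ is representation-independent while the resulting norm remains nondegenerate: passing to $Y/\ker S$ is exactly what makes $\zeta_{\Phi}$ well defined, whereas quotienting by $\ker S$ alone (rather than by a larger subspace) is what keeps $\|\cdot\|_{\mathcal{D}}$ a genuine norm.
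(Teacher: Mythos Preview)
Your proof is correct and follows essentially the same approach as the paper: you define the identical norm $\|\Phi\|_{\mathcal{D}}=\sup_{R\in B_M}\big\|\big[\sum_i C_i\overline{V}(RQ_i)Tx_i\big]\big\|_{Y/\ker S}$, verify well-definedness via $\overline{U}=S\overline{V}(\cdot)T$, and check the same bounds for $S_{\mathcal{D}},T_{\mathcal{D}},V_{\mathcal{D}}(P)$ and $W$. Your packaging of the construction as a linear embedding $\iota:\mathcal{M}_U\to B(M,Y/\ker S)$ with $\|\cdot\|_{\mathcal{D}}$ the pulled-back operator norm is a cleaner way to say the same thing (and your observation $\iota(V(P)\Phi)=\iota(\Phi)\circ\rho_P$ makes the contractivity of $V_{\mathcal{D}}(P)$ transparent), but the substance is identical to the paper's argument.
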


\begin{proof}
By Lemma \ref{Gleasonthm},  the quantum measure  $V$ extends to a bounded linear operator $\overline{V}$ from $M$ to $B(Y)$. Define $\|\cdot\|_{\mathcal{D}}: \mathcal{M}_{U}\to \mathbb{R}^{+}\cup \{0\}$ by
$$
\Big\|\sum_{i=1}^{N}C_{i}\overline{U}_{Q_{i},x_{i}}\Big\|_{\mathcal{D}}=\sup_{R\in B_{M}}\Big\|\Big[\sum_{i=1}^{N}C_{i}
\overline{V}(RQ_{i})Tx_{i}\Big]\Big\|_{Y/\ker S}
$$
where $N>0$,  $\{C_{i}\}_{i=1}^{N}\subset \mathbb{C}, \{Q_{i}\}_{i=1}^{N}\subset B_{M}, \{x_{i}\}_{i=1}^{N}\subset X$,  $\ker S$ is the kernel of operator $S$ and $\|\cdot\|_{Y/\ker S}$ denotes quotient norm on $Y/\ker S$.

First, we show  that $\|\cdot\|_{\mathcal{D}}$ is well-defined. If we have two representations of $\Phi \in \mathcal{M}_{U}$,
$$
\Phi=\sum_{i=1}^{N}C_{i}\overline{U}_{Q_{i},x_{i}}=\sum_{j=1}^{M}D_{j}\overline{U}_{Q_{j}^{\prime},y_{j}},
$$
where $M>0$,  $\{D_{j}\}_{j=1}^{M}\subset \mathbb{C}, \{Q_{j}^{\prime}\}_{j=1}^{M}\subset B_{M}, \{y_{j}\}_{j=1}^{M}\subset X$. Applying the generalized Gleason Theorem (see Lemma \ref{Gleasonthm}), we obtain that $ \overline{U}(\cdot)=S\overline{V}(\cdot)T. $
Thus  for $R \in M$
$$
\Phi(R)=S\Big(\sum_{i=1}^{N}C_{i}\overline{V}(RQ_{i})Tx_{i}\Big)
=S\Big(\sum_{j=1}^{M}D_{j}\overline{V}(RQ_{j}^{\prime})Ty_{j}\Big).
$$
Thus $\|\Phi\|_{\mathcal{D}}$ is  independent of the representation of $\Phi$. And
$$
\begin{aligned}
\Big\|\sum_{i=1}^{N}C_{i}\overline{U}_{Q_{i},x_{i}}\Big\|_{\mathcal{D}}
&=\sup_{R\in B_{M}}\Big\|\Big[\sum_{i=1}^{N}C_{i}
\overline{V}(RQ_{i})Tx_{i}\Big]\Big\|_{Y/\ker S} \\
&\leq \sup_{R\in B_{M}}\sum_{i=1}^{N}|C_{i}|
\big\|\big[\overline{V}(RQ_{i})Tx_{i}\big]\big\|_{Y/\ker S}\\
&\leq \sup_{R\in B_{M}}\sum_{i=1}^{N}|C_{i}|
\big\|\overline{V}(RQ_{i})Tx_{i}\big\|_{Y}\\
&\leq 4\|V\|\|T\|\Big(\sum_{i=1}^{N}|C_{i}|\|x_{i}\|\Big).
\end{aligned}
$$

Next we divide the rest of the proof into four steps:

(i)  $\|\cdot\|_{\mathcal{D}}$ is a norm. Obviously  for any $\Phi\in \mathcal{M}_{U}$, $\|\Phi\|_{\mathcal{D}}\geq 0$. If $\big\| \Phi\big\|_{\mathcal{D}}=0$,  suppose $\Phi=\sum_{i=1}^{N}C_{i}\overline{U}_{Q_{i},x_{i}}$, then,  $\sup_{R\in B_{M}}\big\|\big[\sum_{i=1}^{N}C_{i}
\overline{V}(RQ_{i})Tx_{i}\big]\big\|_{Y/\ker S}=0$, then for any $R\in B_{M}$, $\big[\sum_{i=1}^{N}C_{i}\overline{V}(RQ_{i})Tx_{i}\big]=0$ in  $Y/\ker S$, so $S\big(\sum_{i=1}^{N}C_{i}\overline{V}(RQ_{i})Tx_{i}\big)=0$, namely, $\Phi(R)=0 $ for any  $R$ in $ B_{M}$. By linearity, it follows that $\Phi=0.$  And it is routine to verify that  homogeneity and triangle inequality.  Thus  we have proved that $\|\cdot\|_{\mathcal{D}}$ is a norm on $\mathcal{M}_{U}$.

%Clearly  for any $\lambda $ in $ \mathbb{C}$  and $\Phi\in \mathcal{M}_{U}$,
%$$
%\|\lambda\Phi\|_{\mathcal{D}}
%=|\lambda|\|\Phi\|_{\mathcal{D}}
%$$
%
%For the norm triangle inequality,  $M>0$,  $\{C^{\prime}_{j}\}_{j=1}^{M}\subset \mathbb{C}, \{Q^{\prime}_{j}\}_{j=1}^{M}\subset B_{M}, \{y_{j}\}_{j=1}^{M}\subset X$. Then we have
%$$
%\begin{aligned}
%&\Big\|\sum_{i=1}^{N}C_{i}\overline{U}_{Q_{i},x_{i}}
%+\sum_{j=1}^{M}C^{\prime}_{j}\overline{U}_{Q^{\prime}_{j},y_{j}}\Big\|_{\mathcal{D}}\\
%&=\sup_{R\in B_{M}}\Big\|\Big[\sum_{i=1}^{N}C_{i}\overline{V}(RQ_{i})Tx_{i}
%+\sum_{j=1}^{M}C^{\prime}_{j}\overline{V}(RQ^{\prime}_{j})Ty_{j}\Big]\Big\|_{Y/\ker S}\\
%&\leq \sup_{R\in B_{M}}\Big\|\Big[\sum_{i=1}^{N}C_{i}\overline{V}(RQ_{i})Tx_{i}\Big]\Big\|_{Y/\ker S}
%+\sup_{R^{\prime}\in B_{M}}\Big\|\Big[\sum_{j=1}^{M}
%C^{\prime}_{j}\overline{V}(R^{\prime}Q^{\prime}_{j})Ty_{j}\Big]\Big\|_{Y/\ker S}\\
%&=\Big\|\sum_{i=1}^{N}C_{i}\overline{U}_{Q_{i},x_{i}}\Big\|_{\mathcal{D}}
%+\Big\|\sum_{j=1}^{M}C^{\prime}_{j}\overline{U}_{Q^{\prime}_{j},y_{j}}\Big\|_{\mathcal{D}}
%\end{aligned}
%$$

(ii)  Let $\widetilde{\mathcal{M}}_{U,\mathcal{D}}$ be the  completion of the normed space $\mathcal{M}_{U}$. We verify next the boundedness of $S_{\mathcal{D}}, T_{\mathcal{D}} $.  Since the linear mapping $S_{\mathcal{D}}: \widetilde{\mathcal{M}}_{U,\mathcal{D}} \to X$ is well-defined by Equation (\ref{equS}), then we have
$$
S_{\mathcal{D}}\Big(\sum_{i=1}^{N}C_{i}\overline{U}_{Q_{i},x_{i}}\Big)=\sum_{i=1}^{N}C_{i}\overline{U}(Q_{i})x_{i},
$$
and
$$
\begin{aligned}
\Big\|S_{\mathcal{D}}(\sum_{i=1}^{N}C_{i}\overline{U}_{Q_{i},x_{i}})\Big\|_{X}
%&=\Big\|\sum_{i=1}^{N}C_{i}\overline{U}(Q_{i})x_{i}\Big\|_{X}\\
&=\Big\|\sum_{i=1}^{N}C_{i}S\overline{V}(Q_{i})Tx_{i}\Big\|_{X}\\
&\leq \|S\|\Big\|\Big[\sum_{i=1}^{N}C_{i}V(P_{i})Tx_{i}\Big] \Big\|_{Y/\ker S}\\
&\leq \|S\|\Big\|\sum_{i=1}^{N}C_{i}\overline{U}_{Q_{i},x_{i}}\Big\|_{\mathcal{D}}.
\end{aligned}
$$

And  the linear mapping $T_{\mathcal{D}}:  X\to \widetilde{\mathcal{M}}_{U, \mathcal{D}}; ~  T_{\mathcal{D}}(x)=\overline{U}_{I,x}$ is well-defined by Equation (\ref{equT}), then we have
$$
\|T_{\mathcal{D}}(x)\|=\sup_{R\in B_{M}}\left\| [ \overline{V}(R)Tx ]\right\|_{Y/\ker S}
\leq \sup_{R\in B_{M}}\|\overline{V}(R)Tx\|_{Y}
\leq 4\|V\|\|T\|\|x\|.
$$
Thus $ S_{\mathcal{D}}, T_{\mathcal{D}}$ are both  bounded linear operators.

(iii) By Equation (\ref{equV}),  the mapping $V_{\mathcal{D}}: \mathcal{P}(M)\to B(\widetilde{\mathcal{M}}_{U,\mathcal{D}})$  is  finitely additive and  idempotent valued.

%\begin{equation}\label{map2}
%V_{\mathcal{D}}(P)\Big(\sum_{i=1}^{N}C_{i}\overline{U}_{Q_{i},x_{i}} \Big)
%=\sum_{i=1}^{N}C_{i}\overline{U}_{PQ_{i},x_{i}}.
%\end{equation}

%Note that for any $Q $ in $M$, by the linearity and continuity of $\overline{U}$, we have
%$$
%\left(\sum_{i=1}^{N}C_{i}(\sum_{j=1}^{\infty}\lambda_{i,j}U_{P_{i,j},x_{i}})\right)(Q)
%=\sum_{i=1}^{N}C_{i}(\sum_{j=1}^{\infty}\lambda_{i,j}\overline{U}(QP_{i,j})x_{i})
%=\sum_{i=1}^{N}C_{i}\overline{U}(QPP_{i})x_{i}
%$$
%that is,
%$$
%\left(V_{D}(P)(\sum_{i=1}^{N}C_{i}U_{P_{i},x_{i}})\right)(Q)
%=\sum_{i=1}^{N}C_{i}\overline{U}(QPP_{i})x_{i}.
%$$

%
%If we generalize the elements in $\widetilde{\mathcal{M}}_{U,\mathcal{D}}$ by defining  $\overline{U}_{Q, x}: M\to X$ ( where $Q \in M,  x \in X$ ):
%$$
%\overline{U}_{Q,x}(R)=\overline{U}(RQ)x, ~~\forall~~ R\in M,
%$$
%then the Equation (\ref{map2}) becomes
%$$
%V_{\mathcal{D}}(P)(\sum_{i=1}^{N}C_{i}U_{P_{i},x_{i}})=\sum_{i=1}^{N}C_{i}\overline{U}_{PP_{i},x_{i}}.
%$$

%Thus we have verified that
%$
%V_{\mathcal{D}}(P)=V_{\mathcal{D}}(P)V_{\mathcal{D}}(P)
%$
%and
%$V_{\mathcal{D}}(P_{\alpha}+P_{\beta})=V_{\mathcal{D}}(P_{\alpha})+V_{\mathcal{D}}(P_{\beta})$
%for any $P, P_{\alpha},P_{\beta} \in \mathcal{P}(M)$ and $P_{\alpha}P_{\beta}=0$.

Moreover  for every $P\in \mathcal{P}(M)$,
$$
\begin{aligned}
\Big\|V_{\mathcal{D}}(P)\Big(\sum_{i=1}^{N} C_{i}\overline{U}_{Q_{i}, x_{i}}\Big)\Big\|_{\mathcal{D}}
&=\Big\|\sum_{i=1}^{N}C_{i}\overline{U}_{PQ_{i},x_{i}}\Big\|_{\mathcal{D}}\\
&=\sup_{R\in B_{M}}\Big\|\Big[\sum_{i=1}^{N}C_{i}\overline{V}(RPQ_{i})Tx_{i}\Big]\Big\|_{Y/\ker S}\\
&\leq \Big\|\sum_{i=1}^{N} C_{i}\overline{U}_{Q_{i}, x_{i}}\Big\|_{\mathcal{D}}\\
\end{aligned}
$$
It follows that for any $P\in \mathcal{P}(M)$,  $\|V_{\mathcal{D}}(P)\|\leq 1$. Thus  the map $V_{\mathcal{D}}$ is well-defined and is a  projection-valued  quantum measure.

(iv) Finally  we show that there exists a linear  contraction from $\widetilde{\mathcal{M}}_{U,\mathcal{D}}$ into $Y$. Define $W:\widetilde{\mathcal{M}}_{U,\mathcal{D}} \to Y/\ker S $ by
$$
W:\sum_{i=1}^{N} C_{i}\overline{U}_{Q_{i},x_{i}} \to \Big[\sum_{i=1}^{N} C_{i}\overline{V}(Q_{i})Tx_{i}\Big].
$$
It is easy to see that $W$ is a well-defined linear contraction. Moreover
$$
\begin{aligned}
S_{\mathcal{D}}\Big(\sum_{i=1}^{N} C_{i}\overline{U}_{Q_{i},x_{i}}\Big)
&=\sum_{i=1}^{N} C_{i}\overline{U}(Q_{i})x_{i}=\sum_{i=1}^{N} C_{i}S\overline{V}(Q_{i})Tx_{i}\\
&=S\Big(\sum_{i=1}^{N} C_{i}\overline{V}(Q_{i})Tx_{i}\Big)=SW\Big(\sum_{i=1}^{N} C_{i}\overline{U}_{Q_{i},x_{i}}\Big)
\end{aligned}
$$
and for any $x\in X$,
$$
WT_{\mathcal{D}} x=W(\overline{U}_{I,x})=[V(I)T x]=q V(I)T x
$$
Thus we have $S_{\mathcal{D}}=SW,~~ WT_{D}=qV(I)T.$
\end{proof}

%While the connection between corresponding projection-valued quantum measure dilation space  and  the elementary dilation space has been investigated in the previous theorem. If we focus on the elementary dilation space itself with some special norms  to make its completion to be projection-valued dilation space so that we ensure the existence of projection-valued dilation space.

In what follows we will focus on introducing some special norms (such a norm will be called a {\it dilation norm})  on the (algebraic) elementary dilation space $\mathcal{M}_{U}$ so that its completion provides us the needed projection-valued dilations. Define $\|\cdot\|_{\mathcal{E}}:\mathcal{M}_{U}\to \mathbb{R}^{+}\cup 0 $ by
$$
\Big\|\sum_{i=1}^{N}C_{i}\overline{U}_{Q_{i},x_{i}}\Big\|_{\mathcal{E}}=\sup_{R\in B_{M}}\Big\|\sum_{i=1}^{N}C_{i}\overline{U}(RQ_{i})x_{i}\Big\|_{X}
$$
where integer $N>0$, $\{C_{i}\}_{i=1}^{N}\subset \mathbb{C}, \{Q_{i}\}_{i=1}^{N}\subset B_{M}, \{x_{i}\}_{i=1}^{N}\subset X$.

Assume that $\Psi \in \mathcal{M}_{U}$ has two different representations, set
$$
\Psi=\sum_{i=1}^{N}C_{i}\overline{U}_{Q_{i},x_{i}}=\sum_{j=1}^{M}D_{j}\overline{U}_{Q_{j}^{\prime},y_{j}},
$$
where  $ \{C_{i}\}_{i=1}^{N},\{D_{j}\}_{j=1}^{M} \subset \mathbb{C}, \{Q_{i}\}_{i=1}^{N},\{Q_{j}^{\prime}\}_{j=1}^{M} \subset B_{M}, \{x_{i}\}_{i=1}^{N},\{y_{j}\}_{j=1}^{M} \subset X$. Then for any $R\in M$, we have
$$
\Psi(R)=\sum_{i=1}^{N}C_{i}\overline{U}(RQ_{i})x_{i}=\sum_{j=1}^{M}D_{j}\overline{U}(RQ_{j}^{\prime})y_{j}.
$$
Then  $\|\Psi\|_{\mathcal{E}}$ is independent of the  representation of $\Psi$. And
$$
\|\Psi\|_{\mathcal{E}}=\Big\|\sum_{i=1}^{N}C_{i}\overline{U}_{Q_{i},x_{i}}\Big\|_{\mathcal{E}}=\sup_{R\in B_{M}}\Big\|\sum_{i=1}^{N}C_{i}\overline{U}(RQ_{i})x_{i}\Big\|_{X}\leq 4\|U\|\Big(\sum_{i=1}^{N}|C_{i}|\|x_{i}\|\Big) .
$$
Thus $\|\cdot\|_{\mathcal{E}}$ is well-defined.

\begin{proposition}\label{dilationnorm}
$\|\cdot\|_{\mathcal{E}}$ is a dilation norm on $\mathcal{M}_{U}$.
\end{proposition}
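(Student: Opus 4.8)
The plan is to recognize that $\|\cdot\|_{\mathcal{E}}$ is nothing but the operator sup-norm of $\Phi$ regarded as a bounded linear map from $M$ to $X$, and then to read off the boundedness of $S$, $T$ and each $V(P)$ directly from the structure of this supremum. Indeed, for $\Phi=\sum_{i=1}^{N}C_{i}\overline{U}_{Q_{i},x_{i}}$ one has $\Phi(R)=\sum_{i=1}^{N}C_{i}\overline{U}(RQ_{i})x_{i}$, so $\|\Phi\|_{\mathcal{E}}=\sup_{R\in B_M}\|\Phi(R)\|_{X}$. Since $\overline{U}$ is the bounded Gleason extension of $U$ (Lemma \ref{Gleasonthm}), each $\overline{U}_{Q,x}$ is a bounded map with norm at most $\|\overline{U}\|\,\|x\|\le 4\|U\|\,\|x\|$, so $\mathcal{M}_{U}$ sits inside the bounded maps $M\to X$ and $\|\cdot\|_{\mathcal{E}}$ is the restriction of the operator norm there. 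Well-definedness and the estimate $\|\Phi\|_{\mathcal{E}}\le 4\|U\|\big(\sum_{i}|C_{i}|\|x_{i}\|\big)$ have already been verified just above the statement.

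First I would dispatch the norm axioms. Homogeneity and the triangle inequality are immediate from the supremum form. For nondegeneracy, if $\|\Phi\|_{\mathcal{E}}=0$ then $\Phi(R)=0$ for every $R\in B_M$, whence $\Phi$ vanishes on all of $M$ by linearity; since elements of $\mathcal{M}_{U}$ are identified with their values as maps on $M$, this forces $\Phi=0$. Thus $\|\cdot\|_{\mathcal{E}}$ is a genuine norm rather than merely a seminorm.

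Next I would check boundedness of the three structural maps. For $S(\Phi)=\Phi(I)$, since $I\in B_M$ we get $\|S(\Phi)\|_{X}=\|\Phi(I)\|_{X}\le\sup_{R\in B_M}\|\Phi(R)\|_{X}=\|\Phi\|_{\mathcal{E}}$, so $\|S\|\le 1$. For $T(x)=\overline{U}_{I,x}$, I compute $\|T(x)\|_{\mathcal{E}}=\sup_{R\in B_M}\|\overline{U}(R)x\|_{X}\le\|\overline{U}\|\,\|x\|\le 4\|U\|\,\|x\|$, so $\|T\|\le 4\|U\|$.

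The only step with any content is the boundedness of $V(P)$, where the key observation is a change of variables. Since $V(P)\Phi=\sum_{i}C_{i}\overline{U}_{PQ_{i},x_{i}}$, we have $\|V(P)\Phi\|_{\mathcal{E}}=\sup_{R\in B_M}\big\|\sum_{i}C_{i}\overline{U}(RPQ_{i})x_{i}\big\|_{X}$. Because $P$ is a projection, $\|RP\|\le\|R\|\,\|P\|\le 1$ whenever $R\in B_M$, so $RP$ ranges over a subset of $B_M$ as $R$ does; substituting $R'=RP$ yields $\|V(P)\Phi\|_{\mathcal{E}}\le\sup_{R'\in B_M}\big\|\sum_{i}C_{i}\overline{U}(R'Q_{i})x_{i}\big\|_{X}=\|\Phi\|_{\mathcal{E}}$, so each $V(P)$ is a contraction. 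With $S$, $T$ and all $V(P)$ bounded, $\|\cdot\|_{\mathcal{E}}$ is by definition a dilation norm. I expect no serious obstacle here; the substitution $R\mapsto RP$ carrying $B_M$ into itself is the one place where the projection structure is genuinely used, and it is exactly what makes $V(P)$ contractive.
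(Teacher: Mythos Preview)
Your proposal is correct and follows essentially the same route as the paper: verify the norm axioms via the supremum form, then bound $S$ by evaluating at $R=I$, bound $T$ by the Gleason constant $4\|U\|$, and bound each $V(P)$ by the observation that $RP\in B_M$ whenever $R\in B_M$. Your remark that $\|\cdot\|_{\mathcal{E}}$ is just the restriction to $\mathcal{M}_U$ of the operator sup-norm on bounded maps $M\to X$ is a clean way to frame it, but the underlying estimates are identical to the paper's.
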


\begin{proof}
We divide our proof in two steps.

(i) First, we show $\|\cdot\|_{\mathcal{E}}$ is a norm. Obviously $\|\Psi\|_{\mathcal{E}}\geq 0, ~ \forall ~\Psi\in \mathcal{M}_{U}$.
If $\|\sum_{i=1}^{N}C_{i}\overline{U}_{Q_{i},x_{i}}\|_{\mathcal{E}}=0$, then for any $R\in B_{M}$, $\sum_{i=1}^{N}C_{i}\overline{U}(RQ_{i})x_{i}=0$, i.e., $ \big(\sum_{i=1}^{N}C_{i}\overline{U}_{Q_{i},x_{i}}\big)(R)=0$.  By linearity, then we have
$
\sum_{i=1}^{N}C_{i}\overline{U}_{Q_{i},x_{i}}=0.
$

By a routine calculation, we obtain that for any $\Phi, \Psi  \in \mathcal{M}_{U}$,
$$
\|\lambda \Psi\|_{\mathcal{E}}=|\lambda|\|\Psi\|_{\mathcal{E}} ~\forall ~ \lambda \in \mathbb{C} \quad  \text{ and }  \quad \|\Phi+ \Psi\|_{\mathcal{E}}\leq \|\Phi \|_{\mathcal{E}}+ \|\Psi\|_{\mathcal{E}}
$$
Hence $\|\cdot\|_{\mathcal{E}}$  is indeed a norm on $\mathcal{M}_{U}$.

(ii) Now we show that $\|\cdot\|_{\mathcal{E}}$ is a dilation  norm of $U$.

Note that the linear mapping $S_{\mathcal{E}}: \widetilde{\mathcal{M}}_{U, \mathcal{E}} \to X, S_{\mathcal{E}}(\Phi)=\Phi(I)$ is well-defined by Equation (\ref{equS}), and
$$
\Big\|S_{\mathcal{E}}\Big(\sum_{i=1}^{N}C_{i}\overline{U}_{Q_{i},x_{i}}\Big)\Big\|_{X}
=\Big\|\sum_{i=1}^{N}C_{i}\overline{U}(Q_{i})x_{i} \Big\|_{X}
\leq \sup_{R\in B_{M}}\Big\|\sum_{i=1}^{N}C_{i}\overline{U}(RQ_{i})x_{i} \Big\|_{X}
=\Big\| \sum_{i=1}^{N}C_{i}\overline{U}_{Q_{i},x_{i}}\Big\|_{\mathcal{E}}.
$$
Observed that we have verified  that $S_{\mathcal{E}}$ is bounded on the dense subspace of $\widetilde{\mathcal{M}}_{U, \mathcal{E}}$.  Thus $S_{\mathcal{E}}$ is bounded.

Meanwhile the linear mapping  $T_{\mathcal{E}}: X \to \widetilde{\mathcal{M}}_{U, \mathcal{E}}; ~ T_{\mathcal{E}}x=\overline{U}_{I,x}, ~\forall  x\in X$  is well-defined by Equation  (\ref{equT}), then we have
$$
\|Tx\|_{\mathcal{E}}=\|\overline{U}_{I,x}\|_{\mathcal{E}}=\sup_{R\in B_{M}}\|\overline{U}(R\cdot I)x\|\leq \sup_{R\in B_{M}}\|\overline{U}(R)\|\|x\|\leq 4\|U\|\|x\|.
$$
Hence $S_{\mathcal{E}},T_{\mathcal{E}} $ are both  bounded such that  $\|S_{\mathcal{E}}\|\leq 1$ and $\|T_{\mathcal{E}}\|\leq 4\|U\|. $

Besides,  the mapping $V_{\mathcal{E}}: \mathcal{P}(M)\to B(\widetilde{\mathcal{M}}_{U, \mathcal{E}})$ defined by Equation (\ref{equV}) is finitely additive and idempotent valued. Moreover, for any $P\in \mathcal{P}(M)$,
$$
\begin{aligned}
\Big\|V_{\mathcal{E}}(P)\Big(\sum_{i=1}^{N} C_{i}\overline{U}_{Q_{i}, x_{i}} \Big) \Big\|_{\mathcal{E}}
&=\Big\|\sum_{i=1}^{N}C_{i}\overline{U}_{PQ_{i},x_{i}} \Big\|_{X}\\
&=\sup_{R\in B_{M}}\Big\|\sum_{i=1}^{N}C_{i}\overline{U}(RPQ_{i})Tx_{i}\Big\|_{X}\\
&\leq \Big\|\sum_{i=1}^{N} C_{i}\overline{U}_{Q_{i}, x_{i}}\Big\|_{\mathcal{E}}.
\end{aligned}
$$
which implies that $V_{\mathcal{E}}(P) $ is bounded on the normed space $\mathcal{M}_{U}$, and  hence bounded on its completion $\widetilde{\mathcal{M}}_{U, \mathcal{E}}$

Therefore,  we have proved that $\widetilde{\mathcal{M}}_{U, \mathcal{E}}$ is a dilation space and $V_{\mathcal{E}}$ is a projection-valued dilation  of $U$.
\end{proof}

%[{\bf Rui: The above theorem is a direct construction about the existence of a projection-valued dilation. But this is already known by our dilation theorem of bounded linear maps. So, my question is: Why do we need this theorem here? Is it because that we will need this construction somewhere later?}]

%That is,  every  operator-valued  quantum measure can be dilated to a projection-valued  quantum measure. Unlike classical measure theory, we generally cannot acquire the  countable additivity of quantum measurements. With the assistance of Banach space technique, we obtain some results.

Now comes to our first main result on the dilation of countably additive quantum measures. Recall  that a Banach space is said to have the Schur property if every weakly convergent sequence converges in norm. (cf. \cite{AK})

\begin{theorem}\label{countablemeasuredilation}
Let $M$ be a $\sigma$-finite VN algebra without type $I_{2}$ direct summand,   and  assume that  a  Banach space X has either Schur property or it is the $\ell_{p}( 1\leq p <2 )$ space.  If
$$
U:\mathcal{P}(M)\to B(X),
$$
is  a countably additive  operator-valued quantum measure, then there exist a Banach space $Z$, bounded linear operators $S: Z\to X $ and $T: X \to Z $, and a countably additive  projection-valued  quantum measure $V: \mathcal{P}(M) \to B(Z) $ such that
$$
U(P)=SV(P)T.
$$
\end{theorem}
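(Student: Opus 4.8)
The plan is to take as the dilation the one already produced in Proposition \ref{dilationnorm}: set $Z=\widetilde{\mathcal{M}}_{U,\mathcal{E}}$, $S=S_{\mathcal{E}}$, $T=T_{\mathcal{E}}$ and $V=V_{\mathcal{E}}$, so that $U(P)=S_{\mathcal{E}}V_{\mathcal{E}}(P)T_{\mathcal{E}}$ holds with $V_{\mathcal{E}}$ a finitely additive projection-valued quantum measure. The only thing that remains is to upgrade finite additivity of $V_{\mathcal{E}}$ to countable additivity, which is exactly the delicate point flagged in the introduction. Since $\|V_{\mathcal{E}}(P)\|\le 1$ for every $P$, it suffices to check this on the dense subspace $\mathcal{M}_{U}$, and by linearity on a single generator $\Phi=\overline{U}_{Q,x}$. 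Writing $E_{N}=\sum_{n>N}P_{n}$ for a sequence $\{P_{n}\}$ of mutually orthogonal projections with supremum $P$, the whole theorem reduces to proving
$$
\big\|V_{\mathcal{E}}(E_{N})\overline{U}_{Q,x}\big\|_{\mathcal{E}}=\sup_{R\in B_{M}}\big\|\overline{U}(RE_{N}Q)x\big\|_{X}\longrightarrow 0\qquad(N\to\infty).
$$

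The engine for both cases is a single continuity lemma. Because $U$ is countably additive and $M$ is $\sigma$-finite, $U$ is completely additive (Remark \ref{Remarkondef}(i)), so by the normality statement recorded after Lemma \ref{Gleasonthm} (equivalently Proposition \ref{continuity}) the Gleason extension $\overline{U}$ is normal; hence $R\mapsto\overline{U}(R)x$ is continuous from the ultraweak topology of $M$ to the weak topology of $X$. Since on bounded sets strong operator convergence implies ultraweak convergence, I obtain the key implication: if $A_{k}\in B_{M}$ and $A_{k}\to 0$ in the strong operator topology, then $\overline{U}(A_{k})x\to 0$ weakly in $X$. I will run a contradiction argument: if the displayed supremum does not tend to $0$, there are $\varepsilon>0$, indices $N_{k}\uparrow\infty$ and contractions $R_{k}\in B_{M}$ with $\|\overline{U}(R_{k}E_{N_{k}}Q)x\|\ge\varepsilon$. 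Putting $A_{k}=R_{k}E_{N_{k}}Q$ and using $A_{k}^{*}A_{k}\le Q^{*}E_{N_{k}}Q\to 0$ strongly (because $E_{N_{k}}\downarrow 0$), one gets $A_{k}\to 0$ strongly, so $\overline{U}(A_{k})x\to 0$ weakly while $\|\overline{U}(A_{k})x\|\ge\varepsilon$. When $X$ has the Schur property this already closes the argument: a weakly null sequence is norm null, contradicting $\|\overline{U}(A_{k})x\|\ge\varepsilon$, so the Schur case needs nothing beyond the continuity lemma.

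The case $X=\ell_{p}$ with $1\le p<2$ is the real obstacle, since weak nullity no longer forces norm nullity. My plan is to make the witnesses genuinely orthogonal and then play the $\ell_{p}$ lower bound against the operator norm. For each fixed $R$ the set function $P\mapsto\overline{U}(RPQ)x$ is weakly, hence (Orlicz--Pettis) norm, countably additive, so $\overline{U}(R_{k}E_{N_{k}}Q)x=\sum_{n>N_{k}}\overline{U}(R_{k}P_{n}Q)x$ converges in norm; truncating, I replace $E_{N_{k}}$ by a finite partial sum $G_{k}=\sum_{N_{k}<n\le M_{k}}P_{n}$ with $\|\overline{U}(R_{k}G_{k}Q)x\|\ge\varepsilon/2$, and by choosing $N_{k+1}>M_{k}$ inductively I arrange the index blocks to be disjoint, so the $G_{k}$ are mutually orthogonal projections. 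Set $B_{k}=R_{k}G_{k}Q$; then $B_{k}\to 0$ strongly (as $G_{k}\to 0$ strongly), so $(\overline{U}(B_{k})x)_{k}$ is weakly null with norms in $[\varepsilon/2,\,4\|U\|\,\|x\|]$. By the Bessaga--Pe\l{}czy\'nski selection principle a subsequence is equivalent to the unit vector basis of $\ell_{p}$, giving $\big\|\sum_{k\le m}\overline{U}(B_{k})x\big\|\ge\delta(\varepsilon/2)\,m^{1/p}$ for some $\delta>0$. On the other hand the orthogonality of the $G_{k}$ yields the operator estimate $\big\|\sum_{k\le m}c_{k}B_{k}\big\|\le(\sum_{k}|c_{k}|^{2})^{1/2}$ (acting on a vector, bound $\|R_kG_kQ\xi\|\le\|G_kQ\xi\|$ and apply Cauchy--Schwarz with $\sum_k\|G_kQ\xi\|^{2}\le\|\xi\|^{2}$), so that $\big\|\sum_{k\le m}\overline{U}(B_{k})x\big\|=\big\|\overline{U}(\sum_{k\le m}B_{k})x\big\|\le 4\|U\|\,\|x\|\,\sqrt{m}$. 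Comparing the two gives $\delta(\varepsilon/2)m^{1/p}\le 4\|U\|\|x\|\sqrt{m}$ for all $m$, which is impossible once $1/p>1/2$, i.e. precisely when $p<2$. This contradiction proves the displayed limit, whence $V_{\mathcal{E}}$ is countably additive, completing the proof. The main difficulty, and the exact place where the hypothesis $p<2$ enters, is this final step: extracting orthogonal witnesses $G_{k}$ in spite of the arbitrary contractions $R_{k}$, and then beating the $\sqrt{m}$ operator-norm growth by the $\ell_{p}$-basis growth $m^{1/p}$.
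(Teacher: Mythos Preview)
Your proposal is correct and follows essentially the same route as the paper: the same elementary dilation space $\widetilde{\mathcal M}_{U,\mathcal E}$, the same reduction to $\sup_{R\in B_M}\|\overline U(RE_NQ)x\|\to 0$ on generators, the same extraction of mutually orthogonal finite blocks $G_k$ (the paper's $P'_l$), the same $\ell_2$-type operator-norm bound $\|\sum c_kR_kG_kQ\|\le(\sum|c_k|^2)^{1/2}$, and the same Bessaga--Pe\l czy\'nski/small-perturbation step to pass to a subsequence equivalent to the unit vector basis of $\ell_p$.

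The only real difference is the final contradiction: the paper argues that $\sum_l\alpha_ly_l$ converges for every $(\alpha_l)\in\ell_2$ and then picks $(\alpha_{l_k})\in\ell_2\setminus\ell_p$, while you take all coefficients equal to $1$ and directly compare the growth rates $m^{1/p}$ (from the $\ell_p$-basis lower bound) versus $m^{1/2}$ (from the operator-norm upper bound). Your version is a touch more concrete; the paper's is a touch more conceptual. Both use $p<2$ in exactly the same way.
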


\begin{proof}

Applying similar  arguments as in the proof of  Proposition \ref{dilationnorm}, we define
$$
\mathcal{M}_{U, X}=\operatorname{span}\{ \overline{U}_{Q,x} \big| Q\in B_{M}, x\in X \}
$$
and let
$
\widetilde{\mathcal{M}}_{U, X}
$
be its completion under the norm
$$
\Big\|\sum_{i=1}^{N}C_{i}\overline{U}_{Q_{i},x_{i}}\Big\|=\sup_{R\in B_{M}}\Big\|\sum_{i=1}^{N}C_{i}\overline{U}(RQ_{i})x_{i}\Big\|_{X},
$$
where $\sum_{i=1}^{N}C_{i}\overline{U}_{Q_{i},x_{i}}\in \mathcal{M}_{U, X} $ and $\overline{U}$ is ultraweakly-wot continuous, bounded linear extension of $U$.
Then the following two linear operators
$$
S: \widetilde{\mathcal{M}}_{U, X} \to X, \quad  S(\Phi)=\Phi(I).
\quad \quad
T: X \to \widetilde{\mathcal{M}}_{U, X},\quad  T(x)=\overline{U}_{I,x}
$$
are all bounded. Furthermore
$$
V: \mathcal{P}(M) \to B(\widetilde{\mathcal{M}}_{U, X}), \quad  V(P)\Big(\sum_{i=1}^{N}C_{i}\overline{U}_{Q_{i},x_{i}}\Big)
=\sum_{i=1}^{N}C_{i}\overline{U}_{PQ_{i}, x_{i}}, ~\forall~ P\in \mathcal{P}(M).
$$
is a projection-valued dilation of $U$.

To prove the countable additivity of $V$, by Remark \ref{Remarkondef} on the  Definition  \ref{quantummeasure} about quantum measures, it is equivalent to verify that $V$ is strong countably additive on $\widetilde{\mathcal{M}}_{U, X}$. Since we have proved that  $V$ is uniform bounded ($\|V(P)\|\leq 1, \forall P\in  \mathcal{P}(M)$) on $\mathcal{M}_{U, X}$, it is enough to verify that $V$ is strong countably additive on $\mathcal{M}_{U, X}$. Indeed,  for  $x\in \widetilde{\mathcal{M}}_{U, X}$,  there exists a sequence $\{x_{i}\}_{i=1}^{\infty}\subset \mathcal{M}_{U, X}$ such that $ \lim_{i\to \infty }x_{i}=x $.   Then for any  countable collection of mutually orthogonal projections $\{P_{n}\}_{n=1}^{\infty}$ of $\mathcal{P}(M)$ with supremum $P$, we have that
\begin{equation}\label{strongadditive}
\begin{aligned}
&\Big \|V(P)x -\sum_{n=1}^{N}V(P_{n})x\Big\|\\
\leq & \| V(P) \| \|x-x_{M}\|  + \Big\| V(P)x_{M}- \sum_{n=1}^{N}V(P_{n})x_{M}\Big\| + \Big\| W\Big(\sum_{n=1}^{N}P_{n}\Big)\Big\|\left\|x-x_{M}\right\|\\
\leq & 2 \|x-x_{M}\|+ \Big\| V(P)x_{M}- \sum_{n=1}^{N}V(P_{n})x_{M}\Big\|,
\end{aligned}
\end{equation}
where $x_{M}\in \mathcal{M}_{U, X}$. If we have proved that $V$ is strong countably additive on $\mathcal{M}_{U, X}$,  then for  any $\varepsilon>0,$ we can find $M>0$ and $L>0$ such that $\|x- x_M\| < \varepsilon$ and
$ \big\| V(P)x_{M}- \sum_{n=1}^{N}V(P_{n})x_{M}\big\| \leq \varepsilon$ for every $N \geq L$, which imply that  $V$ is countably additive  on $\widetilde{\mathcal{M}}_{U, X}$.

%\textcolor{blue}{(Is the ``sufficiently large $N$ should be replace by a different notation,  say,  $L$  such that the above inequality holds for all $N\geq L$?)} Hence it follows that $V$ is countably additive \textcolor{blue}{on $\mathcal{M}_{U, X}$???}

%\textcolor{blue}{It seems that you used the strong countable additivity of $V$ on $\mathcal{M}_{U, X}$ before proving it! The order should be reversed!}

Now we show $V$ is strong countably additive on $\mathcal{M}_{U, X}$. Let $\{P_{j}\}_{j=1}^{\infty}$ be a countable family of  mutually orthogonal projections in $\mathcal{P}(M)$ with supremum $P$.  Then for any  $\sum_{i=1}^{N}C_{i}\overline{U}_{Q_{i},x_{i}}\in \mathcal{M}_{U, X},$  we have

$$
\begin{aligned}
&\Big\|V(P)\Big(\sum_{i=1}^{N} C_{i} \overline{U}_{Q_{i}, x_{i}}\Big)-\sum_{j=1}^{M} V(P_{j})\Big(\sum_{i=1}^{N} C_{i} \overline{U}_{Q_{i}, x_{i}}\Big)\Big\|\\
%=&\Big\|\Big(\sum_{i=1}^{N} C_{i}\overline{U}_{PQ_{i}, x_{i}}\Big)-\sum_{j=1}^{M}\Big(\sum_{i=1}^{N}C_{i}\overline{U}_{P_{j}Q_{i},x_{i}}\Big) \Big\|\\
=&\Big\|\sum_{i=1}^{N}C_{i} \Big( \overline{U}_{PQ_{i},x_{i}}
-\sum_{j=1}^{M}\overline{U}_{P_{j}P_{i},x_{i}}\Big)\Big\|\\
=& \sup_{R\in B_{M}} \Big\| \sum_{i=1}^{N}C_{i} \Big(\overline{U}(RPQ_{i})x_{i} -\sum_{j=1}^{M}\overline{U}(RP_{j}Q_{i})x_{i}\Big)\Big\|_{X}\\
=&\sup_{R\in B_{M}} \Big\|\sum_{i=1}^{N}C_{i} \Big(\overline{U}(R(\sum_{j=M+1}^{\infty}P_{j})Q_{i})\Big)x_{i} \Big\|_{X}\\
\leq&\sum_{i=1}^{N}\Big|C_{i}\Big| \sup _{R^{\prime}\in B_{M}}
\Big\|\overline{U}\Big(\sum_{j=M+1}^{\infty}(R^{\prime}P_{j}Q_{i})\Big) x_{i}\Big\|_{X}\\
\end{aligned}
$$

\vspace{3mm}
\noindent{\bf Case I. } $X$ has Schur property.  Note that $\sum_{j=1}^{\infty}P_{j}\stackrel{sot}{\longrightarrow} P$ implies  $\sum_{j=M+1}^{\infty}P_{j}\stackrel{sot}{\longrightarrow} 0$ as $M\to \infty $. Then for any $R\in B_{M}$ we have $\sum_{j=M+1}^{\infty}RP_{j}Q_{i}\stackrel{wot}{\longrightarrow} 0 $. Since the ultraweak topology and the weak operator topology coincide in a bounded set, we have $\sum_{j=M+1}^{\infty}RP_{j}Q_{i}\stackrel{w*}{\longrightarrow} 0 $, which implies by  the ultraweak-wot continuity of $\overline{U}$ that
$$
\overline{U}\Big(\sum_{j=M+1}^{\infty}RP_{j}Q_{i}\Big)x_{i} \stackrel{w}{\longrightarrow} 0, \quad M \to \infty.
$$

Since $X$ has Schur property, we obtain that
$$
\Big\|\overline{U}\Big(\sum_{j=M+1}^{\infty}(RP_{j}Q_{i})\Big) x_{i}\Big\|_{X}\longrightarrow  0, ~ \forall ~ R\in B_{M},
$$
which implies the  strong countable additivity on $\mathcal{M}_{U, X}$.

\vspace{3mm}
\noindent{\bf Case II. $X = \ell_{p}$ ($1\leq p <2$).} Assume to the contrary that  $\sup _{R^{\prime} \in B_{M}}\big\|\overline{U}\big(\sum_{j=M+1}^{\infty}R^{\prime}P_{j}Q_{i}\big) x_{i}\big\|_{l_{p}}$ does not go to $0$ as $M\to \infty $.  Then we can find a $\delta > 0$, a sequence of $n_{1}\leq m_{1}< n_{2}\leq m_{2} < n_{3}\leq m_{3}< \dots $ and $\{   R_{l}^{\prime} \}_{l=1}^{\infty} \subset B_{M}$  such that
\begin{equation}\label{contradiction1}
\Big\|\sum_{j=n_{l}}^{m_{l}}\overline{U}(R_{l}^{\prime}P_{j}^{\prime}Q_{i})x_{i}\Big\|\geq \delta,  \forall ~ l\in \mathbb{N}
\end{equation}

Set $P_{l}^{\prime}=\sum_{j=n_{l}}^{m_{l}}P_{j}$, and $y_{l}=\overline{U}(R_{l}^{\prime}P_{l}^{\prime}Q_{i})x_{i}$. Then we have two claims about the sequence $\{ y_{l}\}_{l=1}^{\infty}$ which will lead to a contradiction.
\vspace{3mm}

\noindent \textbf{Claim 1}: $ \forall ~ (\alpha_{l})\in \ell_{2}, ~~ \sum_{l=1}^{\infty}\alpha_{l}y_{l}$  converges unconditionally in norm

To verify claim 1,  we first show that  for any $(\alpha_{l})\in \ell_{2}$,
$
\sum_{l=1}^{\infty} \alpha_{l} R_{l}^{\prime}P_{l}^{\prime}Q_{i}
$
converges  unconditionally in norm,  where $ R_{l}^{\prime}\in B_{M} $  and $\{P_{l}^{\prime}\}$ is also a family of mutually orthogonal projections.  By the characterization of unconditional convergence in \cite{He}, this is equivalent to show that $\sum_{l=1}^{\infty}\varepsilon_{l}\alpha_{l} R_{l}^{\prime}P_{l}^{\prime}Q_{i}$  converges for every choice of signs $ \varepsilon_{l}=\pm 1.$

%\sout{In order  to prove $\sum_{l=1}^{\infty} \alpha_{l} R_{l}^{\prime}P_{l}^{\prime}Q_{i} $ converges unconditionally  in norm, by the characterization of unconditional convergence in \cite{He}, is equivalent to $\sum_{l=1}^{\infty}\varepsilon_{l}\alpha_{l} R_{l}^{\prime}P_{l}^{\prime}Q_{i}$  converges for every choice of signs $ \varepsilon_{l}=\pm 1, $  that is to  show $\sum_{l=1}^{\infty}\varepsilon_{l}\alpha_{l} R_{l}^{\prime}P_{l}^{\prime}Q_{i}$ is \textit{cauchy }. i.e.,}
%
%\sout{$\forall ~  \varepsilon>0, ~ \exists ~ N_{0}>0, \quad \forall~ N
%>M  \geq N_{0},$}
%\sout{$
%\Big\|\sum_{l=M+1}^{N}\varepsilon_{l}\alpha_{l} R_{l}^{\prime}P_{l}^{\prime}Q_{i}\Big\|<\varepsilon
%$
%}
%$x\in H$ and

As any $\sigma$-finite VN algebra can be realized for some Hilbert space $H$ as a VN subalgebra  in $ B(H)$ \cite{Pi}, Note that for every $ N>M $, we have
$$
\begin{aligned}
\Big\|\sum_{l=M+1}^{N}\varepsilon_{l}\alpha_{l} R_{l}^{\prime}P_{l}^{\prime}Q_{i}\Big\|&=\sup_{x\in H,\|x\|\leq 1} \Big|\Big\langle \sum_{l=M+1}^{N}\varepsilon_{l}\alpha_{l} R_{l}^{\prime}P_{l}^{\prime}Q_{i}x, x \Big\rangle\Big|\\
%&=\sup_{x\in H,\|x\|\leq 1}\Big|\sum_{l=M+1}^{N}\varepsilon_{l}\alpha_{l} \langle  P_{l}^{\prime}Q_{i}x, {R_{l}^{\prime}}^{*}x \rangle\Big|\\
&\leq \sup_{x\in H,\|x\|\leq 1}\Big(\sum_{l=M+1}^{N}|\alpha_{l}|
\|P_{l}^{\prime}Q_{i}x\|\|{R_{l}^{\prime}}^{*}x\| \Big)\\
&\leq \sup_{x\in H,\|x\|\leq 1}\Big(\sum_{l=M+1}^{N}|\alpha_{l}|
\|P_{l}^{\prime}Q_{i}x\|\Big)\\
&\leq \sup_{x\in H,\|x\|\leq 1} \Big(\sum_{l=M+1}^{N}|\alpha_{l}|^{2}\Big)^{1/2}
\Big(\sum_{l=M+1}^{N}\|P_{l}^{\prime}Q_{i}x\|^{2} \Big)^{1/2} \\
&\leq \Big( \sum_{l=M+1}^{N}|\alpha_{l}|^{2}\Big)^{1/2} \Big(\sup_{x\in H,\|x\|\leq 1} \|Q_{i}x\|\Big)\\
&\leq \Big( \sum_{l=M+1}^{N}|\alpha_{l}|^{2}\Big)^{1/2}.
\end{aligned}
$$
%which implies that $\sum_{l=1}^{\infty}\varepsilon_{l}\alpha_{l} R_{l}^{\prime}P_{l}^{\prime}Q_{i}$ converges in norm, and so $\sum_{l=1}^{\infty} \alpha_{l} R_{l}^{\prime}P_{l}^{\prime}Q_{i} $ converges unconditionally in norm.

%\textcolor{blue}{Guys: I revised the the following two paragraphs. Please check  accuracy}

Since $( \alpha_{l} )\in \ell_{2} $, the above inequality implies that $\sum_{l=1}^{\infty} \alpha_{l} R_{l}^{\prime}P_{l}^{\prime}Q_{i} $  is unconditionally convergent in norm and  hence every subseries converges in the sense that $\sum_{k=1}^{\infty}  \alpha_{l_{k}} R_{l_{k}}^{\prime}P_{l_{k}}^{\prime}Q_{i} $ converges for every increasing sequence $\{\ell_{k}\}$.  Since  $\overline{U}$ is  ultraweakly-wot continuous,  we get that  every subseries of $\sum_{l=1}^{\infty} \alpha_{l}\overline{U}(R_{l}^{\prime}P_{l}^{\prime}Q_{i})x_{i}$ is weakly convergent. Thus, by the  Orlicz-Pettis Theorem, we get that $\sum_{l=1}^{\infty} \alpha_{l}\overline{U}( R_{l}^{\prime}P_{l}^{\prime}Q_{i})x_{i}$ converges unconditionally. Therefore we have proved the claim that $ \sum_{k=1}^{\infty}\alpha_{l}y_{l}$ converges unconditionally  in norm for any $(\alpha_{l})\in \ell_{2}$.

%(\textcolor{blue}{Rui: There are many kinds of bases for Banach spaces. What basis are we talking about here})

The second claim is about the subsequence of $\{ y_{l}\}_{l=1}^{\infty}$. We  first introduce some necessary terminology in Banach space theory. We say that  sequence $\{ x_{n}\}_{n=1}^{\infty}$ is a \textit{basic sequence} if $\{ x_{n}\}_{n=1}^{\infty}$ is a (Schauder) basis for its closed linear span. Two basic sequences $\{ x_{n}\}_{n=1}^{\infty}$ and $\{ y_{n}\}_{n=1}^{\infty}$ are said to be \textit{equivalent} if for every sequence of scalars $\{ a_{n}\}_{n=1}^{\infty}$, $\sum_{n=1}^{\infty}a_{n}x_{n}$ converges if and only if $\sum_{n=1}^{\infty}a_{n}y_{n}$ converges. Given a basic sequence $(x_{n})_{n=1}^{\infty}$ and an increasing sequence of positive integer $p_{1} < q_{1} < p_{2} < q_{2} \ldots.$  If  $z_{k}= \sum_{i=p_{k}}^{q_{k}}b_{i}x_{i}$ is a nonzero vector for each $k$, then we say $(z_{k})_{k=1}^{\infty}$ is a \textit{block basic sequence} with respect to $\{ x_{n}\}_{n=1}^{\infty}$.

\vspace{3mm}
\noindent \textbf{Claim 2}: There exists a subsequence $\{ y_{l_{k}}\}_{k=1}^{\infty}$ of $ \{ y_{l} \}_{l=1}^{\infty}$  that is equivalent to the canonical unit vector basis $\{ e_{k} \}_{k=1}^{\infty}$ of $\ell_{p} (1\leq p < 2).$

Since $ \{  P_{l}^{\prime} \}_{l=1}^{\infty} $ is a family of mutually orthogonal projections such that $\sum_{l=1}^{\infty}P_{l}^{\prime}\leq P$,   we get that $P_{l}^{\prime}\stackrel{sot}{\longrightarrow} 0$ as $l\to \infty $. Then for any operator sequence $\{R_{l}^{\prime}\}\subset B_{M}$, we have
$$
R_{l}^{\prime}P_{l}^{\prime}Q_{i} \stackrel{w*}{\longrightarrow} 0, \quad l\to \infty,
$$
and hence
$$
y_{l}=\overline{U}(R_{l}^{\prime}P_{l}^{\prime}Q_{i})x_{i}\stackrel{w}{\longrightarrow} 0, \quad l \to \infty.
$$

%As observed above, $\{y_{l}\}_{l=1}^{\infty}$ is weakly convergent to $0$ but not necessarily in norm convergence. By the Bessaga-Petczy\'{n}ski selection principle \cite{AK}, there is a subsequence $\{y_{l_{k}}\}_{k=1}^{\infty}$ of $\{y_{l}\}_{l=1}^{\infty}$ which is a basic sequence  equivalent to a block basic sequence $\{z_{k}\}_{k=1}^{\infty}$ of $\{e_{n}\}_{n=1}^{\infty} $.  We show how $\{z_{k}\}$ is constructed and prepare for the subsequent results.
%
%
%*********
%
%\textcolor{blue}{Rui: I am still a bit confused about what you are trying to say here. On one hand, you mentioned  above the existence of the  block basic sequence by Bessaga-Petczy\'{n}ski selection principle. On the other hand, you are trying  below to directly construct such a sequence below. Are you trying to say that you can construct such one which actually is also basic sequence? (Is every basis sequence automatically equivalent to $\{e_{k}\}$?) It seems that this is the case. If yes,  then you don't really need the above paragraph since it only causes confusion.
% Please clarify this and make necessary changes.}
%
%
%**********

%Since for all $l$, $\|y_{l}\|\geq \delta$ (\textcolor{blue}{do you mean there exists a $\delta$ such that ... ?})\textcolor{red}{(\ref{contradiction1})},

Since by (\ref{contradiction1}), there exists a $\delta>0$ such that for all $l$, $\|y_{l}\|\geq \delta$, let $0<\beta< \frac{1}{7}$ and  $S_{m}$ denote the $m$th-partial sum operator with respect  to the canonical basis $\{e_{n}\}_{n=1}^{\infty}$ of $\ell_{p}$.  Note that  $y_{l}\stackrel{w}{\longrightarrow} 0$ and $S_{m}$ is a finite rank operator and  hence  it is compact operator.  First pick $l_{1}=1,m_{0}=0$, and then  sequentially choose a sequence $m_{1}, l_{2}, m_{2}, l_{3}, m_{3}, \dots $ (where $\{l_{k}\},\{m_{k}\}$  are both increasing sequences)  such that
$$
\|S_{m_{k-1}}y_{l_{k}}\|< \delta\beta^{k} \quad \text{ and } \quad \|y_{l_{k}}-S_{m_{k}}y_{l_{k}}\|<  \delta\beta^{k}
$$
For each that $k\in \mathbb{N}$,   let $z_{k}=S_{m_{k}}y_{l_{k}}-S_{m_{k-1}}y_{l_{k}}$.  Then $\{ z_{k}\}_{k=1}^{\infty}$  is block basic sequence of the basis $\{ e_{n}\}_{n=1}^{\infty}$, particularly $\{ z_{k}\}_{k=1}^{\infty}$  is basic sequence with basis constant less than or equal to $1$ (where $\sup_{m}\|S_{m}\| $ is called the \textit{basis constant} and  the basis constant of $\ell_{p}$ is 1 for $1\leq p< \infty$). Note that for each $k$, we have
$$
\|y_{l_{k}}-z_{k}\|< 2\delta\beta^{k}
\quad \text{ and } \quad
\|z_{k}\| > \delta-2\delta\beta=(1-2\beta)\delta.
$$
Therefore
$$
2\sum_{k=1}^{\infty}\dfrac{\|y_{l_{k}}-z_{k}\|}{\|z_{k}\|}\leq \sum_{k=1}^{\infty}\dfrac{4\delta\beta^{k}}{(1-2\beta)\delta}  =\dfrac{4\beta}{(1-2\beta)(1-\beta)} < 1
$$
By the Principle of small perturbations (\cite{AK},Theorem 1.3.9), $\{y_{l_{k}}\}_{k=1}^{\infty}$ is a basic sequence and equivalent to $\{z_{k}\}_{k=1}^{\infty}$.

 Recall that  a sequence $ \{ w_{n}\}_{n=1}^{\infty}$ is  called {\it seminormalized}  if  $$ 0< \inf_{n}\|w_{n}\| \leq \sup _{n}\|w_{n}\|<\infty, $$
and it is known that any seminormalized block basic sequence $ \{ w_{n}\}_{n=1}^{\infty}$ of $\{e_{n}\}_{n=1}^{\infty}$ is equivalent to $\{e_{n}\}_{n=1}^{\infty}$ in $\ell_{p}$  (see \cite{AK}).  Since for each $k$, we have
$$
\frac{5}{7}\delta  \leq \|z_{k}\| \leq  \|y_{l_{k}}\|+ 2\delta \beta \leq  \| 4\|U\|\|x_{i}\|+ \frac{2}{7}\delta,
$$
we get that $\{z_{k}\}_{k=1}^{\infty}$ is  seminormalized block basis of $\{e_{n}\}_{n=1}^{\infty}$ and  hence it is equivalent to $\{e_{k}\}_{k=1}^{\infty} $.
Thus $\{y_{l_{k}}\}_{k=1}^{\infty}$ is equivalent to $\{e_{k}\}_{k=1}^{\infty}$  by the definition of equivalence of basic sequences.

Finally as we have seen if (\ref{contradiction1}) holds, then we can choose some $(a_{l_{k}})_{k=1}^{\infty} \in \ell_{2}$ such that $\sum_{k=1}^{\infty}a_{l_{k}}y_{l_{k}}$ converges in norm by Claim 1.  By the  ``equivalence" stated in  Claim 2,  we get that $ \sum_{k=1}^{\infty}a_{l_{k}}e_{k}$  converges in $\ell_{p} (1\leq p < 2)$,   which contradicts with the fact that $\ell_{p}(1\leq p < 2)$ is a proper subspace of $\ell_{2}$. Therefore we must have
$$
\sup _{R^{\prime} \in B_{M}}\big\|\overline{U}\big(\sum_{j=M+1}^{\infty}\big(R^{\prime}P_{j}Q_{i}\big)\big) x_{i}\big\|_{\ell_{p}} \to 0,
$$
which implies that
$$
V(P)\Big(\sum_{i=1}^{N} C_{i} \overline{U}_{Q_{i}, x_{i}}\Big)=\sum_{j=1}^{\infty} V(P_{j})\Big(\sum_{i=1}^{N} C_{i} \overline{U}_{Q_{i}, x_{i}}\Big).
$$
Thus $V$ is  strongly countably  additive on $\mathcal{M}_{U, X}$.
\end{proof}

%Conventional, operator-valued measures on classical measure space (resp. bounded linear maps) admit projection-valued measure (resp. bounded homomorphism) dilations \cite{HLL, HLLL1, Pa}.  However, a projection-valued quantum measure, when looked at form a dilation point view,  the induced map is not an homomorphism but always a Jordan homomorphism on VN algebras.

Recall that a {\it Jordan homomorphism} $\Phi: M \rightarrow N$ from  VN algebras $M$ to an algebra  $N$ is a linear map satisfying the condition $\Phi\left(a^{2}\right)=\Phi(a)^{2}$ for all $a \in M$ (equivalently, $\Phi(a b+b a)=\Phi(a) \Phi(b)+\Phi(b) \Phi(a)$ for all $a, b \in M)$.

%\vspace{5mm}
%[{\bf Rui: I believe the definition is for any $a, b\in M$. But most likely  they are equivalent when $M$ is a VN algebra.}]

\begin{theorem}\label{maptojordan}
Let $M$ be a $\sigma$-finite  VN algebra without Type $I_{2}$ direct summand, and X has Schur property or $X$ is $\ell_{p} (1\leq p < 2)$.   If $\varphi: M \rightarrow B(X)$ be a ultraweakly-wot continuous bounded linear  mapping, then there is a Banach space $Z$ such that $\varphi$ can be dilated to a ultraweakly-wot continuous Jordan homomorphism $\phi$ from $M$ to $B(Z)$.
\end{theorem}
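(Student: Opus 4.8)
The plan is to reduce Theorem \ref{maptojordan} to the measure-theoretic dilation already established in Theorem \ref{countablemeasuredilation}, and then to upgrade the projection-valued dilation of the restricted measure to a linear (Jordan) homomorphism by means of the right regular action of $M$ on the elementary dilation space.

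First I would restrict $\varphi$ to the projection lattice and set $U = \varphi|_{\mathcal{P}(M)} : \mathcal{P}(M) \to B(X)$. Since $\varphi$ is bounded, $U$ is a bounded quantum measure ($\|U(P)\| \le \|\varphi\|$ because $\|P\| \le 1$). Because $\varphi$ is ultraweakly-wot continuous, Proposition \ref{continuity} shows $\varphi$ is completely additive, and hence so is its restriction $U$ on orthogonal families of projections; as $M$ is $\sigma$-finite, complete additivity of $U$ is the same as countable additivity by Remark \ref{Remarkondef}(i). Thus $U$ is a bounded countably additive $B(X)$-valued quantum measure, to which Theorem \ref{countablemeasuredilation} applies (under either hypothesis on $X$). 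This yields the elementary dilation space $Z = \widetilde{\mathcal{M}}_{U,X}$ with its dilation norm, bounded operators $S : Z \to X$ and $T : X \to Z$, and a countably additive projection-valued quantum measure $V : \mathcal{P}(M) \to B(Z)$ given by $V(P)(\sum_{i} C_{i} \overline{U}_{Q_{i},x_{i}}) = \sum_{i} C_{i} \overline{U}_{PQ_{i}, x_{i}}$, with $U(P) = SV(P)T$.

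Next I would extend $V$ from $\mathcal{P}(M)$ to all of $M$ by the right regular action: for $A \in M$ define $\phi(A)$ on $\mathcal{M}_{U,X}$ by $(\phi(A)\Phi)(R) = \Phi(RA)$, equivalently $\phi(A)\overline{U}_{Q,x} = \overline{U}_{AQ,x}$, where homogeneity in the $Q$-slot keeps the value inside $\mathcal{M}_{U,X}$ even when $\|AQ\| > 1$. For $A \in B_{M}$ the same estimate that bounds $V(P)$ gives $\|\phi(A)(\sum_{i} C_{i} \overline{U}_{Q_{i},x_{i}})\| = \sup_{R\in B_{M}}\|\sum_{i} C_{i} \overline{U}(RAQ_{i})x_{i}\|_{X} \le \|\sum_{i} C_{i} \overline{U}_{Q_{i},x_{i}}\|$, since $RA \in B_{M}$ whenever $R,A \in B_{M}$; hence $\|\phi(A)\| \le \|A\|$ for all $A$ and $\phi(A)$ extends to an element of $B(Z)$. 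Because function composition reverses the anti-homomorphism $R \mapsto RA$, a direct computation gives $\phi(A)\phi(B) = \phi(AB)$ on the generators $\overline{U}_{Q,x}$ and hence everywhere; thus $\phi : M \to B(Z)$ is a bounded homomorphism, in particular a Jordan homomorphism, and $\phi(P) = V(P)$ for projections, so $\phi$ coincides with the unique Gleason extension $\overline{V}$ of $V$ (Lemma \ref{Gleasonthm}).

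Finally I would verify that $\phi$ dilates $\varphi$ and is normal. The map $A \mapsto S\phi(A)T$ is bounded and linear, and on projections it equals $SV(P)T = U(P)$; since $\varphi$ is also a bounded linear extension of $U$, uniqueness in Lemma \ref{Gleasonthm} forces $S\phi(A)T = \varphi(A)$ for all $A \in M$. For normality, note that $V = \phi|_{\mathcal{P}(M)}$ is countably additive by Theorem \ref{countablemeasuredilation}, hence completely additive by $\sigma$-finiteness, which is precisely complete additivity of the linear map $\phi$; Proposition \ref{continuity} then gives that $\phi$ is ultraweakly-wot continuous, as required. I expect the genuinely hard analytic content — the Schur-property/$\ell_{p}$ dichotomy forcing strong countable additivity on the dilation space — to be entirely absorbed into Theorem \ref{countablemeasuredilation}; the remaining obstacle here is the bookkeeping of checking that the right regular action is simultaneously bounded on the completion, multiplicative, and additivity-preserving, together with the correct transfer of countable additivity of the measure $V$ into normality of the extended map $\phi$ through Proposition \ref{continuity}.
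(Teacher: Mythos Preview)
Your proposal is correct, and it takes a genuinely different route from the paper's own argument. The paper, after obtaining the countably additive projection-valued dilation $V$ from Theorem \ref{countablemeasuredilation}, invokes the generalized Gleason extension (Lemma \ref{Gleasonthm}) abstractly to produce a bounded linear $\phi$, and then establishes the \emph{Jordan} identity $\phi(a^{2})=\phi(a)^{2}$ by hand: first for orthogonal projections (where idempotency of $V$ forces $\phi(P_{\alpha})\phi(P_{\beta})+\phi(P_{\beta})\phi(P_{\alpha})=0$), then for real spectral sums, then for self-adjoints by approximation, and finally for arbitrary finite combinations by polarization and density.

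You instead exploit the concrete structure of the elementary dilation space $\widetilde{\mathcal{M}}_{U,X}$: defining $\phi(A)$ as the right regular action $(\phi(A)\Phi)(R)=\Phi(RA)$ makes boundedness ($\|\phi(A)\|\le\|A\|$) immediate from the very definition of the dilation norm, and yields $\phi(A)\phi(B)=\phi(AB)$ by a one-line computation. This gives a full algebra homomorphism, strictly stronger than the Jordan conclusion the paper proves, and it makes the identification $\phi|_{\mathcal{P}(M)}=V$ and the dilation identity $S\phi(\cdot)T=\varphi$ transparent via Gleason uniqueness. The only mild bookkeeping, which you flag, is that $\overline{U}_{AQ,x}$ need not have $AQ\in B_{M}$, but homogeneity in the first slot handles this. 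What the paper's approach buys is that it would work for any dilation space produced by Theorem \ref{countablemeasuredilation}, without using the explicit form of the norm; what your approach buys is a shorter argument with a stronger multiplicative conclusion.
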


\begin{proof}
Since $\varphi$ is a ultraweakly-wot  bounded linear mapping from $M$ to $B(X)$ and $M$ is $\sigma$-finite, by Proposition \ref{continuity},   it reduces  to a countably additive  operator-valued  quantum  measure $U: \mathcal{P}(M)\to B(X)$.  By  Theorem \ref{countablemeasuredilation}, there exists a dilation space $Z$ and bounded operators $S: Z\to X $ and $T: X \to Z $, a countably additive quantum measure $V: \mathcal{P}(M) \to B(Z)$ such that
$$
U(P)=SV(P)T
$$
and $V$ is a  projection-valued dilation of $U$.

By the generalized Gleason theorem (see Lemma \ref{Gleasonthm}), we can extend $V$ uniquely to a bounded linear mapping $\phi$ from $M$ to $B(Z).$  Furthermore, $\phi$ is countably additive on $\mathcal{P}(M)$. Applying Proposition \ref{continuity} again, we get that $\phi$ is ultraweakly-wot continuous. It remains to show that $\phi$ is a Jordan homomorphism.

Since $V$ is projection-valued,  we know that $\phi$ is idempotent on $\mathcal{P}(M)$ (i.e. $\phi(P)^2 = \phi(P)$ for any $P\in \mathcal{P}(M)$). Let $ P_{\alpha},P_{\beta} \in \mathcal{P}(M). $  If $ P_{\alpha} \perp P_{\beta} $, then $ P_{\alpha}+P_{\beta}= P_{\alpha}\vee P_{\beta} \in \mathcal{P}(M)$. Since $\phi$ is  idempotent on $\mathcal{P}(M)$,  we get  $\phi(P_{\alpha}\vee P_{\beta})=(\phi(P_{\alpha}\vee P_{\beta}))^{2}$.
By linearity, we have
$$
\phi(P_{\alpha})+\phi(P_{\beta})=\phi(P_{\alpha})^{2}
+\phi(P_{\beta})^{2}+\phi(P_{\alpha})\phi(P_{\beta})+\phi(P_{\beta})\phi(P_{\alpha}).
$$
Thus
$$
\phi(P_{\alpha})\phi(P_{\beta})+\phi(P_{\beta})\phi(P_{\alpha})=0.
$$

Now let $x=\sum_{i=1}^{N} \lambda_{i} P_{i} \in M$, where $P_{1}, \ldots, P_{n} \in \mathcal{P}(M)$ are mutually orthogonal and $\lambda_{i} \in \mathbb{R}.$  Then $x=x^*$.  Upon computing $\phi\left(x^{2}\right)$ we see that
$$
\phi(x^{2}) =\phi\Big(\sum_{i=1}^{n} \lambda_{i}^{2} P_{i}\Big)=\sum_{i=1}^{n} \lambda_{i}^{2} \phi\left(P_{i}\right)
$$
$$
\phi(x)^{2}=\Big(\sum_{i=1}^{N} \lambda_{i}\phi(P_{i})\Big)^{2}=\sum_{i=1}^{N} \lambda_{i}^{2}\phi(P_{i})+\sum_{1\leq i<j\leq N }\lambda_{i}\lambda_{j} \left(\phi(P_{i})\phi(P_{j})+\phi(P_{j})\phi(P_{i})\right)=\phi(x^{2})
$$

Since any self-adjoint element $a \in M_{sa}$ can be approximated by a finite real linear combination of mutually orthogonal  projection, then we get  $\phi(a^{2})=\phi(a)^{2}, \forall~ a\in M_{sa}$.

For any two projections $P_{1}, P_{2}\in \mathcal{P}(M)$. Then $y = P_1 + P_2\in M_{sa}$, and so  $\phi(y^{2})=\phi(y)^{2}$, which implies that
$$
\phi(P_{1})\phi(P_{2})+\phi(P_{2})\phi(P_{1})=\phi(P_{1}P_{2}+P_{2}P_{1}).
$$
Therefore, for any $z=\sum_{j=1}^{M}\kappa_{j}P_{j}$ with  $\{\kappa_{j}\}_{1 \leq j \leq M}\subset \mathbb{C}$ and $\{ P_{j}\}_{1 \leq j \leq M}\subset \mathcal{P}(M)$, we get
$$
\begin{aligned}
\phi(z^{2})&=\sum_{j=1}^{M}\kappa_{j}^{2}\phi(P_{j})+\sum_{1\leq i< j\leq M}\kappa_{i}\kappa_{j}\phi(P_{i}P_{j}+P_{j}P_{i})\\
&=\sum_{j=1}^{M}\kappa_{j}^{2}\phi(P_{j})^{2}+\sum_{1\leq i< j\leq M}\kappa_{i}\kappa_{j}\left(\phi(P_{1})\phi(P_{2})+\phi(P_{2})\phi(P_{1})\right)\\
&=\phi(z)^{2}.
\end{aligned}
$$
Since the set of finite linear combinations of projections in $M$ is norm dense in $M$, we obtain that $\phi$ is a Jordan homomorphism.

\end{proof}

%\textbf{(delete ?)}
%*****************************
%
%Moving to the case in which $M$ is a abelian VN algebra, as any abelian von Neumann algebra is isomorphic to some algebra $L^{\infty}(K, \mu)$ where $K$ is a local compact space equipped with a positive Radon measure $\mu$ (purely atomic,  $l^{\infty}$), then $\mathcal{P}(M)$ can be identified with the structure of measurable subsets of $K$  and  of course  forms a Boolean algebra,  which correspond to the classical case. However from the quantum measure  point of view, we have that the reduced operator-valued measure has $spectral$ \cite{HLLL1}  dilation (only for classical measure space), hence the associated map is homomorphism, thus we have the following corollary coincide  with the results in our former study in  \cite{HLL}, which is viewed as an extension of the classical measure theory.
%
%\begin{corollary} Let $M$ be an ablian VN algebra and  $X$ be a Banach space, then for every ultraweak-wot continuous linear map $\phi: M\rightarrow B(X),$ there exist a Banach space $Z$, a unital ultraweak-wot continuous linear homomorphism $\pi: M \rightarrow B(Z)$, and bounded linear maps $T: X \rightarrow Z$ and $S: Z \rightarrow X$  such that
%$$
%\phi(u)=S \pi(u) T
%$$
%for all $u \in M.$ Moreover, if $M_{*}$ and $X$ are separable, then the dilated space $Z$ can be separable.
%\end{corollary}
%
%************************************

\section{Quantum measures with bounded $p$-variation}

%We first recall some definitions (e.g. variation, semivariationon) for vector measures \cite{DU}  in the classical measure space setting,  and the  $p$-variation for operator-valued measures introduced in \cite{HLL},

% we develop  $p$-variation of OVQM,   which  is equivalent to the classical case when the VN algebra $M$ is abelian. Finally,  $p$-variation norm on the elementary dilation space is introduced to help established a dilation theory for (countably additive) operator-valued quantum  measures  which admit (countable additive) projection-valued quantum measure dilation with  contractive $p$-variation.

Let $\mathcal{M}(\Sigma, X)$ be the linear space  of vector measures on $\Sigma$ with values in $X$, where $\Sigma$ is a $\sigma$-algebra of subset of a set $\Omega$. Let $\mu $ be a vector measure, then the \textit{variation} of $\mu$ can be defined  as in the scalar case, even we can define the $p$-variation of $\mu$  by
$$
|\mu|_{p}(E)=\sup\Bigg\{ \Big(\sum_{i=1}^{n}\|\mu(E_{i})\|^{p}\Big)^{1/p} :  \{ E_{1}, \dots, E_{n}\} \text{ a disjoint partition of } E \Bigg\}.
$$
The vector measure $\mu$ on $\Sigma$ is said to have bounded $p$-variation (or bounded variation for $p=1$) if $|\mu|_{p}(E)$ is finite for every $E\in \Sigma, $ or equivalent, if $|\mu|_{p}(\Omega)$ is finite.

Now let $ \varphi \in \mathcal{M}(\Sigma, B(X))$ be an operator-valued measure. For each $x\in X$,  $\varphi_{x}: \Sigma\to X$ by $\varphi_{x}(E)=\varphi(E)x $ is a vector measure. Then the $p$-variation \cite{HLL} for the operator-valued measure $\varphi$ on the classical measure space $(\Omega, \Sigma)$ is defined by
$$
|\varphi|_{p}(E)=\sup_{\|x\|\leq 1}|\varphi_{x}|_{p}(E),~\forall ~ E\in \Sigma.
$$

To define a $p$-variation for quantum measures, we need to replace $\Sigma$ by $\mathcal{P}(M)$ and a partitions of $\Omega$ by an appropriate partitions of orthogonal projections via the following orthogonal representations of finite trees.

%Inspired by the above, we consider the definition of the $p$-variation for the operator-valued quantum measure built on projection lattices $\mathcal{P}(M)$. Instead of the disjoint finite partition of sets, we take finite partition of finite tree of orthogonal projections.

%We start by reviewing the concept of the \textbf{tree} \cite{Ke}.  For an arbitrary set $A$, define  the set
%$$
%\bigcup_{n=1}^{\infty} A^{n}=\left\{(a_{1},a_{2},\dots, a_{n} )|  n \in \mathbb{N}^+ \text{ and }  a_{k} \in A,   \forall ~1\leq k\leq n \right\}
%$$
%with  partially order  by taking $\left(a_{1}, \dots, a_{n}\right) \leq \left(a^{\prime}_{1}, \dots, a^{\prime}_{p}\right)$ provided $ p \geq n $ and $ a_{k}=a_{k}^{\prime} $ for $1\leq k \leq n $.  And a  tree $\mathrm{T}$ on $A$ will be a subset of $\bigcup_{n=1}^{\infty} A^{n}$ with the property that a predecessor of a member of $\mathrm{T}$  belongs also to $\mathrm{T}$, that is,  $\left(a_{1}, \dots, a_{n} \right) \in \mathrm{T} $ whenever $\left(a_{1}, \dots, a_{n} , a_{n+1}\right) \in \mathrm{T} $.
%
%********************
%\textcolor{blue}{Are you trying to give an example here? If that is is the case, replace Specially by For example}
%
%Specially,  we take the  set of natural numbers  $\mathbb{N}$ and  we define the  finite tree $\mathrm{T}$ on $\mathbb{N}$ which is  finite and is a subset of $\bigcup_{n=1}^{\infty}\mathbb{N}^{n}$ (with partial order $ \leq $) with  certain property mentioned above.
%
%*********************

We start by reviewing the concept of the \textbf{tree}.  Taking  the  set of natural numbers  $\mathbb{N}$,  define  the set
$$
\bigcup_{n=1}^{\infty} \mathbb{N}^{n}=\left\{(a_{1},a_{2},\dots, a_{n} )|  n \in \mathbb{N}^+ \text{ and }  a_{k} \in \mathbb{N},   \forall ~1\leq k\leq n \right\}
$$
with  partially order  by taking $\left(a_{1}, \dots, a_{n}\right) \leq \left(a^{\prime}_{1}, \dots, a^{\prime}_{p}\right)$ provided $ p \geq n $ and $ a_{k}=a_{k}^{\prime} $ for $1\leq k \leq n $.  And a finite  tree $\mathrm{T}$ on $\mathbb{N}$ will be a finite subset of $\bigcup_{n=1}^{\infty} \mathbb{N}^{n}$ with the property that a predecessor of a member of $\mathrm{T}$  belongs also to $\mathrm{T}$, that is,  $\left(a_{1}, \dots, a_{n} \right) \in \mathrm{T} $ whenever $\left(a_{1}, \dots, a_{n} , a_{n+1}\right) \in \mathrm{T} $.

For a finite tree $\mathrm{T}$,  we call the elements of $\mathrm{T}$  the \textbf{nodes} of $\mathrm{T}$.
%If $t\in \mathrm{T}$, we call $t$ is a \textbf{root} if for any $s\in \mathrm{T}$, when $s \leq t$, then $s=t$;
%we call $t$ is a \textbf{terminal} if for any $s\in \mathrm{T}$, when $t \leq  s$, then $s=t$.
Let $t\in \mathrm{T}$. We say that $t$ is a \textbf{terminal} of $\mathrm{T}$ if $s=t$ whenever $s\in \mathrm{T}$ and $t\leq s$.
Moreover, we say $\mathrm{T}_{1}$ is \textbf{subtree} of  $\mathrm{T}$ if $\mathrm{T}_{1}$ is a tree and $\mathrm{T}_{1} \subset \mathrm{T}$.  We will use $\textbf{Ter}(\mathrm{T})$ to denote the set of all terminals of  $\mathrm{T}$ and  $\mathcal{T}$  denotes the set of all the finite trees.

A simple observation is that given a finite tree $\mathrm{T}$, then there exist a positive integer $L$ such that $\mathrm{T}\subset  \bigcup_{n=1}^{L}\mathbb{N}^{n}$ and $\mathrm{T}\bigcap \big(\bigcup_{n=1}^{L-M} \mathbb{N}^{n}\big)$ is subtree of $\mathrm{T}$ for all integer $1\leq M \leq L-1$. Besides, the  set of terminals  $\textbf{Ter}(\mathrm{T})$  of $\mathrm{T}$ determines the structure of the whole finite tree $\mathrm{T}$, as if a terminal $(a_{1}, a_{2} \dots a_{l})$ is fixed, then  we know that for each $ 1\leq k\leq l$,  $ (a_{1}, \dots , a_{k})$  is  the node of $\mathrm{T}$.  Of course, $\textbf{Ter}(\mathrm{T})$  is a finite set as well.

\begin{definition}\label{ORT}
Given a finite tree $\mathrm{T}$ with nodes $\{(a_{1}, a_{2},\dots, a_{k})\}$.  An \textit{orthogonal representation} of $\mathrm{T}$ (OR($\mathrm{T}$)) is a map
$$
\mathscr{P}:\mathrm{T}\to \mathcal{P}(M),\quad  (a_{1}, a_{2},\dots, a_{k}) \mapsto \mathscr{P}((a_{1}, a_{2},\dots, a_{k}))~(\text{denoted by } P_{(a_{1}, a_{2},\dots, a_{k})})
$$
such that  for any two distinct nodes $(a_{1}, a_{2},\dots, a_{k}^{i})$ and $(a_{1}, a_{2},\dots, a_{k}^{j})$ which differ only in the $k$-th coordinate,  we have
$$
P_{(a_{1}, a_{2},\dots, a_{k}^{i})  } \perp P_{(a_{1}, a_{2},\dots, a_{k}^{j})}.
$$
\end{definition}

Now we give the definition of the $p$-variation for operator-valued quantum measure.

\begin{definition}\label{p-variation}
Let $M$ be a VN algebra without type $I_{2}$ direct summand,  and  $U: \mathcal{P}(M) \rightarrow B(X)$ be a quantum measure. For every $P\in \mathcal{P}(M)$, the {\it $p$-variation } of $U$ on $P$ is defined by
\begin{align*}
|U|_{p}(P)
&=\sup_{\|x\|\leq 1}|U_{x}|_{p}(P)\\
& =\sup_{\|x\|\leq 1}  \sup_{\mathrm{T}\in \mathcal{T}, \mathscr{P}\in OR(\mathrm{T})}  \Bigg(\sum_{(a_{1},\dots, a_{l})\in \textbf{Ter}(\mathrm{T})}
\Big\|\overline{U}(P_{(a_{1},\dots, a_{l})}P_{(a_{1},\dots, a_{l-1})} \dots P_{(a_{1})} P)x \Big\|^{p} \Bigg)^{1/p},
\end{align*}
where $\overline{U}$ is the extension of $U$ and the ``sum" is over all the branches of projections like $P_{(a_{1},\dots, a_{l})}P_{(a_{1},\dots, a_{l-1})} \dots P_{(a_{1})}$ whose first coordinate indexed by  $\textbf{Ter}(\mathrm{T})$  (the set of terminals of finite tree $\mathrm{T}$) and the ``sup" is taken over all family of finite trees and their orthogonal representations.
\end{definition}

Obviously if $P_{1},  P_{2} \in \mathcal{P}(M)$ and $P_{1}\leq P_{2}$,  then  $|U|_{p}(P_{1})\leq |U|_{p}(P_{2})$. The operator-valued quantum measure $U$ is said to have bounded $p$-variation (or bounded variation for $p=1$) if $|U|_{p}(P)$ is finite for every $P\in \mathcal{P}(M)$, or equivalent, if $|U|_{p}(I)$ is finite, denoted by $\|U\|_{pV}$. By the Uniform Boundedness Principle, we know that $B(X)$-valued quantum measure $U$ has bounded $p$-variation if and only if for every $x\in X$, the vector (quantum) measure  $U_{x}$ has bounded $p$-variation.

If $M$ is an abelian VN algebra, then we can write $M=L^{\infty}(K, \nu)$,  where $(K, \nu)$ is locally compact space with Radon measure $\nu$. Thus $\mathcal{P}(M)$ can be identified with the structure of measurable subsets of $K$.  For any operator-valued quantum measure $U$, setting
$$
\mu(E)=U(P), ~\text{ if }~ P=\chi_{E}\in\mathcal{P}(M)
$$
defines an operator-valued measure on the $\sigma$-algebra of measurable subsets of $K$. Next we show that our definition of $p$-variation in this case agrees with the one defined in \cite{HLL}.

\begin{proposition} Let  $M=L^{\infty}(K, \nu)$ be an abelian VN algebra and $ U\in \mathcal{M}(\mathcal{P}(M), B(X))$ be a quantum measure. Then the $p$-variation of $U$  is the same as the $p$-variation of induced operator-valued measure $\mu \in \mathcal{M}(\Sigma, B(X))$, i.e., $|U|_{p}(P)=|\mu|_{p}(E)$ for any $P=\chi_{E}\in\mathcal{P}(M)$.
\end{proposition}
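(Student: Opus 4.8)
The plan is to translate both sides of the claimed identity into the language of measurable sets and then match the tree-branch structure of Definition \ref{p-variation} against ordinary disjoint partitions. Since $M=L^{\infty}(K,\nu)$ is abelian, each projection is a characteristic function $\chi_{F}$, the orthogonality relation $\chi_{F}\perp\chi_{F'}$ means $\nu(F\cap F')=0$, and---this is what makes the commutative case tractable---products of projections become intersections, $\chi_{F_{1}}\cdots\chi_{F_{m}}=\chi_{F_{1}\cap\cdots\cap F_{m}}$. Writing $P=\chi_{E}$ and $P_{(a_{1},\dots,a_{k})}=\chi_{F_{(a_{1},\dots,a_{k})}}$ and using $\overline{U}(\chi_{G})=U(\chi_{G})=\mu(G)$, the branch term in Definition \ref{p-variation} becomes
$$
\overline{U}\big(P_{(a_{1},\dots,a_{l})}\cdots P_{(a_{1})}P\big)x=\mu\big(G_{(a_{1},\dots,a_{l})}\big)x,\qquad G_{(a_{1},\dots,a_{l})}:=F_{(a_{1})}\cap\cdots\cap F_{(a_{1},\dots,a_{l})}\cap E .
$$

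For the inequality $|\mu|_{p}(E)\le|U|_{p}(P)$, I would realize any disjoint partition $\{E_{1},\dots,E_{n}\}$ of $E$ as the depth-one tree $\mathrm{T}=\{(1),\dots,(n)\}$, all of whose nodes are terminals, with orthogonal representation $(i)\mapsto\chi_{E_{i}}$; the pairwise disjointness of the $E_{i}$ is precisely the OR-orthogonality condition of Definition \ref{ORT} at level one, and the branch set of terminal $(i)$ is $E_{i}\cap E=E_{i}$. Hence $\big(\sum_{i}\|\mu(E_{i})x\|^{p}\big)^{1/p}$ is one of the quantities over which the supremum defining $|U_{x}|_{p}(P)$ ranges, so $|\mu_{x}|_{p}(E)\le|U_{x}|_{p}(P)$ for each $x$, and taking the supremum over $\|x\|\le1$ gives $|\mu|_{p}(E)\le|U|_{p}(P)$.

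For the reverse inequality, the key step---and what I expect to be the main obstacle---is to show that for every finite tree $\mathrm{T}$ and every orthogonal representation the branch sets $\{G_{t}:t\in\mathbf{Ter}(\mathrm{T})\}$ are \emph{pairwise disjoint} subsets of $E$. I would argue order-theoretically: two distinct terminals are incomparable in the tree order (a terminal admits no proper extension inside $\mathrm{T}$), so neither is a prefix of the other and they first differ at some coordinate $k$ not exceeding either length; the two nodes $(a_{1},\dots,a_{k-1},a_{k})$ and $(a_{1},\dots,a_{k-1},a_{k}')$ share the prefix and differ only in the $k$-th coordinate, so the OR condition forces $F_{(a_{1},\dots,a_{k})}\cap F_{(a_{1},\dots,a_{k}')}$ to be null. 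Since each branch set is contained in its corresponding level-$k$ factor, the two branch sets are disjoint. Granting this, $\{G_{t}\}$ is a disjoint family inside $E$, and appending the remainder $E\setminus\bigcup_{t}G_{t}$ produces a genuine partition of $E$; because $p\ge1$ the extra nonnegative term only enlarges the sum, whence
$$
\sum_{t\in\mathbf{Ter}(\mathrm{T})}\|\mu(G_{t})x\|^{p}\le|\mu_{x}|_{p}(E)^{p}.
$$

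Taking the supremum over all finite trees, all orthogonal representations, and all $\|x\|\le1$ then yields $|U|_{p}(P)\le|\mu|_{p}(E)$, and combining with the previous paragraph gives the stated equality. Apart from the disjointness-of-branches lemma, every step is the routine dictionary between commuting projections and measurable sets together with the monotonicity of $p$-variation under refinement; the only minor care needed is to confirm that the depth-one objects used in the first direction are legitimate finite trees, which holds since length-one sequences are trivially closed under predecessors.
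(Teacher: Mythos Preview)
Your proposal is correct and follows essentially the same approach as the paper: both translate projections to characteristic functions so that branch products become intersections, then argue that distinct terminals split at some coordinate $k$ where the OR condition forces disjointness of the branch sets, reducing the tree supremum to a supremum over finite disjoint families in $E$. The paper is terser---it does not spell out the depth-one-tree realization of an arbitrary partition nor the explicit appending of the remainder set---while you make both directions of the inequality explicit; but the substance is identical.
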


\begin{proof}  After identifying $P\in \mathcal{P}(M)$ with $\chi_{E} \ $, we get
$$
\begin{aligned}
|U|_{p}(P)
& =\sup_{\|x\|\leq 1}
\sup_{\mathrm{T}\in \mathcal{T},\mathscr{P}\in OR(\mathrm{T})} \Bigg(\sum_{(a_{1},\dots, a_{l})\in \textbf{Ter}(\mathrm{T})}
\left\|\overline{U}(P_{(a_{1},\dots, a_{l})}P_{(a_{1},\dots, a_{l-1})} \dots P_{(a_{1})} P)x \right\|^{p} \Bigg)^{1/p}\\
& =\sup_{\|x\|\leq 1}
\sup_{\mathrm{T}\in \mathcal{T},\mathscr{P}\in OR(\mathrm{T})} \Bigg(\sum_{(a_{1},\dots, a_{l})\in \textbf{Ter}(\mathrm{T})}
\left\|\overline{U}(\chi_{ E_{(a_{1},\dots, a_{l})} }\chi_{E_{(a_{1},\dots, a_{l-1})}} \dots \chi_{E_{(a_{1})}} \chi_{E})x \right\|^{p} \Bigg)^{1/p}\\
&=\sup_{\|x\|\leq 1}
\sup_{\mathrm{T}\in \mathcal{T},\mathscr{P}\in OR(\mathrm{T})} \Bigg(\sum_{(a_{1},\dots, a_{l})\in \textbf{Ter}(\mathrm{T})}
\left\|\overline{U}(\chi_{E_{(a_{1},\dots, a_{l})} \cap  E_{(a_{1},\dots, a_{l-1})} \cap  \dots \cap E_{(a_{1})}\cap E } )x \right\|^{p} \Bigg)^{1/p}\\
\end{aligned}
$$

Note that for a given finite tree $\mathrm{T}$ and  any two of  its terminals:  $(a_{1}, \dots, a_{l-1},  a_{l})$ and $(a_{1}^{\prime}, \dots, a_{m-1}^{\prime},  a_{m}^{\prime})$, we set
$$
F_{(a_{1}, \dots, a_{l-1},  a_{l})}=E_{(a_{1},\dots,a_{l-1},  a_{l})} \cap  E_{(a_{1},\dots, a_{l-1})} \cap  \dots \cap E_{(a_{1})}\cap E
$$
and
$$
F_{(a_{1}^{\prime}, \dots, a_{m-1}^{\prime},  a_{m}^{\prime})}=E_{(a_{1}^{\prime}, \dots, a_{m-1}^{\prime},  a_{m}^{\prime})} \cap  E_{(a_{1}^{\prime}, \dots, a_{m-1}^{\prime})} \cap  \dots \cap E_{(a_{1}^{\prime})}\cap E.
$$
as  two terminals are different, then they will split at $r$-th coordinate for some $1 \leq r\leq \min\{l, m\}$, to be more specific, $r$ is the smallest integer such that $ a_{r}\neq a_{r}^{\prime}$, then by the definition of orthogonal representation, we have $P_{(a_{1},a_{2}, \dots a_{r})} \perp P_{(a_{1}^{\prime}, a_{2}^{\prime}, \dots a_{r}^{\prime})}$, that is, $E_{(a^{1}, a_{2},\dots, a^{r})} \cap E_{(a_{1}^{\prime}, a_{2}^{\prime}, \dots,  a_{r}^{\prime})}=\emptyset$, meanwhile
$$
F_{(a_{1}, \dots, a_{l-1},  a_{l})}\subset E_{(a_{1}, a_{2},\dots, a^{r})}
\quad
F_{(a_{1}^{\prime}, \dots, a_{m-1}^{\prime},  a_{m}^{\prime})} \subset  E_{(a_{1}^{\prime}, a_{2}^{\prime}, \dots,  a_{r}^{\prime})}
$$
thus  $\{ F_{(a_{1},a_{2},\dots, a_{l})} \}_{(a_{1},a_{2},\dots, a_{l})\in \textbf{Ter}(\mathrm{T} ) }$ is mutually  disjoint subset of $E$.
While for each  finite tree $\mathrm{T}$ and one of its orthogonal representations corresponds to a partition of $E$.  For simplicity, we write $(a_{1}, \dots, a_{l-1},  a_{l})=\alpha$ and $\Lambda=\textbf{Ter}(\mathrm{T})$, then
$$
\begin{aligned}
|U|_{p}(P)
&=\sup_{\|x\|\leq 1}\sup_{\mathrm{T}\in \mathcal{T},\mathscr{P}\in OR(\mathrm{T})}
\Bigg(\sum_{(a_{1},\dots, a_{l})\in \textbf{Ter}(\mathrm{T})} \Big\|\overline{U}(\chi_{F_{(a_{1},a_{2},\dots, a_{l})}})x \Big\|^{p} \Bigg)^{1/p}\\
&=\sup_{\|x\|\leq 1} \sup_{\mathrm{T}\in \mathcal{T},\mathscr{P}\in OR(\mathrm{T})}
\Bigg(\sum_{(a_{1},\dots, a_{l})\in \textbf{Ter}(\mathrm{T})}
\Big\|\mu(F_{(a_{1},a_{2},\dots, a_{l})})x \Big\|^{p} \Bigg)^{1/p}\\
&=\sup_{\|x\|\leq 1} \sup \Bigg\{  \Big( \sum_{\alpha \in \Lambda } \|\mu(F_{\alpha})x\|^{p} \Big)^{1/p} : \text{  $\{ F_{\alpha}\}_{\alpha \in \Lambda}$ is a finite disjoint partition of $E$}  \Bigg\}\\
&=|\mu|_{p}(E)
\end{aligned}
$$
\end{proof}

Next we present a couple of examples with bounded $p$-variations.

\begin{example}
Recall  that an operator space is a closed space of $B(H)$  \cite{Pa,Pi}. Let  $\phi$ be a linear map between operator spaces $\mathcal{A} $ and $\mathcal{B}$.  For any $n\geq 1$, define  $\phi_{n}: M_{n}(\mathcal{A}) \to M_{n}(\mathcal{B})$ by
$$
\phi_{n}\big((a_{i,j}) \big)=\big(\phi(a_{i,j}) \big), \text{ where } a_{i,j}\in \mathcal{A}.
$$
The map $\phi$ is called completely bounded (c.b. in short) if
$
\sup_{n\geq 1}\|\phi_{n} \|_{M_{n}(\mathcal{A}) \to M_{n}(\mathcal{B})} < \infty
$
and its completely bounded norm is defined by $\| \phi \|_{cb}=\sup_{n\geq 1}\|\phi_{n}\|_{M_{n}(\mathcal{A}) \to M_{n}(\mathcal{B})}.$   We say that $\phi$  is completely positive (c.p. in short) if $\phi_{n}((a_{i,j}))\geq 0$  whenever $(a_{i,j})\geq 0$ in $ M_{n}(\mathcal{A}) $
for any $ n\geq 1$.

Let $K, H$ be Hilbert spaces and $M\subset B(K)$  be a VN algebra. Suppose $\Psi : M \to B(H)$ is a normal completely bounded map and let $\psi=\Psi|_{\mathcal{P}(M)}$. Then $\psi$ is an  operator-valued quantum measure with bounded  $2$-variation.
\end{example}

\begin{proof} By Stinespring's dilation theorem and the factorization property for c.b. maps \cite{Pa, Pi, Li},  we know that there exist a Hilbert space $\widehat{H}$, a normal $*$-homomorphism $\pi: B(K)\to B(\widehat{H})$ and linear operators $V_{1}: H\to \widehat{H}, V_{2}:  H \to \widehat{H} $ with $\|V_{1}\|\|V_{2}\|=\|\Psi\|_{cb}$  such that $\Psi(u)=V_{2}^{*}\pi(u)V_{1}$ for all $ u\in M $. Moreover, by the definition, we have
$$
\begin{aligned}
|\psi|_{2}(P)
&=\sup_{\|x\|_{H}\leq 1}\sup_{\mathrm{T}\in \mathcal{T},\mathscr{P}\in OR(\mathrm{T})}\Bigg(\sum_{(a_{1},\dots, a_{l})\in \textbf{Ter}(\mathrm{T})}\left\|\Psi(P_{(a_{1},\dots, a_{l})}P_{(a_{1},\dots, a_{l-1})} \dots P_{(a_{1})} P)x \right\|^{2} \Bigg)^{1/2}\\
&=\sup_{\|x\|_{H}\leq 1}\sup_{\mathrm{T}\in \mathcal{T},\mathscr{P}\in OR(\mathrm{T})}\Bigg(\sum_{(a_{1},\dots, a_{l})\in \textbf{Ter}(\mathrm{T})}\left\|V_{2}^{*}\pi(P_{(a_{1},\dots, a_{l})}P_{(a_{1},\dots, a_{l-1})} \dots P_{(a_{1})} P)V_{1}x \right\|^{2} \Bigg)^{1/2}\\
&\leq \|V_{2}^{*}\| \sup_{\|x\|_{H}\leq 1}\sup_{\mathrm{T}\in \mathcal{T},\mathscr{P}\in OR(\mathrm{T})}\Bigg(\sum_{(a_{1},\dots, a_{l})\in \textbf{Ter}(\mathrm{T})}\left\|\pi(P_{(a_{1},\dots, a_{l})}) \dots \pi(P_{(a_{1})}) \pi( P)V_{1}x \right\|^{2} \Bigg)^{1/2}\\
\end{aligned}
$$

Let $\mathrm{T}$ be any given a finite tree which is subset of $\bigcup_{n=1}^{L}\mathbb{N}^{n}$ and $\mathscr{P}$ be its orthogonal representation. For non-terminal nodes $(a_{1}, \dots, a_{r})$ and let $E_{(a_{1}, \dots, a_{r})}$  be the set of all nodes whose predecessor is  $(a_{1}, \dots, a_{r})$. If the nodes  $(a_{1}, \dots, a_{r}, a_{r+1}^{i}),$  $(a_{1}, \dots, a_{r}, a_{r+1}^{j})$ are  in  $E_{(a_{1}, \dots, a_{r})}$,  by the definition of orthogonal representation of $\mathrm{T}$ (OR($\mathrm{T}$)), then $P_{(a_{1}, \dots, a_{r}, a_{r+1}^{i})} \perp P_{(a_{1}, \dots, a_{r}, a_{r+1}^{j})}$. Furthermore,  since $\pi$ is a $*$-homomorphism, we have that for every $P\in \mathcal{P}(M)$, $\pi(P)$ is also a projection, and $\pi(P_{1})\perp \pi(P_{2})$ whenever $P_{1} \perp P_{2}$. Thus the elements of the set
$$
\{ \pi( P_{(a_{1}, \dots, a_{r}, a_{r+1}^{i})} ) \}_{(a_{1}, \dots, a_{r}, a_{r+1}^{i})\in E_{(a_{1}, \dots, a_{r})}}
$$
are mutually orthogonal projections in $B(\widehat{H})$, and so
$$
\begin{aligned}
 \sum_{(a_{1}, \dots, a_{r}, a_{r+1}^{i})\in E_{(a_{1}, \dots, a_{r})} }
&\left\|\pi(P_{(a_{1}, \dots, a_{r}, a_{r+1}^{i})}) \pi(P_{(a_{1}, \dots, a_{r})}) \dots \pi( P_{(a_{1})} )\pi(P)V_{1} x \right\|^{2} \\
\leq & \left\| \pi(P_{(a_{1}, \dots, a_{r})}) \dots \pi( P_{(a_{1})}) \pi(P)V_{1}x \right\|^{2}.
\end{aligned}
$$
Therefore
$$
\begin{aligned}
& \quad  \quad \sum_{(a_{1},\dots, a_{l})\in \textbf{Ter}(\mathrm{T})}  &
& \left\|  \pi(P_{(a_{1},\dots, a_{l})}) \pi(P_{(a_{1},\dots, a_{l-1})}) \dots \pi(P_{(a_{1})}) \pi(P) V_{1}x \right\|^{2}  \\
\leq & \sum_{(a_{1},\dots , a_{m}) \in \textbf{Ter}(\mathrm{T}\cap(\bigcup_{n=1}^{L-1}\mathbb{N}^{n}))}  &
& \left\|  \pi(P_{(a_{1},\dots , a_{m})}) \pi(P_{(a_{1},\dots, a_{m-1})}) \dots \pi(P_{(a_{1})}) \pi(P) V_{1}x \right\|^{2} \\
\leq &  \sum_{(a_{1},\dots , a_{r}) \in  \textbf{Ter}(\mathrm{T}\cap(\bigcup_{n=1}^{L-2}\mathbb{N}^{n}))}   &
&\left\| \pi(P_{(a_{1},\dots, a_{r})}) \pi(P_{(a_{1},\dots, a_{r-1})}) \dots \pi(P_{(a_{1})}) \pi(P) V_{1}x \right\|^{2}\\
& \quad \dots \dots  &  \\
\leq & \qquad  \sum_{(a_{1}) \in \textbf{Ter}(\mathrm{T}\cap \mathbb{N} )} &
&\left\| \pi(P_{(a_{1})}) \pi( P )V_{1}x \right\|^{2} \\
\leq {} & ~\|\pi(P)V_{1}x\|^{2}   \\
\leq {} & ~ \|V_{1}x\|^{2}.
\end{aligned}
$$
where  $\mathrm{T}\cap (\bigcup_{n=1}^{L-M}\mathbb{N}^{n})$ is a subtree of $\mathrm{T}$, and the sum is taken over all the terminals $\textbf{Ter}(\mathrm{T}\cap(\bigcup_{n=1}^{L-M}\mathbb{N}^{n}))$ of finite tree $\mathrm{T}\cap (\cup_{n=1}^{L-M}\mathbb{N}^{n})$ for $1 \leq M \leq L-1$. Finally, we obtain
$$
|\psi|_{2}(P)\leq \sup_{\|x\|_{H}\leq 1} \|V_{2}^{*}\|\|V_{1}x\|= \|V_{2}^{*}\|\|V_{1}\|=\|\Psi\|_{cb}, ~ \forall ~ P\in \mathcal{P}(M),
$$
and hence $\psi$ has bounded  $2$-variation.
\end{proof}

In addition, there is a concrete example in non-commutative $ L_{p}$-spaces \cite{PX}.

\begin{example} Let $M$ be a VN algebra, and assume that $\tau$ is a normal semi-finite faithful trace on $M$. Then the pair $(M, \tau)$ is called a \textit{noncommutative measure space }. Set $ S_{+}(M)=\{x\in M_{+}: \tau(s(x))< \infty\}, $ where the support $s(x)$ is defined as the minimal projection such that $s(x)xs(x)=x$. Let $ S(M)=\operatorname{span}\{S_{+}(M)\}$ and define $\|x\|_{p}=\tau(|x|^{p})^{1/p}, 1\leq  p < \infty$. Then the noncommutative $L^{p}$ space is defined as
$$
L^{p}(M, \tau)=\overline{S(M)^{\|\cdot\|_{p}}}
$$
 Now assume  that $M$ is a VN algebra with  normal, finite, faithful trace $\tau$,  and let the linear mapping $\Phi: M \to B(L^{p}(M, \tau))$  be given by $\Phi(x)=L_{x}, \forall x\in M$, where $L_{x}(y)=xy, ~\forall~ y\in L^{p}(M, \tau)$. Then its restriction on $\mathcal{P}(M)$, denoted by $\phi$,   is a projection-valued quantum measure with bounded $p$-variation for every $p\geq 2$.
\end{example}

\begin{proof} As non-commutative H\"{o}der inequality implies  that for any $x\in S(M),$ and $ a, b\in  M$, we have $\|axb\|_{p}\leq \|a\|\|x\|_{p}\|b\|$ for  $1 \leq p < \infty$ and thus $\|L_{x}\|\leq \|x\|$.  Then
clearly,  $\Phi$ is a bounded  unital homomorphism, and so its restriction on $\mathcal{P}(M)$ is a projection-valued quantum measure.

Moreover, it is true that if $p\geq 2$ and  $P_{1}, P_{2},\dots, P_{n}$ are mutually orthogonal projections in $S(M)$, then
\begin{equation}\label{inequ 1}
\Big(\sum_{i=1}^{n}\|P_{i}y\|_{p}^{p} \Big)^{1/p}\leq \|y\|_{p}, \quad y\in S(M)
\end{equation}
By the definition, we have
$$
\begin{aligned}
|\phi_{x}|_{p}(P)
&=\sup_{\mathrm{T}\in \mathcal{T},\mathscr{P}\in OR(\mathrm{T})}\Bigg(\sum_{(a_{1},\dots, a_{l})\in \textbf{Ter}(\mathrm{T})}
\Big\|\Phi(P_{(a_{1},\dots, a_{l})} P_{(a_{1},\dots, a_{l-1})} \dots P_{(a^{1}_{\alpha})} P)x \Big\|^{p} \Bigg)^{1/p}\\
&=\sup_{\mathrm{T}\in \mathcal{T},\mathscr{P}\in OR(\mathrm{T})}\Bigg(\sum_{(a_{1},\dots, a_{l})\in \textbf{Ter}(\mathrm{T})}
\Big\|P_{(a_{1},\dots, a_{l})} P_{(a_{1},\dots, a_{l-1})} \dots P_{(a^{1}_{\alpha})} Px \Big\|^{p} \Bigg)^{1/p}.\\
\end{aligned}
$$
Given a  finite tree $\mathrm{T} (\subset \bigcup_{n=1}^{L}\mathbb{N}^{n})$. By a similar argument, for some  non-terminal node $(a_{1}, \dots, a_{r})$, let the set $E_{(a_{1}, \dots, a_{r})}$ denote all the nodes whose predecessor is  $(a_{1}, \dots, a_{r})$. By the definition of OR($\mathrm{T}$), for any two distinct nodes $(a_{1}, \dots, a_{r}, a_{r+1}^{i}), (a_{1}, \dots, a_{r}, a_{r+1}^{j}) $ in $ E_{(a_{1}, \dots, a_{r})}$, we have
$$
P_{(a_{1}, \dots, a_{r}, a_{r+1}^{i})} \perp P_{(a_{1}, \dots, a_{r}, a_{r+1}^{j})}.
$$
that is, $\{ P_{(a_{1}, \dots, a_{r}, a_{r+1}^{i})} \}$ is the set of mutual orthogonal projections. By  applying the above inequality (\ref{inequ 1}), we have
$$
\begin{aligned}
\sum_{(a_{1}, \dots, a_{r}, a_{r+1}^{i})\in E_{(a_{1}, \dots, a_{r})}}
{} &  \left\|P_{(a_{1}, \dots, a_{r}, a_{r+1}^{i})} P_{(a_{1}, \dots, a_{r})} \dots P_{(a_{1})}Px \right\|^{p} .\\
\leq {} &  \left\| P_{(a_{1}, \dots, a_{r})} \dots P_{(a_{1})}Px \right\|^{p} .\\
\end{aligned}
$$

%&=\sup_{\mathcal{T}\times FPT} \left(\sum_{\textbf{B}_{\mathcal{T}}}\|P_{(x^{1}_{\alpha},\dots, x^{r-1}_{\beta}, x^{r}_{\gamma})}P_{(x^{1}_{\alpha},\dots, x^{r-1}_{\beta})} \dots P_{(x^{1}_{\alpha})} Px\|^{p} \right)^{1/p}.\\

Thus we have
$$
\begin{aligned}
& \quad  \quad \sum_{(a_{1},\dots, a_{l})\in \textbf{Ter}(\mathrm{T})}  &
& \left\|  P_{(a_{1},\dots,  a_{l})} P_{(a_{1},\dots, a_{l-1})} \dots P_{(a_{1})} P x \right\|^{p}  \\
\leq & \sum_{(a_{1},\dots , a_{m}) \in \textbf{Ter}(\mathrm{T}\cap(\bigcup_{n=1}^{L-1}\mathbb{N}^{n}))}  &
& \left\| P_{(a_{1},\dots,  a_{m})} P_{(a_{1},\dots, a_{m-1})} \dots P_{(a_{1})} P x \right\|^{p} \\
\leq &  \sum_{(a_{1},\dots , a_{r}) \in  \textbf{Ter}(\mathrm{T}\cap(\bigcup_{n=1}^{L-2}\mathbb{N}^{n}))}   &
&\left\| P_{(a_{1},\dots,  a_{r})} P_{(a_{1},\dots, a_{r-1})} \dots P_{(a_{1})} P x \right\|^{p}\\
& \quad \dots \dots  &  \\
\leq & \qquad  \sum_{(a_{1}) \in \textbf{Ter}(\mathrm{T}\cap \mathbb{N})} &
&\left\| P_{(a_{1})} P x \right\|^{p} \\
\leq  & ~\|Px\|^{p}   \\
\leq  & ~ \|x\|^{p}.
\end{aligned}
$$
where for any $1 \leq M \leq L-1$,  $\textbf{Ter}(\mathrm{T}\cap(\bigcup_{n=1}^{L-M}\mathbb{N}^{n}))$ denotes all the terminals of subtree $\mathrm{T}\cap(\bigcup_{n=1}^{L-M}\mathbb{N}^{n})$.
Therefore, we get
$
|\phi_{x}|_{p}(P) \leq \|x\|_{p} ~ (\forall ~ P\in \mathcal{P}(M)),
$
which  implies that  $\phi$ is a projection-valued quantum measure with bounded $p$-variation with  $\|\phi\|_{pV}\leq 1$.
\end{proof}

\begin{proposition} Let $M$ be a VN algebra without type $I_{2}$ direct summand and  $V: \mathcal{P}(M) \rightarrow B(Y)$ be an operator-valued quantum measure  with bounded $p$-variation. Assume that  $S: Y \rightarrow X$ and $T: X \rightarrow Y$ are bounded linear maps. Then the compressed quantum measure $U(\cdot)=S V(\cdot) T: \mathcal{P}(M) \rightarrow B(X)$ has bounded $p$-variation and moreover
$$
|U|_{p}(P) \leq\|S\||V|_{p}(P)\|T\|
$$
for any $P \in \mathcal{P}(M)$.
\end{proposition}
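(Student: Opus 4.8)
The plan is to reduce the claimed inequality to a purely termwise estimate, after first factoring the Gleason extension of $U$ through that of $V$. Since $V$ is a bounded quantum measure, the generalized Gleason Theorem (Lemma \ref{Gleasonthm}) gives a unique bounded linear extension $\overline{V}: M \to B(Y)$, and likewise $U$ extends uniquely to $\overline{U}: M \to B(X)$. Now the map $R \mapsto S\overline{V}(R)T$ is bounded and linear and agrees with $U$ on every projection, because $\overline{V}|_{\mathcal{P}(M)}=V$ and $U(\cdot)=SV(\cdot)T$; hence by the uniqueness clause of Lemma \ref{Gleasonthm} it must coincide with $\overline{U}$, i.e. $\overline{U}(\cdot)=S\overline{V}(\cdot)T$. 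This factorization of the extensions is the one genuinely structural step, and it is the part I would set up most carefully, since the whole argument rests on transferring it into the definition of $p$-variation.

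With this in hand, fix $x\in X$ with $\|x\|\le 1$, a finite tree $\mathrm{T}$ with an orthogonal representation $\mathscr{P}\in OR(\mathrm{T})$, and for a terminal $\alpha=(a_{1},\dots,a_{l})\in \textbf{Ter}(\mathrm{T})$ write $R_{\alpha}=P_{(a_{1},\dots,a_{l})}P_{(a_{1},\dots,a_{l-1})}\cdots P_{(a_{1})}P$ for the associated branch operator. Applying the factorization to each branch gives
$$
\big\|\overline{U}(R_{\alpha})x\big\| = \big\|S\,\overline{V}(R_{\alpha})(Tx)\big\| \le \|S\|\,\big\|\overline{V}(R_{\alpha})(Tx)\big\|,
$$
so raising to the $p$-th power and summing over $\textbf{Ter}(\mathrm{T})$ yields
$$
\sum_{\alpha\in \textbf{Ter}(\mathrm{T})}\big\|\overline{U}(R_{\alpha})x\big\|^{p} \le \|S\|^{p}\sum_{\alpha\in \textbf{Ter}(\mathrm{T})}\big\|\overline{V}(R_{\alpha})(Tx)\big\|^{p}.
$$
The point is that this is a termwise comparison, so no combinatorial analysis of the tree or its subtrees is needed here.

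To control the right-hand side, set $y=Tx$, so that $\|y\|\le \|T\|\,\|x\|\le \|T\|$. If $y=0$ the bound is trivial; otherwise, using the homogeneity of each summand in the argument vector together with the fact that $y/\|y\|$ lies in the closed unit ball of $Y$, the inner sum equals $\|y\|^{p}$ times the corresponding sum for the unit vector $y/\|y\|$, and the latter is at most $|V|_{p}(P)^{p}$ by Definition \ref{p-variation}; hence the inner sum is bounded by $\|y\|^{p}\,|V|_{p}(P)^{p}\le \|T\|^{p}\,|V|_{p}(P)^{p}$. Combining the last two displays gives
$$
\sum_{\alpha\in \textbf{Ter}(\mathrm{T})}\big\|\overline{U}(R_{\alpha})x\big\|^{p} \le \|S\|^{p}\,\|T\|^{p}\,|V|_{p}(P)^{p}.
$$
Taking $p$-th roots, then the supremum over all finite trees $\mathrm{T}$ and their orthogonal representations $\mathscr{P}\in OR(\mathrm{T})$, and finally the supremum over $\|x\|\le 1$, produces $|U|_{p}(P)\le \|S\|\,|V|_{p}(P)\,\|T\|$. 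Since $|V|_{p}(P)<\infty$ for every $P$ by hypothesis, the same holds for $|U|_{p}(P)$, so $U$ has bounded $p$-variation, completing the argument.
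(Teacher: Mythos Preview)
Your proof is correct and follows essentially the same approach as the paper: factor the Gleason extensions as $\overline{U}=S\overline{V}(\cdot)T$, apply the termwise bound $\|S\overline{V}(R_\alpha)Tx\|\le \|S\|\,\|\overline{V}(R_\alpha)Tx\|$, and then invoke the definition of $|V|_p(P)$ for the vector $Tx$. Your write-up is in fact a bit more explicit than the paper's on two points the paper glosses over (the uniqueness argument for $\overline{U}=S\overline{V}(\cdot)T$ and the homogeneity/normalization step for $y=Tx$), but there is no substantive difference in strategy.
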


\begin{proof}  Given any $P \in \mathcal{P}(M)$. Let $\overline{U},\overline{V}$ be the extensions of $U$ and $V$ respectively.  Clearly we have $\overline{U}=S \overline{V}(\cdot) T$.  By the definition, we have
$$
\begin{aligned}
|U|_{p}(P)&=\sup_{\|x\|\leq 1}|U_{x}|_{p}(P)\\
& =\sup_{\|x\|\leq 1} \sup_{\mathrm{T}\in \mathcal{T},\mathscr{P}\in OR(\mathrm{T})} \Bigg(\sum_{(a_{1},\dots, a_{l})\in \textbf{Ter}(\mathrm{T})}\left\|\overline{U}(P_{(a_{1},\dots, a_{l})}P_{(a_{1},\dots, a_{l-1})} \dots P_{(a_{1})} P)x \right\|^{p} \Bigg)^{1/p}\\
& =\sup_{\|x\|\leq 1} \sup_{\mathrm{T}\in \mathcal{T},\mathscr{P}\in OR(\mathrm{T})} \Bigg(\sum_{(a_{1},\dots, a_{l})\in \textbf{Ter}(\mathrm{T})} \left\|S\overline{V}(P_{(a_{1},\dots, a_{l})}P_{(a_{1},\dots, a_{l-1})} \dots P_{(a_{1})} P)Tx \right\|^{p} \Bigg)^{1/p}\\
&\leq \|S\|\cdot \sup_{\|x\|\leq 1} \sup_{\mathrm{T}\in \mathcal{T},\mathscr{P}\in OR(\mathrm{T})} \Bigg(\sum_{(a_{1},\dots, a_{l})\in \textbf{Ter}(\mathrm{T})} \left\|\overline{V}(P_{(a_{1},\dots, a_{l})}P_{(a_{1},\dots, a_{l-1})} \dots P_{(a_{1})} P)Tx \right\|^{p} \Bigg)^{1/p}\\
&\leq \|S\|\cdot \Big( \sup_{\|x\|\leq 1}  |V|_{p}(P)\|Tx\|\Big)\\
&= \|S\|  |V|_{p}(P)\|T\|.\\
\end{aligned}
$$
\end{proof}

Now we prove our main result of this section.

\begin{theorem} \label{dilation-mai} Let $M$ be a VN algebra without type $I_{2}$ direct summand and  $1\leq p <  \infty$. Then every  operator-valued quantum measure $U$ with bounded $p$-variation has a dilation to a  projection-valued quantum  measure $V$ with contractive $p$-variation.
\end{theorem}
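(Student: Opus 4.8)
The plan is to equip the elementary dilation space $\mathcal{M}_{U}$ with a norm modeled directly on Definition \ref{p-variation}, and to show that the projection-valued measure $V$ on its completion inherits a \emph{contractive} $p$-variation through a self-similarity (grafting) property of finite trees. For a terminal $\alpha=(a_{1},\dots,a_{l})$ of a finite tree $\mathrm{T}$ with orthogonal representation $\mathscr{P}$, write $\Pi_{\alpha}=P_{(a_{1},\dots,a_{l})}P_{(a_{1},\dots,a_{l-1})}\cdots P_{(a_{1})}$ for the branch product, and define
\[
\Big\|\sum_{i=1}^{N}C_{i}\overline{U}_{Q_{i},x_{i}}\Big\|_{pV}=\sup_{\mathrm{T}\in\mathcal{T},\,\mathscr{P}\in OR(\mathrm{T})}\Bigg(\sum_{\alpha\in\textbf{Ter}(\mathrm{T})}\Big\|\sum_{i=1}^{N}C_{i}\overline{U}(\Pi_{\alpha}Q_{i})x_{i}\Big\|_{X}^{p}\Bigg)^{1/p}.
\]
Since the inner quantity depends on $\Phi=\sum_{i}C_{i}\overline{U}_{Q_{i},x_{i}}$ only through its values $\Phi(\Pi_{\alpha})$, the expression is independent of the representation; homogeneity and the triangle inequality follow from Minkowski's inequality in $\ell_{p}$, and definiteness follows by testing against the one-node trees $\mathrm{T}=\{(1)\}$ with $P_{(1)}=P$, which force $\Phi(P)=0$ for every $P\in\mathcal{P}(M)$ and hence $\Phi=0$.

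First I would establish finiteness of $\|\cdot\|_{pV}$, which by Minkowski reduces to bounding each generator $\overline{U}_{Q,x}$ with $Q\in B_{M}$. When $Q=P_{0}$ is a projection, prepending $P_{0}$ as a new root of any tree turns each branch product $\Pi_{\alpha}P_{0}$ into a branch product of a larger admissible orthogonal representation, so $\|\overline{U}_{P_{0},x}\|_{pV}\le|U_{x}|_{p}(I)\le\|U\|_{pV}\|x\|$. For a positive contraction $R$ I would use the layer-cake representation $R=\int_{0}^{1}E_{t}\,dt$ by its spectral projections $E_{t}=\chi_{[t,1]}(R)$, together with Minkowski's integral inequality and the boundedness of $\overline{U}$, giving
\[
\Big(\sum_{\alpha}\|\overline{U}(\Pi_{\alpha}R)x\|^{p}\Big)^{1/p}\le\int_{0}^{1}\Big(\sum_{\alpha}\|\overline{U}(\Pi_{\alpha}E_{t})x\|^{p}\Big)^{1/p}dt\le|U_{x}|_{p}(I),
\]
since each $E_{t}$ is a projection. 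Decomposing an arbitrary $Q\in B_{M}$ into its real/imaginary and positive/negative parts then yields $\|\overline{U}_{Q,x}\|_{pV}\le 4\|U\|_{pV}\|x\|<\infty$. The same three estimates show $\|\cdot\|_{pV}$ is a dilation norm: $\|S_{pV}\Phi\|_{X}=\|\Phi(I)\|\le\|\Phi\|_{pV}$ from the one-node tree; $\|T_{pV}x\|_{pV}=|U_{x}|_{p}(I)\le\|U\|_{pV}\|x\|$; and the prepending argument applied to $V(P)\Phi=\Phi(\,\cdot\,P)$ gives $\|V(P)\Phi\|_{pV}\le\|\Phi\|_{pV}$, so each $V(P)$ is a contraction, which together with the idempotence and finite additivity from the Section 3 construction makes $V$ a projection-valued quantum measure on $\widetilde{\mathcal{M}}_{U,pV}$ with $U(P)=S_{pV}V(P)T_{pV}$.

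The heart of the argument is proving that $V$ has contractive $p$-variation, i.e. $|V|_{p}(I)\le1$. Here I would use that, by uniqueness of the Gleason extension, $\overline{V}(B)\Phi=\Phi(\,\cdot\,B)$ for all $B\in M$, so $\|\overline{V}(\Pi_{\alpha})\Phi\|_{pV}=\sup_{\mathrm{T}^{\alpha},\mathscr{P}^{\alpha}}\big(\sum_{\beta\in\textbf{Ter}(\mathrm{T}^{\alpha})}\|\Phi(\Pi^{\alpha}_{\beta}\Pi_{\alpha})\|^{p}\big)^{1/p}$. Given $\|\Phi\|_{pV}\le1$ and a tree $\mathrm{T}$ with representation $\mathscr{P}$, choose for each of the finitely many terminals $\alpha$ a tree $\mathrm{T}^{\alpha}$ nearly attaining this supremum, and graft $\mathrm{T}^{\alpha}$ onto $\mathrm{T}$ at $\alpha$ to form a single finite tree $\mathrm{T}^{*}$: a node $(b_{1},\dots,b_{m})\in\mathrm{T}^{\alpha}$ becomes $(a_{1},\dots,a_{l},b_{1},\dots,b_{m})$ carrying the projection $P^{\alpha}_{(b_{1},\dots,b_{m})}$. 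The orthogonality condition of Definition \ref{ORT} for $\mathscr{P}^{*}$ is inherited levelwise---from $\mathscr{P}$ where two branches diverge inside $\mathrm{T}$, and from the relevant $\mathscr{P}^{\alpha}$ where they diverge inside a grafted copy (distinct roots of $\mathrm{T}^{\alpha}$ being orthogonal being exactly the condition needed at the graft level)---so $\mathscr{P}^{*}$ is admissible, and the branch product at the grafted terminal is precisely $\Pi^{\alpha}_{\beta}\Pi_{\alpha}$. Consequently
\[
\sum_{\alpha\in\textbf{Ter}(\mathrm{T})}\sum_{\beta\in\textbf{Ter}(\mathrm{T}^{\alpha})}\|\Phi(\Pi^{\alpha}_{\beta}\Pi_{\alpha})\|^{p}=\sum_{\gamma\in\textbf{Ter}(\mathrm{T}^{*})}\|\Phi(\Pi^{*}_{\gamma})\|^{p}\le\|\Phi\|_{pV}^{p}\le1,
\]
and letting the approximations tend to the suprema gives $\sum_{\alpha}\|\overline{V}(\Pi_{\alpha})\Phi\|_{pV}^{p}\le1$; taking suprema over $\mathrm{T}$, $\mathscr{P}$ and over $\|\Phi\|_{pV}\le1$ yields $|V|_{p}(I)\le1$. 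I expect the grafting step---verifying that concatenating orthogonal representations again satisfies Definition \ref{ORT} and that the branch products multiply correctly---to be the main point requiring care; the finiteness of the norm for non-projection $Q$ is the other place where the argument is not purely formal, and is exactly what the layer-cake together with Minkowski's integral inequality is designed to handle.
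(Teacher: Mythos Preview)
Your proposal is correct and follows essentially the same route as the paper: the same $p$-variation norm on $\mathcal{M}_{U}$, the same one-node and prepending arguments for $S$, $T$, and $V(P)$, and the same tree-grafting argument to obtain $|V_{y}|_{p}(I)\le\|y\|_{pV}$. The only noteworthy difference is in the finiteness step for $\|\overline{U}_{Q,x}\|_{pV}$: the paper decomposes a positive contraction as a dyadic series $\sum_{k}P_{k}/2^{k}$ of projections and applies the discrete Minkowski inequality, whereas you use the layer-cake integral $R=\int_{0}^{1}E_{t}\,dt$ together with Minkowski's integral inequality; both yield the same bound $4\|U\|_{pV}\|x\|$ and are interchangeable here.
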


%[{\bf Rui: I decided to insert this theorem since it is slightly different from the way it was stated in Theorem 1.1 (ii)}]

%\vspace{3mm}

\begin{proof}  We first define a $p$-variation norm on the elementary dilation space. Let $\overline{U}$ be the bounded linear extension of $U$ and let $\Phi= \sum_{i=1}^{N}C_{i}\overline{U}_{Q_{i},x_{i}}\in \mathcal{M}_{U} $. Define $\|\cdot\|_{pV}: \mathcal{M}_{U} \to \mathbb{R}^{+}\cup 0$ by
\begin{align*}
\|\Phi\|_{pV}
& =\sup_{\mathrm{T}\in \mathcal{T},\mathscr{P}\in OR(\mathrm{T})}\Bigg(\sum_{(a_{1}, \dots, a_{l})\in \textbf{Ter}(\mathrm{T})}\left\|\Phi(P_{(a_{1}, \dots,  a_{l})}P_{(a_{1}, \dots, a_{l-1})}\dots P_{(a_{1})})\right\|_{X}^{p} \Bigg)^{1/p}\\
& =\sup_{\mathrm{T}\in \mathcal{T},\mathscr{P}\in OR(\mathrm{T})}\Bigg( \sum_{(a_{1}, \dots, a_{l})\in \textbf{Ter}(\mathrm{T})}\Big\|\sum_{i=1}^{N}C_{i}\overline{U}(P_{(a_{1}, \dots,  a_{l})}P_{(a_{1}, \dots, a_{l-1})}\dots P_{(a_{1})} Q_{i})x_{i}\Big\|_{X}^{p} \Bigg)^{1/p}\\
\end{align*}
the ``sum" is over the  set of  terminals  $\textbf{Ter}(\mathrm{T})$ of finite tree $\mathrm{T}$ and the ``sup" is taken over all family of finite trees $\mathrm{T} \in \mathcal{T}$ and their orthogonal representations $\mathscr{P}\in OR(\mathrm{T})$.

By definition,  the value $\|\Phi\|_{pV}$  is independent of the representations of the operator $\Phi$.  First note that
$$
\begin{aligned}
\|\Phi\|_{pV}
%&= \Big\|\sum_{i=1}^{N}C_{i}\overline{U}_{Q_{i},x_{i}}\Big\|_{pV}\\
%& =\sup_{\mathrm{T}\in \mathcal{T}} \sup_{\mathscr{P}\in OR(\mathrm{T})}\Bigg( \sum_{(a_{1}, \dots, a_{l})\in \textbf{Ter}(\mathrm{T})}\Big\|\sum_{i=1}^{N}C_{i}\overline{U}(P_{(a_{1}, \dots,  a_{l})}P_{(a_{1}, \dots, a_{l-1})}\dots P_{(a_{1})} Q_{i})x_{i}\Big\|_{X}^{p} \Bigg)^{1/p}\\
&\leq \sum_{i=1}^{N}|C_{i}|\sup_{\mathrm{T}\in \mathcal{T},\mathscr{P}\in OR(\mathrm{T})}\Bigg( \sum_{(a_{1}, \dots, a_{l})\in \textbf{Ter}(\mathrm{T})}\Big\|\overline{U}(P_{(a_{1}, \dots,  a_{l})}P_{(a_{1}, \dots, a_{l-1})}\dots P_{(a_{1})} Q_{i})x_{i}\Big\|_{X}^{p} \Bigg)^{1/p}.
\end{aligned}
$$

Recall that every norm-one element can be  expressed  as a linear combination of four positive elements with the norm at most one, and moreover, if $x\in M $ with $0\leq x\leq I$, then there is a sequence of projections  $\{P_n\}$ such that $x=\sum_{n=1}^{\infty}P_{n}/2^{n}$ \cite{Mu}. So we can write  $Q_{i}=\sum_{l=1}^{4}\textrm{i}^{l}Q_{i,l}$, where \textrm{i} denotes the complex unit and $Q_{i, l}=\sum_{k=1}^{\infty}P_{i,l,k}/2^{k}$. In what follows we will also use following Minkowski inequality: for any $a_{kj} \geq 0$,

\begin{equation}\label{inequ 2}
\Bigg(\sum_{j=1}^{m}\Big(\sum_{k=1}^{\infty} \alpha_{k j}\Big)^{p}\Bigg)^{1/p} \leq \sum_{k=1}^{\infty}\left(\sum_{j=1}^{m} \alpha_{k j}^{p}\right)^{1/p}.
\end{equation}

Now we get
$$
\begin{aligned}
&\|\Phi\|_{pV}\\
&\leq \sum_{i=1}^{N}|C_{i}|\cdot  4 \max_{1\leq l\leq 4} \Bigg( \sup_{\mathrm{T}\in \mathcal{T},\mathscr{P}\in OR(\mathrm{T})}\Bigg( \sum_{(a_{1},\dots, a_{l})\in \textbf{Ter}(\mathrm{T})}\Big\|\overline{U}(P_{(a_{1}, \dots,  a_{l})}\dots P_{(a_{1})} Q_{i,l})x_{i}\Big\|_{X}^{p} \Bigg)^{1/p} \Bigg)\\
&= \sum_{i=1}^{N}|C_{i}| \cdot 4  \max_{1\leq l\leq 4} \Bigg(\sup_{\mathrm{T}\in \mathcal{T},\mathscr{P}\in OR(\mathrm{T})}\Bigg( \sum_{(a_{1}, \dots, a_{l})\in \textbf{Ter}(\mathrm{T})}\Big\|\overline{U}\Big(P_{(a_{1}, \dots,  a_{l})}\dots P_{(a_{1})} \big(\sum_{k=1}^{\infty}\dfrac{1}{2^{k}}P_{i,l,k}\big) \Big) x_{i}\Big\|_{X}^{p}\Bigg)^{1/p} \Bigg)\\
&\leq \sum_{i=1}^{N}|C_{i}| \cdot 4  \max_{1\leq l\leq 4} \Bigg( \sup_{\mathrm{T}\in \mathcal{T},\mathscr{P}\in OR(\mathrm{T})}\Bigg( \sum_{(a_{1}, \dots, a_{l})\in \textbf{Ter}(\mathrm{T})}\Big(\sum_{k=1}^{\infty}\dfrac{1}{2^{k}}\|\overline{U}(P_{(a_{1}, \dots,  a_{l})}\dots P_{(a_{1})} P_{i,l,k})x_{i}\|_{X}\Big)^{p} \Bigg)^{1/p} \Bigg)\\
&\leq \sum_{i=1}^{N}|C_{i}| \cdot 4  \max_{1\leq l\leq 4}
\Bigg(\sup_{\mathrm{T}\in \mathcal{T},\mathscr{P}\in OR(\mathrm{T})}
\Bigg(\sum_{k=1}^{\infty}\dfrac{1}{2^{k}} \Big( \sum_{(a_{1}, \dots, a_{l})\in \textbf{Ter}(\mathrm{T})}\|\overline{U}(P_{(a_{1}, \dots,  a_{l})}\dots P_{(a_{1})} P_{i,l,k})x_{i}\|_{X}^{p} \Big)^{1/p}  \Bigg) \Bigg)\\
&\leq \sum_{i=1}^{N}|C_{i}| \cdot 4 \max_{1\leq l\leq 4} \Big( \sum_{k=1}^{\infty} \dfrac{1}{2^{k}} \|U\|_{pV}\|x_{i}\| \Big)\\
&\leq 4\|U\|_{pV}\Big( \sum_{i=1}^{N}|C_{i}|\|x_{i}\|\Big)\\
\end{aligned}
$$
Thus $\|\cdot\|_{pV}$ is finite and  well-defined.
%[{\bf Remark: It will be helpful to point out which inequality uses the face prior to this argument}]
Let  $\Psi \in \mathcal{M}_{U}$, if  $\|\Psi\|_{pV}=0$, then by  taking $\mathrm{T}=\{(0)\}$ we get  $ \sup_{ P_{(0)}\in \mathcal{P}(M)} \|\Psi(P_{(0)})\|_{X}=0$ and hence $\Psi(P)=0$ for any $P\in \mathcal{P}(M)$. By the linearity of $\Psi$ and the set of finite linear combinations of projections in $M$ is dense in $M$, we have $\Psi=0$.  The homogeneity and triangle inequality can be easily verified by some simple computations. Thus  $\|\cdot\|_{pV}$ defines a norm on $\mathcal{M}_{U}$, and we  will denote its completion by $\widetilde{\mathcal{M}}_{U,pV}$.

Recall that the linear map $S: \widetilde{\mathcal{M}}_{U, pV}\to X,$  $T:  X \to \widetilde{\mathcal{M}}_{U, pV}$ are well-defined by Equations (\ref{equS}), (\ref{equT}). Furthermore,
$$
\Big\|S\Big(\sum_{i=1}^{N}C_{i}\overline{U}_{Q_{i},x_{i}}\Big)\Big\|
=\Big\|\sum_{i=1}^{N}C_{i}\overline{U}(Q_{i})x_{i}\Big\|
\leq \Big\|\sum_{i=1}^{N}C_{i}\overline{U}_{Q_{i},x_{i}}\Big\|_{pV}
$$
and
$$
\|Tx\|=\sup_{\mathrm{T}\in \mathcal{T},\mathscr{P}\in OR(\mathrm{T})}\Bigg( \sum_{(a_{1}, \dots, a_{l})\in \textbf{Ter}(\mathrm{T})}\Big\|\overline{U}(P_{(a_{1}, \dots,  a_{l})}P_{(a_{1}, \dots, a_{l-1})}\dots P_{(a_{1})} )x\Big\|_{X}^{p}\Bigg)^{1/p}
\leq \|U\|_{pV}\|x\|.
$$
Hence $S$ and $T$ are bounded linear operators with $ \|S\|\leq 1$ and $\|T\|\leq \|U\|_{pV}.$

The mapping  $V: \mathcal{P}(M) \to B(\widetilde{\mathcal{M}}_{U, pV})$ defined by  Equation (\ref{equV}) is finitely additive and idempotent valued, and  for every $P$ in $\mathcal{P}(M)$, we have
$$
\begin{aligned}
&\Big\|V(P)\Big(\sum_{i=1}^{N}C_{i}\overline{U}_{Q_{i},x_{i}}\Big)\Big\|=\Big\|\sum_{i=1}^{N}C_{i}\overline{U}_{PQ_{i},x_{i}} \Big\|_{pV}\\
& =\sup_{\mathrm{T}\in \mathcal{T},\mathscr{P}\in OR(\mathrm{T})}\Bigg( \sum_{(a_{1}, \dots, a_{l})\in \textbf{Ter}(\mathrm{T})}\Big\|\sum_{i=1}^{N}C_{i}\overline{U}(P_{(a_{1}, \dots,  a_{l})}P_{(a_{1}, \dots, a_{l-1})}\dots P_{(a_{1})} PQ_{i})x_{i}\Big\|_{X}^{p} \Bigg)^{1/p}\\
&\leq \Big\|\sum_{i=1}^{N}C_{i}\overline{U}_{Q_{i},x_{i}}\Big\|_{pV}\\
\end{aligned}
$$
Thus $V$ is a  projection-valued quantum measure.  By Generalized Gleason Theroem, we extend $V$ uniquely to  a bounded map $\overline{V}$ form $M$ to  $B(\widetilde{\mathcal{M}}_{U, pV})$

Finally, we prove that $V$ has bounded  $p$-variation. Let $y\in \widetilde{\mathcal{M}}_{U, pV}$, then there exists a sequence of  $y_{m}\in \mathcal{M}_{U}$ converging  to $y$.
Let $y_{m}$ be represented as
$
y_{m}=\sum_{i=1}^{N_{m}}C_{i}^{m}\overline{U}_{Q_{i}^{m}, x^{m}_{i}}
$
where $N_{m}$ is positive integer related to $m$ and  $\{ C_{i}^{m} \}\subset \mathbb{C}, \{ Q_{i}^{m} \} \subset B_{M}, \{x^{m}_{i}\}\subset X.$

Given a finite tree $\mathrm{T}_{1} $ with nodes $(a_{1}, a_{2}, \dots, a_{r})$ and let $\mathscr{P}_{1}$  be an orthogonal representation  of $\mathrm{T}_{1}$. Setting $\mathscr{P}_{1}((a_{1}, a_{2}, \dots, a_{r}))=P_{(a_{1}, a_{2}, \dots, a_{r}) }$, then
$$
\begin{aligned}
&\qquad \sum_{(a_{1}, \dots, a_{l}) \in \textbf{Ter}(\mathrm{T}_{1} ) }\Big\|\overline{V}(P_{(a_{1}, \dots, a_{l})}P_{(a_{1}, \dots, a_{l-1})} \dots P_{(a_{1})})y \Big\|_{\widetilde{\mathcal{M}}_{U,pV}}^{p}\\
=&\lim_{m\to \infty}
\sum_{(a_{1}, \dots, a_{l}) \in \textbf{Ter}(\mathrm{T}_{1} ) }\Big\|\overline{V}(P_{(a_{1}, \dots, a_{l})}P_{(a_{1}, \dots, a_{l-1})} \dots P_{(a_{1})})\Big(\sum_{i=1}^{N_{m}}C_{i}^{m}\overline{U}_{Q_{i}^{m}, x^{m}_{i}}\Big) \Big\|_{\widetilde{\mathcal{M}}_{U,pV}}^{p}\\
=&\lim_{m\to \infty}
\sum_{(a_{1}, \dots, a_{l}) \in \textbf{Ter}(\mathrm{T}_{1} ) }\Big\|\sum_{i=1}^{N_{m}}C_{i}^{m}\overline{U}_{P_{(a_{1}, \dots, a_{l})}P_{(a_{1}, \dots, a_{l-1})} \dots P_{(a_{1})}Q_{i}^{m},x_{i}^{m}}) \Big\|_{\widetilde{\mathcal{M}}_{U,pV}}^{p} ,\\
\end{aligned}
$$
where sum is taken is over all the terminals in $\textbf{Ter}(\mathrm{T}_{1})$. For a specific terminal $(a_{1}, \dots, a_{l})$, we have
$$
\begin{aligned}
&\Big\|\sum_{i=1}^{N_{m}}C_{i}^{m}\overline{U}_{P_{(a_{1}, \dots, a_{l})} \dots P_{(a_{1})}Q_{i}^{m},x_{i}^{m}} \Big\|_{\widetilde{\mathcal{M}}_{U,pV}}^{p}\\
=&\sup_{\mathrm{T}\in \mathcal{T},\mathscr{P}\in OR(\mathrm{T})}\sum_{(b_{1},\dots, b_{m}) \in \textbf{Ter}(\mathrm{T})}
\Big\|\sum_{i=1}^{N}C_{i}^{m}\overline{U}(P_{(b_{1},\dots, b_{m})}  \dots P_{(b_{1})}
P_{(a_{1}, \dots, a_{l})} \dots P_{(a_{1})}Q_{i}^{m})x_{i}^{m} )\Big\|_{X}^{p}
\end{aligned}
$$
For any finite tree $\mathrm{T}$  with terminal set $\textbf{Ter}(\mathrm{T})=\{ (b_{1}, \dots, b_{m} ) \}$  and an orthogonal representation
$$
\big\{ P_{(b_{1}, \dots, b_{k} )}\big\}_{ (b_{1}, \dots, b_{m} ) \in \textbf{Ter}(\mathrm{T}), 1\leq k \leq m },
$$
we define $\mathrm{T}_{2}$  to  be  a finite tree whose terminal set is  $\{ (a_{1},\dots a_{l}, b_{1}, \dots b_{m})\}$. Then $\mathrm{T}_{1}$ is a subtree of $\mathrm{T}_{2}$. Define mapping  $\mathscr{P}_{2}: \mathrm{T}_{2} \to \mathcal{P}(M) $  as
$$
\mathscr{P}_{2}|_{\mathrm{T}_{1}}=\mathscr{P}_{1}, \text{ and }
\mathscr{P}_{2}((a_{1},\dots a_{l}, b_{1}, \dots b_{k}))=P_{(b_{1}, \dots b_{k})}.
$$
It follows that  $\mathscr{P}_{2}$ is an orthogonal representation of $\mathrm{T}_{2}$.

Thus we have
$$
\begin{aligned}
&\sum_{(a_{1}, \dots, a_{l}) \in \textbf{Ter}(\mathrm{T}_{1} ) }\Big\|\overline{V}(P_{(a_{1}, \dots, a_{l})} \dots P_{(a_{1})})y \Big\|_{\widetilde{\mathcal{M}}_{U,pV}}^{p}\\
&=\lim_{m\to \infty}\sup_{\mathrm{T}_{2}\in \mathcal{T},\mathscr{P}_{2} \in OR( \mathrm{T}_{2} ) } \sum_{(a_{1}, \dots , a_{l}, b_{1},\dots, b_{m}) \in \textbf{Ter}(\mathrm{T}_{2}),}
\Big\|\sum_{i=1}^{N_{m}}C_{i}^{m}\overline{U}(P_{(a_{1}, \dots , a_{l}, b_{1},\dots, b_{m})}, \dots , P_{(a_{1})}Q_{i}^{m})x_{i}^{m} \Big\|_{X}^{p}\\
\end{aligned}
$$
where the  ``sup"  is taken over all finite  tree $\mathrm{T}_{2}$ containing $\mathrm{T}_{1}$ as a subtree and all the  orthogonal representation $ \mathscr{P}_{2}$ of $\mathrm{T}_{2}$ with  $\mathscr{P}_{1}$  as it restriction to subtree $\mathrm{T}_{1}.$

Therefore we obtain
$$
|V_{y}|_{p}^{p}(I)\leq \lim_{m\to \infty} \Big\|\sum_{i=1}^{N_{m}}C_{i}^{m}\overline{U}_{Q_{i}^{m},x_{i}^{m}}\Big\|_{pV}^{p}
=\|y\|_{pV}^{p}
$$
and hence $V$ has contractive  $p$-variation.
\end{proof}

%[{\bf Can you explain a little bit more for above notations? What is the $\lim$ over and why using different notations $(b^{1}_{i},\dots, b^{m-1}_{j}, b^{m}_{k})-(b^{1}_{i},\dots, b^{m-1}_{j})-\dots-(b^{1}_{i})$ and  $(a^{1}_{\alpha},\dots, a^{l-1}_{\beta}, a^{l}_{\gamma})-(a^{1}_{\alpha},\dots, a^{l-1}_{\beta})-\dots-(a^{1})$ for a branch here}]

\noindent \textbf{Proof of Theorem \ref{p-dilation} (ii).}  Let  $\{P_{j}\}_{j=1}^{\infty}$ be a countable family of  mutually orthogonal projections in $\mathcal{P}(M)$ with superma $P$ and let $\sum_{i=1}^{N}C_{i}\overline{U}_{Q_{i},x_{i}}\in \mathcal{M}_{U} $. Then we have
$$
\begin{aligned}
&\Big\|V(P)\Big(\sum_{i=1}^{N} C_{i} \overline{U}_{Q_{i}, x_{i}}\Big)-\sum_{j=1}^{M} V(P_{j})\Big(\sum_{i=1}^{N} C_{i} \overline{U}_{Q_{i}, x_{i}}\Big)\Big\|_{pV}\\
=& \Big\|\sum_{i=1}^{N} C_{i}\Big(V(P)\overline{U}_{Q_{i}, x_{i}}-\sum_{j=1}^{M}V(P_{j}) \overline{U}_{Q_{i}, x_{i}}\Big) \Big\|_{pV}\\
\leq &\sum_{i=1}^{N} |C_{i}| \Big\|V(\sum_{j=M+1}^{\infty}P_{j})\overline{U}_{Q_{i}, x_{i}}\Big\|_{pV}
\end{aligned}
$$
We need to prove that
\begin{equation}\label{pcountablyadditive}
\lim_{M\to \infty}\Big\|V(\sum_{j=M+1}^{\infty}P_{j})\overline{U}_{Q_{i}, x_{i}}\Big\|_{pV}=0
\end{equation}

We need the following two facts.

\noindent \textbf{Fact 1.} If $P_{\alpha},P_{\beta}\in \mathcal{P}(M), P_{\alpha}\leq P_{\beta}$,   then  for any $\overline{U}_{Q, x}\in \mathcal{M}_{U}$,
$$
\|V(P_{\alpha})\overline{U}_{Q, x}\|_{pV}\leq \| V(P_{\beta})\overline{U}_{Q, x}\|_{pV},
$$
and thus the sequence $\left\|V(\sum_{j=M+1}^{\infty}P_{j})\overline{U}_{P_{i}, x_{i}}\right\|_{pV}$ is decreasing.

\noindent \textbf{Fact 2.}  If $P_{1},  P_{2} \in \mathcal{P}(M)$ and $P_{1}\perp P_{2}=0$,  then  $|V_{y}|^{p}(P_{1})+ |V_{y}|^{p}(P_{2})\leq |V_{y}|^{p}(P_{1}+P_{2})$.

Indeed, let $\mathrm{T}_{1} ( \subset  \bigcup_{n=1}^{L_{1}} \mathbb{N}^{n} )$  be a  finite tree with the node set $\{(a_{1}, a_{2},\dots, a_{r})\}$ and an orthogonal representation $\{ P_{(a_{1}, a_{2},\dots, a_{r})}\}$, and  $\mathrm{T}_{2} (\subset \bigcup_{n=1}^{M_{1}} \mathbb{N}^{n}) $ be another finite tree with nodes $\{ (b_{1}, b_{2},\dots, b_{k} )\}$ and an orthogonal representation $\{ P_{(b_{1}, b_{2},\dots, b_{k}))}\}$. Define the set
$$
\mathrm{T}\subset \bigcup_{n=1}^{\max\{L_{1}, M_{1}\}+1} \mathbb{N}^{n}
$$
whose elements are in the set  $\{ (0), (1), (0, a_{1}, a_{2},\dots, a_{r}), (1,b_{1}, b_{2},\dots, b_{k} )\}$. Then $\mathrm{T}$ is  a finite tree, and if we define $\mathscr{P}: \mathrm{T} \to \mathcal{P}(M)$ by
$$
\begin{aligned}
\mathscr{P}((0)) &=P_{1}, &  \mathscr{P}((0, a_{1}, a_{2},\dots, a_{r}))&=P_{(a_{1}, a_{2},\dots, a_{r})} \\
\mathscr{P}((1)) & =P_{2},   &  \mathscr{P}((1,b_{1}, b_{2},\dots, b_{k} )) & = P_{(b_{1}, b_{2},\dots, b_{k} )}
\end{aligned}
$$
which is an orthogonal representation of $\mathrm{T}$.  For convenience, we write $\mathscr{P}((c_{1},\dots, c_{i}))=P_{(c_{1},\dots, c_{i})}$ whenever  $(c_{1},\dots c_{i}) \in \mathrm{T}$.

Thus we get
$$
\begin{aligned}
&\sum_{(a_{1}, \dots, a_{l}) \in \textbf{Ter}(\mathrm{T}_{1})}
\left\| \overline{V}(P_{(a_{1}, \dots, a_{l})}\dots P_{(a_{1})} P_{1})y\right\|^{p}+\sum_{(b_{1}, \dots, b_{m}) \in \textbf{Ter}(\mathrm{T}_{2})}\left\| \overline{V}(P_{(b_{1}, \dots, b_{m})}\dots P_{( b_{1} )}P_{2})y \right\|^{p} \\
=&\sum_{(c_{1}, \dots, c_{n}) \in \textbf{Ter}(\mathrm{T})} \left\| \overline{V}( P_{(c_{1}, \dots, c_{n}) } \dots P_{(c_{1})} (P_{1}+P_{2}))y \right\|^{p},
\end{aligned}
$$
Therefore, by the definition of $p$-variation, we get $|V_{y}|^{p}(P_{1})+ |V_{y}|^{p}(P_{2})\leq |V_{y}|^{p}(P_{1}+P_{2})$.

Now we continue our proof.  Assume to the contrary that  (\ref{pcountablyadditive}) does not holds.  Then we can find some $\delta > 0$ and a sequence of $n_{1}\leq m_{1} < n_{2}\leq  m_{2}< n_{3}\leq m_{3} \dots $ such that
\begin{equation}\label{inequ 3}
\Big\|V(\sum_{j=n_{k}}^{m_{k}} P_{j})\overline{U}_{Q_{i},x_{i}}\Big\|_{pV}\geq \delta, ~\forall k \in \mathbb{N}
\end{equation}

Let  $P_{k}^{\prime}=\sum_{j=n_{k}}^{m_{k}} P_{j}$. Then $\{ P_{k}^{\prime}\}_{k=1}^{\infty}$ is a family mutually orthogonal projections. Define $P^{\prime}=\sum_{k=1}^{\infty}P_{k}^{\prime}$. Then for any given positive integer $K$,
$\sum_{k=1}^{K}P_{k}^{\prime}\leq P^{\prime}$. Moreover,  according to our basic facts,  we have
$$
\sum_{k=1}^{K}\left\|V(P_{k}^{\prime})\overline{U}_{Q_{i},x_{i}}\right\|_{pV}^{p} \leq \Big\|V(\sum_{k=1}^{K}P_{k}^{\prime})\overline{U}_{Q_{i},x_{i}}\Big\|_{pV}^{p}\leq  \left\|V(P^{\prime})\overline{U}_{Q_{i},x_{i}}\right\|_{pV}^{p}.
$$

On the other hand, by the boundedness of $V$, we have
$$
\left\|V(P^{\prime})\overline{U}_{Q_{i},x_{i}}\right\|_{pV}^{p}\leq \left\|\overline{U}_{Q_{i},x_{i}}\right\|_{pV}^{p} \leq 4\|U\|_{pV}\|x_{i}\|.
$$
Combining this with  inequality (\ref{inequ 3}) we get
$$
4\|U\|_{pV}\|x_{i}\| \geq K \delta^{p}, ~\forall K \in \mathbb{N}^{+}
$$
which leads to a contradiction. Therefore $V$ is countably additive on the dense subspace $\mathcal{M}_{U}$ of $\widetilde{\mathcal{M}}_{U, pV}$, and so  $V$ is countably additive on $\widetilde{\mathcal{M}}_{U, pV}$ by approximation technique similar to the  Equation (\ref{strongadditive}) in Theorem \ref{countablemeasuredilation}.

\section*{Acknowledgments} The authors would like to express appreciation to  Guixiang Hong, Chi-Keung Ng and Quanhua Xu for many helpful comments and suggestions.

\end{document}